\documentclass[12pt,centertags,oneside]{amsart}
\RequirePackage[utf8]{inputenc}
\usepackage{amstext,amsthm,amscd,typearea}
\usepackage{amsmath}
\usepackage{amssymb}
\usepackage{a4wide}
\usepackage{enumitem}
\usepackage[mathscr]{eucal}
\usepackage{mathrsfs}
\usepackage{comment}
\usepackage{pdfsync}
\usepackage{pdflscape}
\usepackage{appendix}
\usepackage{multicol}
\usepackage{tikz}
\usepackage{color}
\usepackage{charter}
\usepackage{color}
\usepackage{xcolor}
\def\blue{\color{blue}}

\usepackage[colorlinks=true,  
            linkcolor=cyan,   
            citecolor=blue,   
            urlcolor=cyan       
            ]{hyperref}

\usepackage[a4paper,width=16.2cm,top=3cm,bottom=3cm]{geometry}

\usetikzlibrary{calc}

\newtheorem{theorem}{Theorem}[section]
\newtheorem{definition}[theorem]{Definition}
\newtheorem{proposition}[theorem]{Proposition}
\newtheorem{corollary}[theorem]{Corollary}
\newtheorem{lemma}[theorem]{Lemma}
\newtheorem{remark}[theorem]{Remark}

\newcommand{\volume}{{\rm vol}}
\newcommand{\lov}{{\rm lov}}
\newcommand{\supp}{{\rm supp}}
\newcommand{\dist}{\mathop{\mathrm{dist}}\nolimits}

\newcommand{\C}{\mathbb{C}}

\newcommand{\R}{\mathbb{R}}
\renewcommand\P{\mathbb{P}}

\title{On the support of measures of large entropy for
automorphisms of
K\"ahler manifolds}

\author[F.~Bianchi]{Fabrizio Bianchi}
\address{Dipartimento di Matematica, Università di Pisa, Largo Bruno Pontecorvo 5, 56127 Pisa, Italy}
  \email{fabrizio.bianchi$@$unipi.it}

\author[S.~Boymurodov]{Sobir Boymurodov}
   \address{V.I.Romanovskiy Institute of Mathematics, Uzbekistan Academy of Sciences, 9, Universitet str., 100174, Tashkent, Uzbekistan}
\email{sboymurodov.research$@$gmail.com}
\author[K.~Rakhimov]{Karim Rakhimov}

\address{V.I.Romanovskiy Institute of Mathematics, Uzbekistan Academy of Sciences, 9, Universitet str., 100174, Tashkent, Uzbekistan}
\email{karimjon1705$@$gmail.com }

\begin{document}

\begin{abstract}
Let $f$ be a holomorphic automorphism of a compact K\"ahler manifold
$X$
with simple action on cohomology.
We show  that every ergodic measure with sufficiently large entropy is supported on the Julia set of $f$.
In particular, when $X$ is a
 surface, any ergodic
 measure 
 with positive entropy is supported on the Julia set.
The proof relies on quantitative estimates for the speed of convergence towards the Green currents of $f$, with respect to a suitable norm on an adapted functional space of non-necessarily closed currents.
\end{abstract} 

\keywords{Automorphisms of K\"ahler manifolds, Julia sets, Green currents, entropy}

\noindent

\maketitle
{\textbf{AMS classification:}}
32H50, 32U40, 37A35, 37F80
\section{Introduction}

The study of the dynamics of holomorphic automorphisms
on compact K\"ahler manifolds
is a central topic in complex dynamics. We refer
to 
\cite{BK09,FilipTosatti24,McM06,McM07,Oguiso09} for interesting examples,
 to
\cite{ Dinh05JGA, DS10JAG,DS05, DTD12}
for the basic properties of these maps,
and to \cite{Cantat01, Cantat14, DH22,FT21,Oguiso14} for more recent developments.
Let $(X,\omega)$ be a compact K\"ahler manifold of dimension $k$, and 
$f$
a holomorphic automorphism of $X$. For 
every 
integer $0\leq q\leq k$, the $q$-th dynamical degree of $f$ is defined as the spectral radius of the pull-back operator 
$$
f^*: H^{q,q}(X,\mathbb{C}) \to H^{q,q}(X,\mathbb{C}).
$$
We denote it by $d_q(f)$, or simply $d_q$ when the context is clear. It follows that $d_0 = d_k = 1$, and for each $n \geq 1$ we have
$d_q(f^n) = (d_q)^n$,
where $f^n = f \circ \cdots \circ f$ ($n$-times) is the $n$-th iterate of $f$.  By Poincar\'e duality, 
we have 
$d_q(f)= d_{k-q}(f^{-1})$ for every $q$. 
It follows from
the fundamental
Gromov-Khovanskii-Teissier inequalities
 \cite{Gromov90,Khovanskii,Teissier} that the sequence
${q\mapsto\log{d_q}}$ is concave. So, there are integers $0\le p\le p'\le k$ such that
$$1=d_0<\dots<d_p=\dots=d_{p'}>\dots>d_k=1.$$
 Throughout this paper
 we shall assume that $f$ has a \emph{simple action on cohomology}. 
 This means that  $p=p'$ and 
 that the action of $f^*$ on $H^{p,p}(X,\mathbb{R})$ 
 has a single eigenvalue of maximal modulus, denoted by $d_p$, which we call the \emph{main dynamical degree} of $f$.  We
define $\delta_f$ to be the maximum between  
$\max\{d_{p-1},d_{p+1}\}$
 and the absolute values of
  all the eigenvalues different from $d_p$
  for the action of $f^*$
 on $H^{p,p}(X,\mathbb{R})$. Hence, simple 
 action means 
 $\delta_f<d_p$.

Automorphisms with simple action have been especially studied  in \cite{BD,DS10,VW26} 
as this condition can be seen as a cohomological version of hyperbolicity.
  By \cite{DS05},
  there exist unique positive closed currents $T^+$ and $T^-$,
  of bidegrees
$(p,p)$ and $(k-p,k-p)$,
   that are invariant under $d_p^{-1}f^*$ and $d_p^{-1} f_*$, respectively, that is,
 that satisfy
\[
f^*(T^+) = d_p T^+, \qquad
\text{and}
\qquad f_*(T^-) = d_p T^-.
\]
The currents $T^+$ and $T^-$ are called the \emph{Green currents} of $f$ and $f^{-1}$, respectively. They can be obtained as the limit currents of
the sequences 
$d_p^{-n}(f^n)^*(\omega^{ p})$ and $d_p^{-n}(f^n)_*(\omega^{k-p})$, respectively. We denote by $J^\pm$ the support of $T^\pm$ and define the 
\emph{Julia set} of $f$ 
as 
$J:=J^+\cap J^-$. 

The Green currents $T^\pm$ have H\"older continuous super-potentials, and their intersection
$T^+ \wedge T^- := \mu$
is well-defined and is referred to as the \emph{Green measure}, which is totally invariant under both $f^*$ and $f_*$. The Green measure
$\mu$
is the unique measure of  maximal entropy $\log d_p$ and satisfies strong ergodic and statistical properties, see for instance
\cite{BD,DS10,DS10JAG}.
By
definition, 
we have $\supp\, \mu \subseteq J$.
The goal of this paper is to
prove the same inclusion for every ergodic measure of sufficiently large entropy.
We refer for instance 
to \cite[Example~7.3]{Cantat14}
 for examples where $J\neq X$.

\begin{theorem}\label{thm: dimX=k, p}
    Let  $(X,\omega)$ be   a compact K\"ahler manifold 
    and 
    let $f:X\to X$ be a holomorphic automorphism with simple action on cohomology.
    Then every ergodic measure $\nu$ with
    entropy $h_{\nu}(f)>\log\delta_f$ is supported on the
    Julia set $J$.
\end{theorem}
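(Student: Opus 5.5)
Assume $\nu$ is ergodic with $h_\nu(f)>\log\delta_f$ and $\supp\,\nu\not\subseteq J$. By ergodicity, $\nu$ then gives full measure to $X\setminus J^+$ or to $X\setminus J^-$; by symmetry (replace $f$ by $f^{-1}$, which exchanges $d_p$ with $d_{k-p}(f^{-1})$ and has the same $\delta$) we may take $\nu(X\setminus J^+)=1$. Choose a small open ball $U\Subset X\setminus J^+$ with $\nu(U)>0$. The plan is to show that orbits through $U$ grow in volume no faster than $\delta_f^n$, and to reach a contradiction through a volume-growth bound on entropy of Yomdin/Gromov type, in the spirit of the local entropy estimates of Dinh--Sibony for $(p,p)$-currents.

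\textbf{Step 1 (quantitative convergence).} On $U$, $T^+=0$. We establish a rate for $d_p^{-n}(f^n)^*\omega^p\to T^+$ on a space of not necessarily closed $(p,p)$-currents carrying a dual norm $\|\cdot\|_*$, tested against smooth forms with bounded $C^2$ (or $dd^c$) norms. Decompose $\omega^p$ in $H^{p,p}$ as $c[T^+]+\sum(\text{other eigen-directions})+dd^c$-exact terms. Pulling back then gives
$$(f^n)^*\omega^p=c\,d_p^nT^++O(\delta_f^{\,n}n^{k})$$
in $\|\cdot\|_*$. Here the contribution of the non-dominant eigenvalues is handled by linear algebra. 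The $dd^c$-part is controlled by super-potentials, using $d_{p\pm1}\le\delta_f$ for mass growth of $(p\mp1,p\mp1)$-type terms. Testing against a cutoff $\chi$ supported near $U$ yields
$$\int_U (f^n)^*\omega^p\wedge\omega^{k-p}\lesssim \delta_f^{\,n}n^{k}.$$
The same argument should apply to $(f^n)^*\omega^q$ for every $q$: for $q\ne p$ the global mass is already $\lesssim\delta_f^{\,n}$, since by concavity $d_q\le\max(d_{p-1},d_{p+1})$.

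\textbf{Step 2 (entropy bound).} Apply the Gromov/Dinh--Sibony estimate that local entropy on a set is bounded by the growth of the volumes of the graphs $\Gamma_n$ of $(f,\dots,f^{n-1})$ over that set. The volume of $\Gamma_n$ over $U$ is a sum of mixed terms
$$\int_U (f^{n_1})^*\omega\wedge\cdots\wedge(f^{n_k})^*\omega,$$
and each must be bounded by $\delta_f^{\,n}e^{o(n)}$ using Step 1. By the variational principle restricted to $\nu$, Katok's entropy formula or Brin--Katok then gives $h_\nu(f)\le\log\delta_f$ for any ergodic $\nu$ charging $U$, which is the desired contradiction.

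\textbf{Main obstacle.} The mixed products with distinct times $n_i$ do not reduce to a single pullback. The natural attempt is to write them as pullbacks under $f^{n_1}$ of products involving $(f^{n_i-n_1})^*\omega$, then bound inductively using that $U$ is contained in an invariant region where $T^+$ vanishes. That region, $X\setminus J^+$, is only backward/forward invariant in the appropriate sense, so Step 1 must hold uniformly on $f^{-m}(U)$, or on a fixed neighbourhood of $\overline U$ in $X\setminus J^+$. This is exactly why Step 1 needs a norm on non-closed currents: cutting off by $\chi$ destroys closedness. It is also where the $e^{o(n)}$ losses must be tracked carefully.
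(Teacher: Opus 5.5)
\textbf{Verdict: there is a genuine gap.} Your outer frame is sound and matches the paper's. You use total invariance of $J^+$ plus ergodicity, and symmetry under $f^{-1}$ (with $\delta_{f^{-1}}=\delta_f$). You then apply Gromov's bound $h_t(f,\overline U)\le \lov(f,W)$, which reduces everything to bounding each mixed integral $\int_W\bigwedge_{i=1}^k(f^{m_i})^*\omega$ by $\delta^m e^{\epsilon m}$, uniformly in $0\le m_k\le\dots\le m_1\le m-1$, for every $\delta>\delta_f$ and $\epsilon>0$.

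Nothing in your proposal proves that bound. Step~1 only controls the single-time pairing of $(f^n)^*\omega^p$ with the \emph{fixed} smooth form $\chi\omega^{k-p}$. This says nothing about products with distinct times. There is also no inequality bounding a localized mixed product by unmixed ones: Khovanskii--Teissier and Alexandrov--Fenchel go the wrong way, and pointwise $i dz_1\wedge d\bar z_1$ and $i dz_2\wedge d\bar z_2$ have vanishing squares but positive product. Your remark that $d_q\le\max(d_{p-1},d_{p+1})$ for $q\neq p$ only handles terms with fewer than $p$ large times. Once $p$ or more factors carry large distinct times, the global mass is of order $d_p^{m}$, and localization off $J^+$ must be exploited uniformly in all the $m_i$. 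Your ``Main obstacle'' paragraph names this difficulty, but it is the whole content of the theorem, and Step~2 has nothing to feed on without it.

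\textbf{What is missing.} First, split the product at index $p$:
\[
S:=\bigwedge_{i=1}^p(f^{m_i-m_p})^*\omega,
\qquad
R:=(f^{m_{p+1}})_*(\chi)\wedge\bigwedge_{i=p+1}^k(f^{m_{p+1}-m_i})_*\omega .
\]
Here $S$ is positive closed with $\|S\|\lesssim\delta^{m_1-m_p}$, since only $p-1$ factors are nontrivial. The integral is then at most $\langle (f^{m_p-m_{p+1}})^*S,R\rangle$.

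Second, push the cutoff forward rather than seeking uniformity on $f^{-m}(U)$. This keeps $\supp R$ off $J^+$ by invariance, so $\langle T^+,R\rangle=0$. A convergence-speed estimate then gives $\delta^{m_p-m_{p+1}}\|S\|_*$ times a regularity norm of $R$. Note that the non-closedness lives in the test current $R$, not in the pulled-back current.

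Third, control the regularity of $R$, which depends on the $m_i$. You need three bounds:
\begin{itemize}
\item $\|R\|\lesssim\delta^{m_{p+1}-m_k}$;
\item $\|\mathscr U_{dd^cR}\|_{\gamma,*}\lesssim A^{\gamma m}\delta^{m_{p+1}}$, obtained by interpolating the $\mathcal C^2$ growth $A^n$ of $f_*$ with the mass growth in bidegrees above $p$;
\item a bound $A^{\gamma m}\delta^{m_{p+1}}$ for $dd^cR$ in a norm that allows decompositions into \emph{non-closed} positive currents (the paper's $\|\cdot\|^{\bigstar}$).
\end{itemize}
The last bound comes from writing $dd^c\chi=\beta_1^+-\beta_1^-$ with $\beta_1^\pm\ge0$ supported off $J^+$, iterating this $k-p$ times, and applying the convergence speed recursively at each level. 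That in turn requires re-proving the Dinh--Sibony rate with $\|dd^cR\|^{\bigstar}$ in place of $\|dd^cR\|_*$, via $\bigstar$-versions of the Skoda-type estimate and of $SP$-uniform convergence.

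Finally, the loss is genuinely exponential, $A^{\gamma m}$, not $e^{o(m)}$. One gets $h_t(f,K)\le\log\delta+\gamma\log A$ and concludes by letting $\gamma\to0$ and $\delta\to\delta_f$.
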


Let now $X$ be a compact K\"ahler
surface and $f$ an automorphism of positive entropy. By
\cite{Gromov03, Yomdin}, 
the topological entropy
of $f$
is equal to $\log d_1$, see also \cite{DS04}. 
In particular, we have $d_1>1$. 
By \cite{Cantat01},
all the eigenvalues of the action of
$f^*$ on $H^{1,1}(X,\mathbb R)$
have
modulus $1$,
except for two eigenvalues $d_1$ and $1/d_1$, which have multiplicity $1$. 
In particular, we have $\delta_f=1$ in this case.
Hence, 
every automorphism of positive entropy of a K\"ahler surface has simple action on cohomology.

\begin{corollary}\label{Main_Theorem}
   Let $(X,\omega)$ be a  compact K\"ahler surface  and  let $f\colon X\to X$ be a holomorphic  automorphism 
   with positive entropy.  Then every  ergodic measure $\nu$  with positive entropy
   is supported on $J$.
\end{corollary}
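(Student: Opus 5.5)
The corollary should follow directly from Theorem~\ref{thm: dimX=k, p}, once we check that its hypotheses hold and identify $\delta_f$ for surfaces. The plan has three steps.

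\textbf{Step 1: the relevant degree is $p=1$.} Here $k=2$, so $d_0=d_2=1$. Since $f$ has positive entropy, the Gromov--Yomdin theorem \cite{Gromov03,Yomdin} gives $h_{\mathrm{top}}(f)=\log d_1>0$, hence $d_1>1$. The concave sequence $q\mapsto \log d_q$ on $\{0,1,2\}$ therefore attains its maximum only at $q=1$. This gives $p=p'=1$.

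\textbf{Step 2: simple action and $\delta_f=1$.} We use Cantat's description \cite{Cantat01} of the spectrum of $f^*$ on $H^{1,1}(X,\mathbb{R})$. The eigenvalues $d_1$ and $1/d_1$ are simple, and all other eigenvalues have modulus $1$. Hence $d_1$ is the unique eigenvalue of maximal modulus, and the action is simple. Moreover $\max\{d_0,d_2\}=1$ and every other eigenvalue has modulus at most $1$. So $\delta_f=1<d_1$.

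If one wants to avoid quoting Cantat in full, the key point is that $f^*$ preserves the intersection form, which has signature $(1,h^{1,1}-1)$ by the Hodge index theorem. An isometry of such a Lorentzian form has at most one eigenvalue of modulus $>1$; it is real, simple, and paired with its inverse. All remaining eigenvalues lie on the unit circle. Alternatively one can use the bound $\delta_f\le 1$ from that structure, which suffices.

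\textbf{Step 3: apply the theorem.} Any ergodic $\nu$ with $h_\nu(f)>0=\log\delta_f$ satisfies the hypothesis of Theorem~\ref{thm: dimX=k, p}. Hence $\supp\,\nu\subseteq J$.

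The only step with content is the identification $\delta_f=1$. That identification is exactly Cantat's spectral result already recalled in the text, so there is no real obstacle beyond citing it correctly. One also needs the spectral gap to be strict, i.e.\ that no other eigenvalue has modulus $d_1$, which the simplicity of $d_1$ together with the unit-modulus statement guarantees.
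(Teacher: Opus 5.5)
Your proposal is correct and matches the paper's own argument: Gromov--Yomdin gives $d_1>1$ (so $p=p'=1$), Cantat's description of the spectrum of $f^*$ on $H^{1,1}(X,\mathbb{R})$ gives simple action with $\delta_f=1$, and Theorem~\ref{thm: dimX=k, p} then applies to every ergodic $\nu$ with $h_\nu(f)>0$. Your alternative justification via the Hodge index theorem, using that $f^*$ is an isometry of a form of signature $(1,h^{1,1}-1)$, is a valid self-contained substitute for quoting Cantat in full.
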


Parallel results
to Theorem \ref{thm: dimX=k, p} 
have been obtained in the context of holomorphic endomorphisms of 
complex projective spaces
{\blue\cite{deT, Dinh-attracting-current}}
and more recently
for polynomial-like maps on $\mathbb{C}^k$
\cite{BBR}. 
Observe that, in these cases,
the maximal dynamical degree is
the topological degree
$d_k$,
hence the problem is simpler as only expanding directions are present.

A key point of the proof
in the case of endomorphisms of projective spaces
in \cite{deT,Dinh-attracting-current}
is that the $(1,1)$-Green current $T$ and the Fubini--Study form lie in the same cohomology class. This  method cannot be applied in our setting
as the
Green currents $T^{\pm}$
are not necessarily cohomologous to
K\"ahler forms
(it can, on the other hand, be applied to
the so-called H\'enon-Sibony maps, or regular automorphisms of $\mathbb P^k$ \cite{Dinh05, Sibony}, see
\cite{Boy26}).
As cohomology tools are also not available in the (non-algebraic) setting of polynomial-like maps,
the proof in that case given 
in \cite{BBR}
relied on a different idea, based on the
\emph{exponential
speed of convergence}
of preimages of points  towards
the Green measure \cite{DS03, DS5}.

The proof of Theorem \ref{thm: dimX=k, p} will be closer in spirit to that of \cite{BBR}. 
However, instead of considering preimages of points
(which -- since $f$ is an automorphism
-- cannot 
equidistribute),
we 
rely on
an exponential
speed of convergence
for the pull-backs of suitable 
\emph{currents}
towards the Green currents
$T^+$ and $T^-$.
Such a speed has been established in \cite{DS10},
see also \cite{Vergamini25} for a generalization of that result where the speed is measured 
against
a more general class of observables.
A crucial point in both 
works above is that one can use \emph{closed} positive currents. In our setting, since we will need to work on a given set (the complement of $J$) we will be naturally forced to work with \emph{non-closed}
currents. In particular, we will introduce a new norm
for currents and prove a generalization 
of 
the convergence speed in
\cite{DS10} 
with respect to observables bounded in this more flexible  norm, see Theorem \ref{thm:new-speed}.
We observe that this difficulty
is specific (and intrinsic)
 when the dimension  $k$
of $X$ is at least $3$. In the special case 
$k=2$,
we
would not need to introduce a new norm
(as, for dimensional reasons, the currents we use would be automatically closed), 
and we could apply the convergence speed of \cite{DS10}, see Remark \ref{r:dim2} 
 and Section \ref{ss:role-star} for more details.
It would also be interesting to understand whether analogous results hold for
non-invertible maps, holomorphic correspondences, or in non-algebraic settings such as
H\'enon-like maps. We briefly discuss these questions in Section~\ref{s:final}.

\medskip

{\bf Outline of the paper.}
In Section~\ref{section:2}, we recall
the 
notions concerning currents,
super-potentials, and 
convergence towards
Green currents
that we will need in the sequel. 
We also introduce and study our new norm $\|\cdot\|^\bigstar$ for currents.
In Section \ref{s:action}
we 
state 
and prove
our exponential 
convergence result towards Green currents.
In Section~\ref{sec:decay_estimates}
we 
give the main dynamical estimates that
permit
to control the action of $f^*$ outside the Julia set $J$.
The proof
 of Theorem \ref{thm: dimX=k, p} is then completed in Section \ref{s:proof-thm}. 
 We collect in Section \ref{s:final}
 some final remarks and related open questions.
In the Appendix, we give a complete
proof
of two technical results which are needed in the proof
 of Theorem~\ref{thm:new-speed}.

\medskip

{\bf Acknowledgments.}
 The first author would like to thank the
National University of Uzbekistan and the 
V.I. Romanovskiy Institute of Mathematics of Uzbek Academy of Sciences
for the warm welcome and the excellent work conditions.
The authors would also like to thank Marco Vergamini for useful discussions and helpful comments.
This project has received funding from
 the French government through the Programme
 Investissement d'Avenir
(ANR QuaSiDy /ANR-21-CE40-0016,
ANR PADAWAN /ANR-21-CE40-0012-01,
ANR TIGerS/ANR-24-CE40-3604)
and from the MIUR Excellence Department Project awarded to the Department of Mathematics, University of Pisa, CUP I57G22000700001.
The first
author is affiliated to the GNSAGA group of INdAM.

\section{Preliminaries}\label{section:2}
In this section, we recall some basic properties of positive closed currents on a compact K\"ahler manifold that will be used throughout the article. We also introduce 
the
dynamically adapted norm  $\|\cdot\|^\bigstar$,
that plays a central role in our approach. 

\subsection{Currents on K\"ahler manifold}
Let $(X,\omega)$ be a compact K\"ahler manifold 
with $\dim X=k$
and $\volume(X)=\int_{X}\omega^k=1$.
 For every 
  $0\le q\leq k$ denote
   by  $H^{q,q}(X,\mathbb{C})$ the Hodge cohomology group of bidegree $(q,q)$ of $X$. If $R$ is a  closed $(q,q)$-current, its 
   cohomology
   class in $H^{q,q}(X,\mathbb{C})$ is   denoted by $\{R\}$. If $R$ is a current  on $X$ and $\Phi$ is a test form of complementary bidegree,
    the pairing $\langle R,\Phi\rangle$ denotes the value of $R$ at $\Phi$. In particular, for a positive $(q,q)$-current  $R$ on $X$, define the mass of $R$ by
   \[\Vert R\Vert:=\langle R, \omega^{k-q}\rangle.\]
When $R$ is  positive and closed, its mass depends only on its cohomology class $\{R\}$ in $H^{q,q}(X,\R)$.  Let
$\mathscr{C}_q(X)$ denote the cone of positive closed $(q,q)$-currents on $X$. 
Let  $\mathscr{D}_q(X)$ denote
  the  real space generated by positive closed $(q,q)$-currents on $X$, and by $\mathscr{D}_q^0(X)$ the subspace of $\mathscr{D}_q(X)$ given by  exact currents, i.e., currents $R$ whose class $\{R\}=0$.  
Following \cite{DS10JAG},
define  the norm $\Vert R\Vert_*$  of $R\in \mathscr{D}_q(X)$
 as 
\[\Vert R \Vert_*= \inf(\Vert R^+\Vert+\Vert R^-\Vert),\]
 where the 
 infimum
 is taken over the positive closed $(q,q)$-currents $R^{\pm}$ with $R=R^+-R^-$, and is reached by compactness.
 We can 
 equip
   $\mathscr{D}_q(X)$ and $ \mathscr{D}_q^0(X)$   with  the topology in which $R_n\to R$ if  $R_n$ converges to $R$ in the sense of currents and $\displaystyle\sup_n\Vert R_n\Vert_*<\infty$. Given an  integer $\ell>0$, we define on $\mathscr{D}_q(X)$ the norm 
   $\Vert\cdot\Vert_{\mathcal{C}^{-\ell}}$ and  the distance $\dist_{\ell}$ by 
$$\Vert R\Vert_{\mathcal{C}^{-\ell}}=\sup_{\Vert \Phi\Vert_{\mathcal{C}^{\ell}}\leq 1}|\langle R,\Phi\rangle| \quad \text{ and }\quad  \dist_{\ell}(R, R'):=\Vert R-R'\Vert_{\mathcal{C}^{-\ell}},$$ where $\Phi$ is a smooth  test form of bidimension $(q, q)$ on $X$. Observe that we can also extend the notion of mass  to any current, as the dual of the semi-norm $\mathcal C^0$ on forms.

\subsection{A dynamically adapted norm}\label{subsection:2.2}
For $0\leq q \leq k$, 
denote by $\mathscr{H}_q$ 
the set
\[
\mathscr{H}_{q}=\{\Phi\in\mathscr{C}_{q}(X) \colon \Phi \text{ is smooth},
\Vert\Phi\Vert\leq 1\}.\]
We 
define the 
$\bigstar$-norm on 
  $\mathscr{D}_q(X)$ as
\begin{equation}\label{def:bigstar_norm}
\Vert R\Vert^{\bigstar}:=\inf\sup_{\Phi\in\mathscr{H}_{k-q}}(\Vert R_1\wedge \Phi\Vert+\Vert R_2\wedge \Phi\Vert)
\end{equation} where
 the  infimum is taken over  all positive currents $R_1,R_2$ such that  $R=R_1-R_2$. 
 We stress that we do not require $R_1$ and $R_2$ to be closed.
 
\begin{lemma}
The function  $R\mapsto \Vert R\Vert^{\bigstar}$ 
defines a norm on 
$\mathscr{D}_q(X)$.
\end{lemma}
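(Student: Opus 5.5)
We verify the axioms of a norm directly from \eqref{def:bigstar_norm}, using two standard facts. First, for a positive $(q,q)$-current $S$ and a smooth positive closed $(k-q,k-q)$-form $\Phi$, the wedge product $S\wedge\Phi$ is a positive measure and $\Vert S\wedge\Phi\Vert=\langle S,\Phi\rangle$. Second, a positive $(q,q)$-current $S$ satisfies $|\langle S,\psi\rangle|\le C\Vert S\Vert\,\Vert\psi\Vert_{\mathcal C^0}$ for every test form $\psi$, with $C$ depending only on $(X,\omega)$. Nonnegativity of $\Vert\cdot\Vert^\bigstar$ is clear, since each term $\Vert R_i\wedge\Phi\Vert$ is the mass of a positive measure.

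\emph{Finiteness.} Let $R\in\mathscr D_q(X)$. Write $R=R^+-R^-$ with $R^\pm$ positive and closed; such a decomposition exists by definition of $\mathscr D_q(X)$, and we use it as one admissible choice in the infimum. For $\Phi\in\mathscr H_{k-q}$, both $R^\pm$ and $\Phi$ are closed, so $\langle R^\pm,\Phi\rangle$ depends only on the classes $\{R^\pm\}$ and $\{\Phi\}$. Hence
$$\langle R^\pm,\Phi\rangle\le C\,\Vert R^\pm\Vert\,\Vert\Phi\Vert\le C\Vert R^\pm\Vert,$$
using that the cohomological pairing is bounded on classes of positive closed currents of bounded mass. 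This gives $\Vert R\Vert^\bigstar\le C\Vert R\Vert_*<\infty$.

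\emph{Homogeneity and triangle inequality.} For $\lambda>0$, the map $(R_1,R_2)\mapsto(\lambda R_1,\lambda R_2)$ is a bijection between admissible decompositions of $R$ and of $\lambda R$, and it multiplies every quantity by $\lambda$. For $\lambda<0$ we also swap $R_1$ and $R_2$; the case $\lambda=0$ is trivial. For the triangle inequality, let $R=R_1-R_2$ and $R'=R_1'-R_2'$ be decompositions that are $\varepsilon$-optimal. Then $R+R'=(R_1+R_1')-(R_2+R_2')$ is an admissible decomposition, with positive, possibly non-closed, pieces. By linearity of $S\mapsto\Vert S\wedge\Phi\Vert$ on positive currents, and since the supremum of a sum is at most the sum of the suprema, we get $\Vert R+R'\Vert^\bigstar\le \Vert R\Vert^\bigstar+\Vert R'\Vert^\bigstar+2\varepsilon$.

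\emph{Definiteness.} This is the only step needing an idea. Suppose $\Vert R\Vert^\bigstar=0$, and fix $\varepsilon>0$ and a decomposition $R=R_1-R_2$ with $\sup_{\Phi\in\mathscr H_{k-q}}\bigl(\Vert R_1\wedge\Phi\Vert+\Vert R_2\wedge\Phi\Vert\bigr)<\varepsilon$. We test against $\Phi=\omega^{k-q}$, which is smooth, positive and closed, and has mass $\int_X\omega^k=1$, so it lies in $\mathscr H_{k-q}$. This gives $\Vert R_1\Vert+\Vert R_2\Vert<\varepsilon$. For every test form $\psi$, the second fact above then yields
$$|\langle R,\psi\rangle|\le C\varepsilon\Vert\psi\Vert_{\mathcal C^0}.$$
Letting $\varepsilon\to0$ gives $R=0$. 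Thus $\Vert\cdot\Vert^\bigstar$ is a norm on $\mathscr D_q(X)$.
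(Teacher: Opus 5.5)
Your proof is correct and follows the paper's argument: definiteness comes from testing against $\Phi=\omega^{k-q}\in\mathscr H_{k-q}$ to make $\|R_1\|+\|R_2\|$ small and then bounding $\langle R,\psi\rangle$, and the triangle inequality from adding near-optimal decompositions. Your extra finiteness check, $\|R\|^{\bigstar}\le C\|R\|_*$, is exactly the first estimate of Lemma \ref{lem:compare-two-norm}, which the paper proves separately right afterwards.
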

\begin{proof}
If $R=0$, we can
 take $R_1=R_2=0$. Then $R=R_1-R_2$, and clearly 
$\Vert R\Vert^{\bigstar}=0$.
If $\Vert R\Vert^{\bigstar}=0$,
 for every   $n\in\mathbb{N}$ there exist positive currents $R_{1,n},R_{2,n}$ such that $R=R_{1,n}-R_{2,n}$ and $\sup_{\Phi\in\mathscr{H}_{k-q}}(\Vert R_{1,n}\wedge \Phi\Vert+\Vert R_{2,n}\wedge \Phi\Vert)\leq 1/n$. In particular, taking 
 $\Phi=\omega^{k-q}$,
  it follows that $\Vert R_{1,n}\Vert+\Vert R_{2,n}\Vert\leq 1/n$. Let $\Omega$ be a $(k-q,k-q)$-test form with $\|\Omega\|_{\infty}\le 1$.  Hence, $|\Omega|\leq \|\Omega\|_\infty \omega^{k-q}$ and 
$$
|\langle R,\Omega\rangle|
= |\langle R_{1,n},\Omega\rangle - \langle R_{2,n},\Omega\rangle|
\le
\|\Omega\|_\infty \big( \|R_{1,n}\| + \|R_{2,n}\|\big).
$$
Letting $n\to\infty$, the right-hand side tends to zero, which implies
$\langle R,\Omega\rangle = 0$
for all test forms $\Omega$. Therefore, $R=0$.
It can also 
be checked directly that $\Vert\lambda R\Vert^{\bigstar}=|\lambda|\Vert R\Vert^{\bigstar}$ for every $\lambda\in{\mathbb{R}}$.
So it only
remains to prove the triangle inequality. 

Fix
$R, T\in\mathscr{D}_q(X)$. Write $R=R_1-R_2$ and $T=T_1-T_2$,
where $R_1,R_2,T_1,T_2$ are positive.
 Then we can decompose $R+T$ as $(R_1+T_1)-(R_2+T_2)$, and it follows that
\begin{align*}
\sup_{\Phi\in\mathscr{H}_{k-q}} & (\Vert(R_1+T_1)\wedge \Phi\Vert+\Vert(R_2+T_2)\wedge \Phi\Vert)\\
& \leq\sup_{\Phi\in\mathscr{H}_{k-q}}(\Vert R_1\wedge \Phi\Vert+\Vert R_2\wedge \Phi\Vert)+\sup_{\Phi\in\mathscr{H}_{k-q}}(\Vert T_1\wedge \Phi\Vert+\Vert T_2\wedge \Phi\Vert).
\end{align*}
As
all decompositions of $R$ and $T$ also give 
decompositions of $R+T$,
taking the infimum over all such decompositions gives
$$\Vert R+T\Vert^{\bigstar}\leq\Vert R\Vert^{\bigstar}+\Vert T\Vert^{\bigstar}.$$ 
This concludes the proof.\end{proof}

We say that a subset of $\mathscr{D}_q(X)$ is $\bigstar$-bounded if it is bounded 
with respect to
the  ${\bigstar}$-norm. 
We will
consider on $\mathscr{D}_q(X)$ and $\mathscr{D}_q^0(X)$ the following topology.
\begin{definition}
    We say that a sequence $\{R_n\}_{n\geq 1}$ in $\mathscr{D}_q(X)$ (or $\mathscr{D}_q^0(X)$) 
    $\bigstar$-converges to  a current $R$ if $R_n$ converges
     to $R$ in the sense of the currents and if $\Vert R_n\Vert^{\bigstar}$ are bounded by a  constant independent of $n$.
\end{definition}
\begin{lemma}\label{lem:compare-two-norm}
We have
$\Vert R\Vert^{\bigstar} \lesssim \, \Vert R\Vert_*$ 
for every $R \in \mathscr{D}_q(X)$ and
$\Vert S\Vert^{\bigstar} \asymp \Vert S\Vert = \Vert S\Vert_*$
for every $S\in \mathscr{C}_q(X)$, where the implicit constants are independent of $R$ and $S$.
\end{lemma}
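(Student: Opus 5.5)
The key input is a uniform bound of the form
\[
\Vert S\wedge\Phi\Vert\le C\,\Vert S\Vert\,\Vert\Phi\Vert
\]
for $S\in\mathscr{C}_q(X)$ a positive closed current and $\Phi$ a smooth positive closed $(k-q,k-q)$-form, with $C$ independent of $S$ and $\Phi$. Both inequalities reduce to this estimate.

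To prove it, note that $S\wedge\Phi$ is a positive measure, since $\Phi$ is smooth and both factors are positive. Its mass is the pairing $\langle S,\Phi\rangle$, which depends only on the classes $\{S\}$ and $\{\Phi\}$ because both are closed. The classes of positive closed currents lie in the pseudo-effective cone of $H^{q,q}(X,\R)$, and mass is a linear function on classes that is comparable to any norm on that cone. The cup product pairing is bilinear on finite-dimensional spaces, so
\[
|\langle S,\Phi\rangle|\le C\,|\{S\}|\,|\{\Phi\}|\lesssim \Vert S\Vert\,\Vert\Phi\Vert.
\]
Alternatively, and more concretely, one can use the standard fact that a positive closed current of mass $m$ is cohomologous to a bounded smooth form. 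Alternatively one can use the known inequality $\Vert S\wedge\Phi\Vert\lesssim\Vert S\Vert\,\Vert\Phi\Vert_{\mathcal C^0}$. However, $\Phi\in\mathscr H_{k-q}$ is only mass-normalized, not $\mathcal C^0$-bounded, so the cohomological argument is the right one. Obtaining constants uniform over $\mathscr H_{k-q}$ is the only delicate point, and the cohomological argument handles it.

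For the first inequality, take any decomposition $R=R^+-R^-$ with $R^\pm$ positive and closed. This is an admissible decomposition in the definition \eqref{def:bigstar_norm}. By the key bound,
\[
\sup_{\Phi\in\mathscr{H}_{k-q}}\bigl(\Vert R^+\wedge\Phi\Vert+\Vert R^-\wedge\Phi\Vert\bigr)\le C\bigl(\Vert R^+\Vert+\Vert R^-\Vert\bigr).
\]
Taking the infimum over such decompositions gives $\Vert R\Vert^\bigstar\le C\Vert R\Vert_*$.

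For $S\in\mathscr{C}_q(X)$, the equality $\Vert S\Vert_*=\Vert S\Vert$ is handled in two steps. The trivial decomposition $S=S-0$ gives $\Vert S\Vert_*\le\Vert S\Vert$. Conversely, for any closed decomposition $S=R^+-R^-$, mass is linear on closed positive currents through their classes, so $\Vert S\Vert=\Vert R^+\Vert-\Vert R^-\Vert\le\Vert R^+\Vert+\Vert R^-\Vert$. Together these give equality.

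The upper bound $\Vert S\Vert^\bigstar\lesssim\Vert S\Vert$ then follows from the first part. For the lower bound, take an arbitrary decomposition $S=R_1-R_2$ with $R_1,R_2$ positive but possibly non-closed. Test against $\Phi=\omega^{k-q}/\Vert\omega^{k-q}\Vert\in\mathscr{H}_{k-q}$, where $\Vert\omega^{k-q}\Vert=\int\omega^k=1$, so that $\Phi=\omega^{k-q}$. This yields
\[
\Vert R_1\wedge\omega^{k-q}\Vert+\Vert R_2\wedge\omega^{k-q}\Vert=\Vert R_1\Vert+\Vert R_2\Vert\ge \langle R_1-R_2,\omega^{k-q}\rangle=\Vert S\Vert.
\]
Taking the infimum over decompositions gives $\Vert S\Vert^\bigstar\ge\Vert S\Vert$.
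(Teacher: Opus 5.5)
Your proof is correct and follows essentially the same route as the paper. The first inequality and the upper bound $\Vert S\Vert^{\bigstar}\lesssim\Vert S\Vert$ come from the cohomological estimate $\Vert R_1\wedge\Phi\Vert\lesssim\Vert R_1\Vert\,\Vert\Phi\Vert$ for positive closed $R_1$ and $\Phi\in\mathscr H_{k-q}$. The lower bound comes from testing an arbitrary positive decomposition against $\Phi=\omega^{k-q}$. You also spell out two points the paper leaves implicit: why that cohomological constant is uniform (mass controls a norm on the cone of classes of positive closed currents), and the reverse inequality $\Vert S\Vert\le\Vert S\Vert_*$.
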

\begin{proof}
In the definition of
$\Vert R\Vert^{\bigstar}$,
 the decompositions of $R$ 
are taken over all
 positive currents $R_1, R_2 \ge 0$ such that $R = R_1 - R_2$. 
 In that of
 $\Vert R\Vert_*$, we require that $R_1, R_2 \ge 0$ are positive and closed.
    As
     the cone of positive closed currents is contained in the cone of positive currents,
      we obtain
    \begin{align*}
    \|R\|^{\bigstar} &= 
\inf_{\substack{ R_1, R_2 \ge 0}}
\sup_{\Phi \in \mathscr{H}_{k-q}} 
\big( \| R_1 \wedge \Phi \| + \| R_2 \wedge \Phi \| \big)\leq 
\inf_{\substack{ R_1, R_2 \ge 0\\ \rm closed}}
\sup_{\Phi \in \mathscr{H}_{k-q}} 
\big( \| R_1 \wedge \Phi \| + \| R_2 \wedge \Phi \| \big)\\
&\lesssim
\inf_{\substack{ R_1, R_2 \ge 0\\ \rm closed}}
\sup_{\Phi \in \mathscr{H}_{k-q}} 
\big( \Vert R_1\Vert\cdot\Vert\Phi\Vert+\Vert R_2\Vert\cdot\Vert\Phi\Vert\big)=\inf_{\substack{ R_1, R_2 \ge 0\\\rm closed}}(\Vert R_1\Vert+\Vert R_2\Vert)=\Vert R\Vert_*,
    \end{align*}
    where we use the fact
    that, since integrals can be computed cohomologically,
    we have $\|R_1\wedge \Phi \|\lesssim\|R_1\|\cdot \|\Phi\|$
    (and similarly for $R_2$)
for some
implicit constant independent of $R_1$ and $\Phi$.
This proves the first estimate in the statement.

\medskip

   For the second,
   fix 
 a positive closed $(q,q)$-current $S$. 
We can write $S = S - 0$.  Hence, in particular, we have
$\Vert S\Vert_*=\Vert S\Vert$.
As the mass of a positive closed current can be computed cohomologically, we have,
  \begin{align*}
\Vert S\Vert^{\bigstar}\leq\sup_{\Phi\in\mathscr{H}_{k-q}}\Vert S\wedge\Phi\Vert\lesssim \sup_{\Phi\in\mathscr{H}_{k-q}}\Vert S\Vert\cdot\Vert\Phi\Vert=\Vert S\Vert
  \end{align*}
 where again the implicit
  constant is independent of  $S$ and $\Phi$. 
For the opposite inequality,
by 
the definition of $\Vert S\Vert^{\bigstar}$,
 for every $\kappa>0$ there exist  positive currents $S_1, S_2>0$  
 such that  $S=S_1-S_2$ 
 and $\sup_{\Phi\in\mathscr{H}_{k-q}}(\Vert S_1\wedge\Phi\Vert+\Vert S_2\wedge\Phi\Vert)<\Vert S\Vert^{\bigstar}+\kappa$. 
 In particular,
 taking $\Phi = \omega^{k-q}$, we see that
 $\Vert S_1\Vert+\Vert S_2\Vert<\Vert S\Vert^{\bigstar}+\kappa$.
  Moreover, since $S$ is positive, we have
   $\Vert S_1\Vert+\Vert S_2\Vert=\Vert S+S_2\Vert+\Vert S_2\Vert=\Vert S\Vert+2\Vert S_2\Vert\geq\Vert S\Vert$.
   Thus, we get $\Vert S\Vert^{\bigstar}+ \kappa>\Vert S\Vert$ and letting $ \kappa\to 0$ yields $\Vert S\Vert^{\bigstar}\geq\Vert S\Vert$.
The assertion follows.
\end{proof}

\begin{corollary}\label{cor:density-resp-bigstar-norm}
Smooth forms are dense in $\mathscr{D}_{q}$ and in $\mathscr{D}_{q}^{0}$ for the $\bigstar$-topology. 
\end{corollary}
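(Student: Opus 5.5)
The plan is to reduce the statement to the classical density of smooth forms in $\mathscr{D}_q$ and $\mathscr{D}_q^0$ for the $*$-topology, and then transfer it using Lemma \ref{lem:compare-two-norm}. Recall the regularization result of Dinh--Sibony \cite{DS10JAG}. Every positive closed $(q,q)$-current $S$ on a compact Kähler manifold is the weak limit of smooth positive closed forms $S_n$ with $\|S_n\|\lesssim\|S\|$. Moreover, the $S_n$ can be chosen cohomologous to $S$, or in the same class up to a smooth closed form of controlled norm, for instance by convolution with automorphisms or via the kernel of the diagonal.

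Fix $R\in\mathscr{D}_q(X)$ and write $R=R^+-R^-$ with $R^\pm$ positive closed and $\|R^+\|+\|R^-\|\le 2\|R\|_*$. We regularize each piece separately, $R^\pm_n\to R^\pm$, with smooth closed forms $R^\pm_n$ satisfying $\|R^\pm_n\|_*\lesssim \|R^\pm\|$. Then $R_n:=R^+_n-R^-_n$ is smooth, converges weakly to $R$, and satisfies $\|R_n\|_*\lesssim\|R\|_*$. By the first inequality of Lemma \ref{lem:compare-two-norm}, $\|R_n\|^{\bigstar}\lesssim\|R_n\|_*\lesssim\|R\|_*$ uniformly in $n$. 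Hence $R_n$ $\bigstar$-converges to $R$, which gives density in $\mathscr{D}_q$.

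For $\mathscr{D}_q^0$, the approximants must also be exact. If the regularization preserves cohomology classes, then $\{R_n\}=\{R^+\}-\{R^-\}=\{R\}=0$ and we are done. Otherwise, let $\alpha_n$ be the harmonic (smooth closed) representative of the class $\{R_n\}$. Since $\{R_n\}\to\{R\}=0$ in the finite-dimensional space $H^{q,q}$, we have $\alpha_n\to 0$ smoothly. A smooth closed form $\alpha$ has $\|\alpha\|_*\lesssim\|\alpha\|_{\mathcal C^0}$, because it can be written as $(\alpha+C\omega^q)-C\omega^q$ with $C\simeq\|\alpha\|_{\mathcal C^0}$, and both terms are positive closed. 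Therefore $R_n-\alpha_n$ is smooth, exact, converges weakly to $R$, and has uniformly bounded $\|\cdot\|_*$-norm, hence uniformly bounded $\|\cdot\|^{\bigstar}$-norm.

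The main point is the regularization with mass control for positive closed currents on a compact Kähler manifold. This is standard, in Dinh--Sibony, so it is invoked rather than reproved. The rest is the comparison $\|\cdot\|^{\bigstar}\lesssim\|\cdot\|_*$ from Lemma \ref{lem:compare-two-norm}, which makes $*$-convergence imply $\bigstar$-convergence.
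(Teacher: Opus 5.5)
Your proof is essentially the paper's: both take the density of smooth forms in $\mathscr D_q$ and $\mathscr D_q^0$ for the $*$-topology from Dinh--Sibony \cite[Theorem 2.4.4]{DS10JAG}, and transfer it via $\|R\|^{\bigstar}\lesssim\|R\|_*$ from Lemma \ref{lem:compare-two-norm}.

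One correction to your recollection of the regularization result. On a general compact K\"ahler manifold, a positive closed current need \emph{not} be a weak limit of smooth \emph{positive} closed forms. For example, take $[E]$ for a curve $E$ with $E\cdot E<0$ on a surface: such smooth approximants $S_n$ would force $\{E\}^2=\lim\int_E S_n\ge 0$. Convolution with automorphisms is also unavailable in general. What Dinh--Sibony actually provide is approximation by differences $S_n^+-S_n^-$ of smooth positive closed forms of mass $\lesssim\|S\|$, via class-preserving operators such as $\mathcal L_\theta$. This is exactly the weaker statement your argument uses, so the proof goes through; your harmonic correction for $\mathscr D_q^0$ is harmless but unnecessary.
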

\begin{proof}
By  \cite[Theorem 2.4.4]{DS10JAG},
smooth forms are dense in  $\mathscr{D}_{q}$ and in $\mathscr{D}_{q}^{0}$ for the $*$-topology. By Lemma \ref{lem:compare-two-norm}, we have
$\|R\|^{\bigstar} \leq c \|R\|_*$ for every $R\in\mathscr{D}_{q}$, where the
 implicit constant is independent of $R$.
Therefore, any sequence $R_n$ that converges with respect to the $*$-topology also converges with respect to the $\bigstar$-topology. The assertion follows.
\end{proof}

\subsection{Super-potentials}\label{subsection:superpotentials}
 
We now  recall the notion
 of  the ($\alpha$-normalized)
  super-potential 
  for
   currents  in $\mathscr{D}_{k-q+1}^0(X)$ for  $0< q\leq k$, see \cite{BD, DS10JAG,DinhSibony10} 
   for more details.
Fix
 $S \in \mathscr{D}_{k-q+1}^0(X)$. Since $\{S\} = 0$,
  there exists a
  (not necessarily unique) current $U_S$ of bidimension $(q,q)$ such that 
$S = dd^c U_S.$
We refer to $U_S$ as a \emph{potential} of $S$.
Consider a fixed family of  real smooth closed
$({q,q})$-forms 
$\alpha_{ q}=(\alpha_{ q,1},\dots \alpha_{ q,h(q)})$,
 where $ h({ q}):=\dim H^{ {q,q}}(X,\mathbb{R})$
 is such  that  the family of classes $\{\alpha_{ q}\}=(\{\alpha_{ q,1}\},\dots,\{\alpha_{ q,h(q)}\}) $ is a basis of $H^{ q,q}(X,\mathbb{R})$.
 We further choose smooth real closed $({ k-q,k-q)}$-forms
$\check{\alpha}_{{ q}} = (\check{\alpha}_{ q,1},\ldots,\check{\alpha}_{ q,h(q)})$ 
representing  a dual basis
of $\alpha_{ q}$ in  $H^{ k-q, k-q}(X,\mathbb{R})$ for the Poincar\'e duality.  
By adding  to 
the
potential $U_S$ a suitable combination 
of the ${\check{\alpha}_{q,j}}$'s
 we can assume 
 that $U_S$ is  $\alpha_{ q}$-\emph{normalized},
  i.e., 
  that $\langle U_S, { \alpha_{q,j}}\rangle=0$ for every $1\leq j\leq h({q})$.  When $S$ is smooth we can choose $U_S$ to be smooth and
  $\alpha_{{ q}}$-normalized.  The $\alpha_{{ q}}$-normalized 
  \emph{super-potential} $\mathscr{U}_R$ of $R\in \mathscr{D}_q(X)$
  is defined on  smooth 
exact
  forms $S\in \mathscr{D}_{k-q+1}^0$ by 
  the identity
  \[\mathscr{U}_R(S):=\langle R, U_S\rangle,\]
   where $U_S$ is a
   $\alpha_{ q}$-normalized potential of $S$.
   Observe that the expression above is independent of the potential $U_S$ of $S$, as soon as this is $\alpha_{ q}$-normalized.
   
 Recall that we have defined two topologies on $\mathscr{D}_{q}$, induced by
 the $*$-norm
 and the $\bigstar$-norm, respectively.
We denote by
\[\mathscr{D}_{q,*}^1(X) := \{R\in 
\mathscr{D}_q \colon \|R\|_*\leq 1\}
\quad \text{ and } \quad
\mathscr{D}_{q,\bigstar}^1(X)
:=\{R\in \mathscr{D}_q \colon \|R\|^\bigstar\leq 1\}
\]
the unit balls of
 $\mathscr{D}_q(X)$ with respect to 
the two topologies.

\begin{definition}\label{def:Holder_continuity}
Let $\bullet$ be either $*$ or $\bigstar$.
 Take $R\in \mathscr{D}_q$
 and let $M, \gamma>0$ be
  positive constants.
\begin{enumerate}
\item We say that 
$R$
has a \emph{continuous super-potential}
with respect to the $\bullet$-topology
 if the function 
$\mathscr{U}_R$ extends continuously to $\mathscr{D}_{k-q+1}^0(X)$ 
with respect to
 the $\bullet$-topology.
\item
We say that 
the super-potential $\mathscr{U}_R$
of $R$
is 
$(M, \gamma)$-\emph{H\"older continuous}
with respect to
the $\bullet$-topology
if it is continuous
 with respect to the
 $\bullet$-topology
 and satisfies
$$
|\mathscr{U}_R(S)| \leq M \, \|S\|_{\mathcal{C}^{-2}}^{\gamma}
\quad \text{
for every 
}
S\in\mathscr{D}_{k-q+1}^0\cap\mathscr{D}_{k-q+1,\bullet}^1.$$
In this case, we denote
\[\Vert\mathscr{U}_R\Vert_{\gamma, \bullet}=\inf\{M'>0~ \colon
\mathscr{U}_{ R} \text{ is } (M',\gamma)\text{-H\"older continuous with respect to the  $\bullet$-topology}\}.\]
\end{enumerate}
\end{definition}

\begin{lemma}\label{lem:product_superpotential}
{Let $\bullet$ be either $*$ or $\bigstar$.}
Let  $Q\in \mathscr{D}^0_{k-s+1}(X)$ whose super-potential satisfies $\|\mathscr{U}_Q\|_{\gamma,\bullet}\leq1$. 
There exists $A_{1}>0$,
independent of $Q$ and $\gamma$,
such that
$\|\mathscr{U}_{\omega\wedge Q}\|_{\gamma,\bullet}\leq A_1^{\gamma}$.
\end{lemma}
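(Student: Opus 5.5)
The plan is to reduce the super-potential of $\omega\wedge Q$ to that of $Q$ evaluated on currents of the form $\omega\wedge S$. Set $q:=k-s+1$, so that $\mathscr{U}_Q$ is defined on smooth forms in $\mathscr{D}^0_{s}$. Since $\omega\wedge Q\in\mathscr{D}^0_{k-s+2}$, the functional $\mathscr{U}_{\omega\wedge Q}$ is defined on smooth $S\in\mathscr{D}^0_{s-1}$.

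First I would fix such a smooth exact $S$ and a smooth $\alpha$-normalized potential $U_S$ with $S=dd^cU_S$. Because $\omega$ is closed, $\omega\wedge U_S$ is a smooth potential of $\omega\wedge S\in\mathscr{D}^0_s$. This potential need not be normalized, but normalized potentials differ from it by a combination of the closed forms $\check\alpha_{q,j}$. Since $Q$ is exact, its pairing with any smooth closed form vanishes. Hence
\[
\mathscr{U}_{\omega\wedge Q}(S)=\langle \omega\wedge Q,U_S\rangle=\langle Q,\omega\wedge U_S\rangle=\mathscr{U}_Q(\omega\wedge S).
\]
This identity reduces the lemma to two comparison estimates for the map $S\mapsto\omega\wedge S$. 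It also gives continuity of $\mathscr{U}_{\omega\wedge Q}$ in the $\bullet$-topology, once we know that this map is $\bullet$-continuous.

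The two comparison estimates are as follows.

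\emph{(a) $\mathcal C^{-2}$-norm.} We have $\|\omega\wedge S\|_{\mathcal{C}^{-2}}\le A\|S\|_{\mathcal C^{-2}}$, because $\Phi\mapsto\omega\wedge\Phi$ is bounded in $\mathcal C^2$.

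\emph{(b) Unit-ball norm.} We have $\|\omega\wedge S\|_\bullet\le C\|S\|_\bullet$. For $\bullet=*$, wedging a decomposition $S=S^+-S^-$ into positive closed currents with $\omega$ gives positive closed currents, and their masses are controlled cohomologically. For $\bullet=\bigstar$, take any decomposition $S=R_1-R_2$ into positive (not necessarily closed) currents. Then $\omega\wedge S=\omega\wedge R_1-\omega\wedge R_2$ with $\omega\wedge R_i\ge0$. For $\Phi\in\mathscr H_{k-s}$ we have $\|(\omega\wedge R_i)\wedge\Phi\|=\|R_i\wedge(\omega\wedge\Phi)\|$. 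Moreover, $\omega\wedge\Phi$ is smooth, positive and closed, with mass $\le C\|\Phi\|\le C$ by cohomological computation, so $C^{-1}\omega\wedge\Phi\in\mathscr H_{k-s+1}$. Taking the supremum and then the infimum gives the claim. The same argument shows that $\bigstar$-convergent sequences are mapped to $\bigstar$-convergent sequences, which yields the continuity.

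Combining, for $S\in\mathscr{D}^0_{s-1}\cap\mathscr{D}^1_{s-1,\bullet}$ the current $C^{-1}\omega\wedge S$ lies in the unit ball. By linearity of $\mathscr{U}_Q$ and $\|\mathscr{U}_Q\|_{\gamma,\bullet}\le1$,
\[
|\mathscr{U}_{\omega\wedge Q}(S)|\le C\,\big(C^{-1}A\|S\|_{\mathcal C^{-2}}\big)^\gamma=C^{1-\gamma}A^\gamma\|S\|^\gamma_{\mathcal C^{-2}}.
\]

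The main obstacle is making the constant genuinely of the form $A_1^\gamma$, with no stray factor $C^{1-\gamma}$. The first thing I would try is to rescale so that $C\le1$: since the statement allows $A_1$ to depend on $\omega$ (but not on $Q$ or $\gamma$), one would like to replace $\omega$ by a multiple $c\omega$ with $c$ small. However, the $\bigstar$-norm and the unit ball $\mathscr{H}$ are defined with the fixed form $\omega$ (with $\int_X\omega^k=1$). So the rescaling is legitimate only if the lemma is read as holding for a suitable multiple of the K\"ahler form and then transferred back by linearity of $Q\mapsto\omega\wedge Q$. Whether that transfer works must be checked against the homogeneity of the definition of $\|\cdot\|_{\gamma,\bullet}$. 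If it does not, I would fall back on the restriction $0<\gamma\le1$. In that range, using $\|S\|_{\mathcal C^{-2}}\lesssim\|S\|_\bullet\le1$, the factor $C^{1-\gamma}$ can be absorbed into $A_1^\gamma$ only after enlarging $A$ appropriately, which makes $A_1$ depend on a lower bound for $\gamma$. Reconciling this with the required independence of $\gamma$ is the point I expect to need the most care.
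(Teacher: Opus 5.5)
Your route is the paper's: the identity $\mathscr U_{\omega\wedge Q}(S)=\mathscr U_Q(\omega\wedge S)$ combined with $\|\omega\wedge S\|_{\mathcal C^{-2}}\le A\|S\|_{\mathcal C^{-2}}$. You are also right that, before applying the hypothesis on $\mathscr U_Q$, one must know that $\omega\wedge S$ stays in the $\bullet$-unit ball. The paper's proof uses this without comment.

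There is, however, a gap: your argument ends at $C^{1-\gamma}A^\gamma\|S\|_{\mathcal C^{-2}}^\gamma$, and neither proposed remedy removes the factor $C^{1-\gamma}$.
\begin{itemize}
\item Rescaling cannot work. By linearity $\mathscr U_{c\omega\wedge Q}=c\,\mathscr U_{\omega\wedge Q}$, so a bound $(cA)^\gamma$ for $c\omega$ with $cC\le 1$ transfers back as $c^{-1}(cA)^\gamma=(1/c)^{1-\gamma}A^\gamma$. This is the same kind of stray factor.
\item A lower bound on $\gamma$ is incompatible with the statement. If $C>1$, then $C^{1-\gamma}A^\gamma\to C$ as $\gamma\to 0$, while $A_1^\gamma\to 1$, so no $\gamma$-independent $A_1$ could work.
\end{itemize}

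The missing observation is that the constant in your step (b) is exactly $C=1$, because masses are defined by pairing with powers of $\omega$. For $\Phi\in\mathscr H_{k-s}$, the form $\omega\wedge\Phi$ is smooth, positive and closed, and
\[
\|\omega\wedge\Phi\|=\int_X\omega\wedge\Phi\wedge\omega^{s-1}=\int_X\Phi\wedge\omega^{s}=\|\Phi\|\le 1 .
\]
So $\omega\wedge\Phi\in\mathscr H_{k-s+1}$ with no rescaling, and your argument gives $\|\omega\wedge S\|^{\bigstar}\le\|S\|^{\bigstar}$. Similarly, for positive closed $(s-1,s-1)$-currents $S^\pm$ one has $\|\omega\wedge S^\pm\|=\langle S^\pm,\omega^{k-s+1}\rangle=\|S^\pm\|$, hence $\|\omega\wedge S\|_*\le\|S\|_*$.

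Therefore $\omega\wedge S\in\mathscr D^0_{s}\cap\mathscr D^1_{s,\bullet}$ whenever $S\in\mathscr D^0_{s-1}\cap\mathscr D^1_{s-1,\bullet}$, and directly
\[
|\mathscr U_{\omega\wedge Q}(S)|\le\|\omega\wedge S\|_{\mathcal C^{-2}}^\gamma\le A^\gamma\|S\|_{\mathcal C^{-2}}^\gamma ,
\]
that is, $A_1=A$. With this computation inserted, the rest of your proposal is fine. That includes the identity obtained via normalized potentials, using that $Q$ is exact, and the $\bullet$-continuity of $S\mapsto\omega\wedge S$.
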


\begin{proof}
Take $S\in \mathscr{D}_{s-1}^0(X)$ with $\|S\|_\bullet\le1$.
Since $\{Q\}=0$, we have
$\mathscr{U}_{\omega\wedge Q}(S)
=
\mathscr{U}_Q(\omega\wedge S)$.
Moreover,
\[
\|\omega\wedge S\|_{\mathcal C^{-2}}
=
\sup_{\|\Phi\|_{\mathcal C^2}\le1}
|\langle { S},\omega\wedge\Phi\rangle|
\le
A_{1}\,\|S\|_{\mathcal C^{-2}},
\]
where $A_{1}$ depends only on $X$.
Hence, we deduce from the assumption on $\mathscr U_Q$ that
$|\mathscr{U}_{\omega\wedge Q}(S)|
\le
 A^\gamma_{1}
\|S\|_{\mathcal C^{-2}}^\gamma$.
This completes the proof.
\end{proof}

 We will need the following version 
of the ``Skoda-type" estimate
given in \cite[Proposition 2.1]{DS10} for the $*$-topology.
The proof is quite technical but 
 similar to the one in \cite{DS10},
hence
it is postponed to the Appendix.
The statement would still hold if the assumption $\|S\|_*\leq 1$ is replaced by $\|S\|^\bigstar\leq 1$,
but the version below will be enough for our purposes, as in our application the current $S$ will always be  positive closed.

\begin{proposition} \label{prop:U_S-estimates} 
Take ${ Q}\in \mathscr{D}_{k-{s}+1}^{0}$ with $\|{ Q}\|^{\bigstar} \leq 1$
and $\|\mathscr U_{ Q} \|_{\gamma, *}\leq M$.
 There exists
 a constant $A>0$ independent of $Q$, $\gamma$, and $M$ such that the super-potential $\mathscr{U}_{S}$ of $S$ satisfies
$$
\left|\mathscr{U}_{S}({Q})\right| \leq A\left(1+\gamma^{-1} \log ^{+} M\right)
$$ 
for any current $S\in\mathscr{D}_{{ s}}^{0}$ with $\|S\|_{*} \leq 1$, where $\log ^{+} (\cdot):=\max (0, \log(\cdot) )$.
\end{proposition}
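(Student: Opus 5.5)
We follow the strategy of \cite[Proposition 2.1]{DS10}. The idea is to regularize $S$ at a well-chosen scale, split $\mathscr U_S(Q)$ into a smooth part and an error part, and then optimize the scale.

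\textbf{Reductions.} By linearity and the definition of $\|S\|_*$, we may write $S=S^+-S^-$ with $S^\pm$ positive closed and $\|S^\pm\|\le 1$. Since $\{S\}=0$, the two classes agree: $\{S^+\}=\{S^-\}$. We fix a smooth closed form $\beta$ in this class with bounded norm; compactness of the set of classes of bounded mass makes this possible. Writing $S=(S^+-\beta)-(S^--\beta)$, it suffices to treat $S=S'-\beta$ with $S'$ positive closed of mass $\le 1$.

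\textbf{Regularization.} We use the regularization of positive closed currents on compact K\"ahler manifolds from \cite{DS10JAG}. For $\theta\in(0,1]$ it produces smooth closed forms $S'_\theta$ cohomologous to $S'$ with $\|S'_\theta\|_{\mathcal C^2}\lesssim\theta^{-N}$ for a fixed $N$. Moreover the difference is controlled: $S'-S'_\theta=dd^c U_\theta$, where $U_\theta$ is a current of order zero with $\|U_\theta\|\lesssim 1$, and $U_\theta$ is dominated in a suitable sense by a positive current plus $\theta$-small terms, namely (after normalization) $U_\theta\ge -C\,\Omega$ with $\Omega$ a positive closed current of bounded mass.

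We then split
\[
\mathscr U_S(Q)=\langle S'_\theta-\beta,U_Q\rangle+\langle U_\theta,Q\rangle ,
\]
up to normalization terms bounded by constants.

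\textbf{The two terms.} For the smooth term, $\langle S'_\theta-\beta,U_Q\rangle=\mathscr U_Q(S'_\theta-\beta)$. Rescaling by $\theta^{N}$ gives a current in the $*$-unit ball whose $\mathcal C^{-2}$-norm is $\lesssim\theta^N$ times a bounded quantity. The H\"older hypothesis then yields a bound $\lesssim \theta^{-N}M\theta^{N\gamma}$. This term is unsatisfactory as written, so instead we apply H\"older continuity to the difference $S'_\theta-S'_{\theta'}$ at dyadic scales $\theta=2^{-j}$. At each scale the $*$-norm is $\lesssim 1$ and $\|S'_{2^{-j}}-S'_{2^{-j-1}}\|_{\mathcal C^{-2}}\lesssim 2^{-j}$, giving
\[
|\mathscr U_Q(S'_{2^{-j}}-S'_{2^{-j-1}})|\le M2^{-j\gamma}.
\]
Summing from $j_0$ to $\infty$ gives $\lesssim M2^{-j_0\gamma}/(1-2^{-\gamma})$. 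The terms for $j<j_0$ are each bounded by a constant, using that $\|Q\|^\bigstar\le 1$ and that the smooth differences have bounded potentials; together they contribute $\lesssim j_0$.

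For the error term, the key point is to use $\|Q\|^\bigstar\le 1$. We write $Q=Q_1-Q_2$ with $\sup_{\Phi\in\mathscr H}\|Q_i\wedge\Phi\|\le 2$. The pairing against $U_\theta$ can be estimated through the positive smooth closed forms that dominate the regularization kernels, which lie in a fixed multiple of $\mathscr H$. This is exactly what the $\bigstar$-norm controls without requiring $Q_i$ to be closed, and it gives a bound $O(1)$ per scale.

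\textbf{Optimization.} Choosing $j_0\approx\gamma^{-1}\log^+M/\log 2$ makes the tail $O(1)$ (the factor $(1-2^{-\gamma})^{-1}\sim\gamma^{-1}$ is absorbed by shifting $j_0$ by $O(\gamma^{-1})$). The total is then $\lesssim 1+\gamma^{-1}\log^+M$, as claimed.

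\textbf{Main obstacle.} The delicate step is the per-scale uniform bound $|\langle U_{\theta}-U_{\theta/2},Q\rangle|\lesssim 1$ using only $\|Q\|^\bigstar\le1$. Since $Q_i$ is not closed, one cannot integrate by parts or compute cohomologically. We must instead show that the potential differences are bounded by smooth positive closed forms of uniformly bounded mass, so that each $Q_i$ is only ever tested against elements of $\mathscr H_{k-s+1}$ multiplied by a constant.
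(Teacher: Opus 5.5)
Your architecture is a discretized form of the paper's argument: regularize $S$ along a one-parameter family, pay $O(1)$ per dyadic scale down to scale $M^{-1/\gamma}$, then use H\"older continuity of $\mathscr U_Q$ below that scale. However, the step you call the main obstacle is exactly where $\|Q\|^\bigstar\le 1$ must be used, and you only announce it. You never exhibit the smooth positive closed forms of bounded mass against which the non-closed pieces $Q_i$ are tested. This is not automatic. For singular $S'$, the currents that naturally dominate the potential differences are positive and closed but not smooth, while $\|\cdot\|^\bigstar$ only controls pairings with elements of $\mathscr H_{s-1}$.

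The paper handles this in Proposition \ref{prop:star-bigstar-ddc}. It bounds the mass of $dd^c_\theta\,\mathscr U_S(Q_\theta)$ on $\mathbb P^1$ by $c\|S\|_*\|Q\|^\bigstar$. To do so, it writes the relevant current as $T\wedge\tau^*\Pi_1^*(Q)\wedge\tau^*\Pi_2^*(S)$ and splits $T=T_1-T_2$, $S=S_1-S_2$ into positive closed pieces, made smooth via \cite[Theorem 2.4.4]{DS10JAG}. It then pairs $Q_1+Q_2$ with $\Omega=(\Pi_1)_*\tau_*\big((T_1+T_2)\wedge\tau^*\Pi_2^*(S_1+S_2)\big)$, which is smooth, positive, closed and of cohomologically bounded mass. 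Finally it transfers the bound to $\theta\mapsto\mathscr U_{\mathcal L_\theta(S_i)}(Q)$ by symmetry and $\bigstar$-density. Since $\mathcal L_\theta$ depends only on $|\theta|$, this mass bound does yield your per-scale bound $|u(\theta)-u(\theta/2)|\lesssim 1$. So your dyadic scheme could replace the Skoda step, but only once this estimate is actually proved.

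Two further points need repair.

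\emph{The regularization.} You do not give a regularization with all the properties you use at once. The transforms $\mathcal L_\theta$ provide the uniform $*$-bound and $\dist_2(\mathcal L_\theta(S),S)\lesssim|\theta|\,\|S\|_*$ (Proposition \ref{prop:property-transform}). They do not provide smooth forms with $\mathcal C^2$-norm $\lesssim\theta^{-N}$: a single application of $\mathcal L_\infty$ is only controlled in $L^{q_1}$ (Lemma \ref{lem:L-infty}). So your argument does not bound the base term $\mathscr U_Q(S'_1-\beta)$. The paper iterates $\mathcal L_\infty$ $k+2$ times, running the structural-line estimate at each step, to reach $S_{k+2}\in L^\infty$. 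It then bounds $\langle S_{k+2},U_Q\rangle$ using $\|U_Q\|_{L^1}\lesssim\|Q\|^\bigstar$ (Proposition \ref{prop:L_1-potential}), a second place where the $\bigstar$-norm enters that you do not address.

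\emph{The optimization.} As written it is wrong. The tail $\sum_{j\ge j_0}M2^{-j\gamma}\approx M2^{-j_0\gamma}/(\gamma\log 2)$ is $O(1)$ only if $j_0\gtrsim\gamma^{-1}(\log^+M+\log(1/\gamma))$. So the factor $\gamma^{-1}$ costs a shift of order $\gamma^{-1}\log(1/\gamma)$, not $O(\gamma^{-1})$, and this leaves a spurious $\gamma^{-1}\log(1/\gamma)$ in the bound. Do not telescope the tail. Since $\|S'-S'_\theta\|_*\lesssim 1$ and $\dist_2(S',S'_\theta)\lesssim\theta$, a single application of H\"older continuity gives $|\mathscr U_Q(S'-S'_\theta)|\lesssim M\theta^\gamma$. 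This is $O(1)$ for $\theta\le M^{-1/\gamma}$, exactly as in the paper's final step.
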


We conclude this subsection with a simple cohomological decomposition that will be used
in Section \ref{ss:new-speed}
and Appendix \ref{as:proof-second}.
We state and prove it only for currents in $\mathscr D_p$, as we will need it only in this case, but the same proof applies for currents in $\mathscr D_q$ for any $q$.

\begin{lemma}
\label{lem:decom_S=alpha+S'}
For every $S \in \mathscr D_p$
there exists a smooth closed form $\widetilde S \in \mathscr D_p$, called the
\emph{standard form associated with $S$}, such that:
\begin{enumerate}
\item $\{S\}=\{\widetilde S\}$ and $\|\widetilde S\|_* \lesssim \|S\|_*$;
\item 
setting
$S^0:=S-\widetilde S$, then $S^0 \in \mathscr D_p^0$ and $\|S^0\|_* \lesssim \|S\|_*$;
\item setting
$M_{\widetilde S}:=\max_{\|\Omega\|^\bigstar \le 1} |\mathscr U_{\widetilde S}(\Omega)|$,
we have $M_{\widetilde S} \lesssim \|S\|_*$.
\end{enumerate}
All the
implicit constants are independent of $S$.
\end{lemma}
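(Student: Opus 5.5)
The plan is to build $\widetilde S$ from the fixed cohomology basis. Since $S\in\mathscr D_p$ is a difference of positive closed currents, it is closed and has a class $\{S\}\in H^{p,p}(X,\R)$. Write this class in the basis $\{\alpha_p\}$ as $\{S\}=\sum_j c_j\{\alpha_{p,j}\}$. Poincar\'e duality with the dual family $\check\alpha_p$ gives $c_j=\langle S,\check\alpha_{p,j}\rangle$, and we set $\widetilde S:=\sum_j c_j\alpha_{p,j}$. This form is smooth, closed and real, and it lies in $\mathscr D_p$: every smooth closed form of bidegree $(p,p)$ is a difference of two positive closed smooth forms, since adding $C\omega^p$ for $C$ large makes it positive.

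For item (1) I first bound the coefficients. Writing $S=S^+-S^-$ with $S^\pm$ positive closed and $\|S^+\|+\|S^-\|=\|S\|_*$, we get $|c_j|\le \|\check\alpha_{p,j}\|_{\mathcal C^0}(\|S^+\|+\|S^-\|)\lesssim\|S\|_*$. Next, for each $j$ choose once and for all a constant $C_j$ with $\alpha_{p,j}+C_j\omega^p\ge 0$. This gives the decomposition $\alpha_{p,j}=(\alpha_{p,j}+C_j\omega^p)-C_j\omega^p$ into positive closed forms, so $\|\alpha_{p,j}\|_*\lesssim 1$. The triangle inequality then yields $\|\widetilde S\|_*\lesssim\sum_j|c_j|\lesssim\|S\|_*$.

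Item (2) follows directly. By construction $\{S^0\}=\{S\}-\{\widetilde S\}=0$, so $S^0\in\mathscr D^0_p$, and $\|S^0\|_*\le\|S\|_*+\|\widetilde S\|_*\lesssim\|S\|_*$.

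Item (3) is the main point. The key identity is $\mathscr U_{\widetilde S}(\Omega)=\sum_j c_j\,\mathscr U_{\alpha_{p,j}}(\Omega)$, which holds by linearity of $R\mapsto\langle R,U_\Omega\rangle$. It then suffices to show that each $\mathscr U_{\alpha_{p,j}}$ is bounded on $\{\Omega\in\mathscr D^0_{k-p+1}:\|\Omega\|^\bigstar\le1\}$, with a bound independent of $\Omega$. Here $\mathscr U_{\alpha_{p,j}}$ is understood as the continuous extension from smooth $\Omega$.
\begin{itemize}
\item Since $\alpha_{p,j}$ is a smooth closed form, the super-potential of a smooth form can be written as $\mathscr U_{\alpha_{p,j}}(\Omega)=\langle \Omega, U_{\alpha_{p,j}}\rangle$ up to the normalization constants, where $U$ is a smooth potential of the exact part of $\alpha_{p,j}$.
\item Concretely, the normalization makes the cohomological part of $\alpha_{p,j}$ pair to zero against the $\alpha$-normalized potential $U_\Omega$. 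What survives is bounded by $|\langle \Omega,\beta\rangle|$ for a fixed smooth form $\beta$ (for instance a smooth potential of $\alpha_{p,j}$ minus a harmonic representative).
\item The pairing $|\langle\Omega,\beta\rangle|$ is then controlled by the mass: writing $\Omega=\Omega_1-\Omega_2$ with $\Omega_i$ positive and testing the $\bigstar$-norm against $\Phi=\omega^{k-q}$, we get $|\langle\Omega,\beta\rangle|\le\|\beta\|_{\mathcal C^0}(\|\Omega_1\|+\|\Omega_2\|)\lesssim\|\Omega\|^\bigstar$.
\end{itemize}

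The delicate step is making this identification rigorous for non-smooth $\Omega$. If the harmonic-representative argument proves awkward, I would fall back on a different route. Smooth closed forms have $(M,1)$-H\"older super-potentials, so $|\mathscr U_{\alpha_{p,j}}(\Omega)|\lesssim\|\Omega\|_{\mathcal C^{-2}}\lesssim\|\Omega\|^\bigstar$, the last inequality again coming from the mass bound. This would be combined with the density of smooth forms in the $\bigstar$-topology (Corollary \ref{cor:density-resp-bigstar-norm}) to pass to general $\Omega$. The conclusion is $M_{\widetilde S}\lesssim\sum_j|c_j|\lesssim\|S\|_*$.
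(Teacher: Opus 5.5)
Your proposal is correct and follows the paper's proof. In both, you expand $\{S\}$ in a fixed basis with coefficients obtained by pairing against the Poincar\'e-dual forms, bound these coefficients by $\|S\|_*$, use the triangle inequality for (1)--(2), and use linearity plus boundedness of the super-potentials of finitely many fixed smooth forms for (3); the paper only asserts that boundedness from $\bigstar$-continuity, whereas your estimate $|\langle \Omega,\beta\rangle|\lesssim \|\beta\|_{\mathcal C^0}\|\Omega\|^\bigstar$ makes it explicit.

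Two small remarks. First, since you build $\widetilde S$ from the normalizing forms $\alpha_{p,j}$ themselves, $\mathscr U_{\widetilde S}(\Omega)=\sum_j c_j\langle \alpha_{p,j},U_\Omega\rangle=0$ identically, so (3) is immediate in your version. Second, your aside about a ``harmonic representative'' should be phrased relative to the basis $\alpha_p$ instead: the $\alpha_p$-normalization kills the pairing of $U_\Omega$ with the $\alpha_{p,j}$, but not with a harmonic representative.
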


\begin{proof}
Recall that, by the notation fixed above, $\alpha_{{k-p}}=(\alpha_{ k-p,1},\dots,\alpha_{ k-p,h(k-p)})$
is a family of smooth closed real $( k-p,k-p)$-forms such that
$\{\alpha_{ k-p,1}\},\dots,\{\alpha_{ k-p,h(k-p)}\}$ form a basis of $H^{ k-p,k-p}(X,\mathbb R)$, and
$\check\alpha_{ k-p}=(\check\alpha_{ k-p,1},\dots,\check\alpha_{ k-p,h(k-p)})$
is the dual basis in $H^{ p,p}(X,\mathbb R)$ with respect to Poincar\'e duality.
Write
\[
\{S\}=\sum_{i=1}^{h( k-p)} s_i \{{\check{\alpha}_{k-p,i}}\},
\qquad\text{where}\qquad
s_i=\langle S,{ \alpha_{k-p,i}}\rangle .
\]
Since the forms ${\alpha_{k-p,i}}$ are fixed and smooth, we have
\[
|s_i| \lesssim \|S\|_*
\qquad\text{for every } 1\le i\le h( k-p),
\]
where the implicit constants are
independent of $S$ and can be taken independent of $i$.

Define
\[
\widetilde S:=\sum_{i=1}^{h( k-p)} s_i\, \check\alpha_{k-p,i}.
\]

Then $\widetilde S$ is a smooth closed $(p,p)$-form and, by construction,
we have $\{\widetilde S\}=\{S\}$. Moreover,
\[
\|\widetilde S\|_*
\le \sum_{i=1}^{h( k-p)} |s_i|\,\|{\check\alpha_{k-p,i}}\|_*
\lesssim \|S\|_*,
\]
which proves (1).

Set $S^0:=S-\widetilde S$ as in the statement. Since $\{S^0\}=
\{S\}-\{\widetilde S\}=0$, we have $S^0\in \mathscr D_p^0$, and by the triangle
inequality,
\[
\|S^0\|_* \le \|S\|_* + \|\widetilde S\|_* \lesssim \|S\|_*,
\]
where the implicit constant is independent of $S$.
This proves (2).

Finally, since each $\check\alpha_{k-p,i}$ is smooth, its super-potential $\mathscr U_{\check\alpha_{k-p,i}}$ is continuous
with respect to the $\bigstar$-topology. Hence, it is bounded on the $\bigstar$-unit ball
$\{\Omega\in \mathscr D_{k-p+1}^0 : \|\Omega\|^\bigstar\le 1\}$. Set
\[
m_i:=\max_{\|\Omega\|^\bigstar\le 1} |\mathscr U_{\check\alpha_{k-p,i}}(\Omega)| < +\infty.
\]
By linearity of super-potentials, we obtain
\[
M_{\widetilde S}
= \max_{\|\Omega\|^\bigstar\le 1}
\left|\sum_{i=1}^{h( k-p)} s_i\,\mathscr U_{{\check\alpha_{k-p,i}}}(\Omega)\right|
\le \sum_{i=1}^{h( k-p)} |s_i|\,m_i
\lesssim \|S\|_*.
\]
This proves (3) and completes the proof.
\end{proof}

\subsection{A decomposition lemma}
We conclude this section with the following lemma.
The proof is standard, but we give it for completeness.
\begin{lemma}\label{lem:phi_n^+}
Let $K$ be a compact set on $X$. Let $0 \leq s \leq k-1$ be an integer and
$\Phi$
a smooth $(s,s)$-form on $X$ with  compact support outside $K$. 
There  exist two positive
smooth  $(s+1,s+1)$-forms
$\Phi_{1}$ and $\Phi_2$ such that
\[dd^c\Phi=\Phi_1-\Phi_2
\quad \text{ and }
\quad
\supp \Phi_{j}\cap K=\emptyset \quad \text{ for } j=1,2.\]
\end{lemma}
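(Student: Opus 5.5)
The idea is to write $dd^c\Phi$ as a difference of two positive forms by adding a large multiple of a positive form supported away from $K$. Since $dd^c\Phi$ is smooth and supported in $\supp\Phi$, which is a compact set disjoint from $K$, it suffices to find one smooth positive $(s+1,s+1)$-form that is strictly positive on $\supp\Phi$ and vanishes near $K$.

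\textbf{Cutoff.} Set $L:=\supp\Phi$, a compact set with $L\cap K=\emptyset$. Choose an open neighbourhood $W$ of $L$ with $\overline W\cap K=\emptyset$. Then choose a smooth cutoff $\chi\colon X\to[0,1]$ with $\chi\equiv1$ on a neighbourhood of $L$ and $\supp\chi\subset W$.

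\textbf{Dominating form.} Let $\Theta:=\chi\,\omega^{s+1}$. This is a smooth positive $(s+1,s+1)$-form, because $\chi\ge0$ and $\omega^{s+1}$ is strictly positive. Its support lies in $\overline W$, hence is disjoint from $K$. On $L$ it equals $\omega^{s+1}$.

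\textbf{Choice of the constant.} The form $dd^c\Phi$ is a smooth real $(s+1,s+1)$-form supported in $L$. Strictly positive forms form an open cone, and the bound below is uniform on the compact set $L$. Hence there is a constant $C>0$ such that
\[
-C\omega^{s+1}\le dd^c\Phi\le C\omega^{s+1}
\]
on $X$, in the sense of strong positivity, i.e. $C\omega^{s+1}\pm dd^c\Phi$ is positive. Concretely, one can take $C$ comparable to $\|dd^c\Phi\|_{\mathcal C^0}=\|\Phi\|_{\mathcal C^2}$ up to a constant depending only on $(X,\omega)$.

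\textbf{Definition of $\Phi_1,\Phi_2$.} Set
\[
\Phi_2:=C\Theta,\qquad \Phi_1:=dd^c\Phi+C\Theta .
\]
Then $dd^c\Phi=\Phi_1-\Phi_2$, and both forms are smooth.

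\textbf{Positivity.} $\Phi_2$ is positive because $\Theta$ is. For $\Phi_1$, split $X$ into two regions:
\begin{itemize}
\item On the neighbourhood of $L$ where $\chi\equiv1$, we have $\Phi_1=dd^c\Phi+C\omega^{s+1}\ge0$ by the choice of $C$.
\item Outside $L$, $dd^c\Phi=0$, so $\Phi_1=C\chi\,\omega^{s+1}\ge0$.
\end{itemize}
These two regions cover $X$, so $\Phi_1$ is positive everywhere.

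\textbf{Supports.} Both $\supp\Phi_1$ and $\supp\Phi_2$ are contained in $L\cup\supp\chi\subset\overline W$, which is disjoint from $K$.

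\textbf{Main point.} The only step needing care is the existence of the constant $C$. It holds because $\omega^{s+1}$ lies in the interior of the cone of strongly positive forms, pointwise and uniformly on compact $X$. Any real $(s+1,s+1)$-form is a difference of two strongly positive forms, e.g. via a basis of the form $i\beta_1\wedge\bar\beta_1\wedge\dots$, and has $\mathcal C^0$ norm bounded by a multiple of $\omega^{s+1}$. Everything else in the argument is formal.
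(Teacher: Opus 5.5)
Your proof is correct and is essentially the paper's argument: a cutoff $\chi\equiv1$ near $\supp\Phi$ with support away from $K$, the dominating form $C\chi\omega^{s+1}$, and $C$ chosen so that $C\omega^{s+1}\pm dd^c\Phi\ge0$. One small correction: you only have $\|dd^c\Phi\|_{\mathcal C^0}\lesssim\|\Phi\|_{\mathcal C^2}$, not equality. Your assignment $\Phi_1=dd^c\Phi+C\chi\omega^{s+1}$, $\Phi_2=C\chi\omega^{s+1}$ has the right sign, whereas the paper's labels as written give $\Phi_1-\Phi_2=-dd^c\Phi$, a harmless swap.
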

\begin{proof} 
Since $\Phi$ is compactly supported outside $K$, 
there exist open sets $W\Subset W_1$ such that 
$\supp\Phi\subset W\Subset W_1$ and $\overline{{W}_1}\cap K=\emptyset$.
The form $dd^c\Phi$ is smooth and supported in $W$. 
 Choose a smooth  cut-off function $\chi$
 on $X$ such that $\chi\equiv1$  on $W$ and $\supp\chi\Subset W_1$. 
Since $dd^c\Phi$ is smooth, there exists $M>0$ such that
$-M\omega^{s+1}\le dd^c\Phi\le M\omega^{s+1}$ on $W$.
  Fix a constant $C>M$. Consider the following smooth $(s+1,s+1)$-forms: $$\Phi_1:=C\chi\omega^{s+1}\qquad \text{and} \qquad\Phi_2:=C\chi\omega^{s+1}+dd^c\Phi.$$ 
 It is clear that $\Phi_1$ is positive. We now show that $\Phi_2$ is also positive. Since it is compactly supported in $W_1$ it is enough to show that $\Phi_2$ is positive in $W_1$.
On 
$W$, we have
$\Phi_2 \ge (C - M)\omega^{s+1} > 0.$ On the remaining part
 $W_{1}\setminus W$, we have $\Phi_2 = C\chi\omega^{s+1}\geq 0$. Hence, $\Phi_1$ and $\Phi_2$ satisfy the  assertion.
\end{proof}

\section{Action of automorphisms and convergence towards Green currents}
\label{s:action}

Throughout this section, $(X,\omega)$ denotes a compact K\"ahler manifold of dimension $k$, and $f\colon X\to X$ is a holomorphic automorphism.

\subsection{Action of automorphisms and dynamical degrees}\label{subsection:Action}
For every integer $ 0\leq q\leq k$ the \textit{$q$-th dynamical degree} of $f$ is defined  as 
$$d_q(f):=\lim_{n\to\infty}\left(\int_{X}(f^n)^*(\omega^q)\wedge\omega^{k-q}\right)^{1/n}.$$ 
Results
 by Khovanskii \cite{Khovanskii}, Teissier \cite{Teissier}, and Gromov \cite{Gromov90} imply
  that the sequence ${q\mapsto\log{d_q}}$ is concave. So, there are integers $0\le p\le p'\le k$ such that
$$1=d_0<\dots<d_p=\dots=d_{p'}>\dots>d_k=1.$$
 Throughout this paper
 we shall assume that $f$ has a \emph{simple action on cohomology}. 
 This means that  $p=p'$ and
 that the action of $f^*$ on $H^{p,p}(X,\mathbb{R})$ 
 has a single eigenvalue of maximal modulus, denoted by $d_p$, which we call the \emph{main dynamical degree} of $f$.
We
define $\delta_f$ to be the maximum between 
$\max\{d_{p-1},d_{p+1}\}$
 and the absolute values of
  all the eigenvalues different from $d_p$
  for the action of $f^*$
 on $H^{p,p}(X,\mathbb{R})$. We have $\delta_f<d_p$ by definition.
Throughout the text, we shall refer to any $\delta$
with $\delta_f < \delta < d_p$  
as an \emph{auxiliary dynamical degree} for $f$.

\medskip

Fix $\varepsilon>0$.
For every $0\leq q \leq k$
 define $\delta_q := d_q+\varepsilon$ if
 $q\neq p$, and  $\delta_p := d_p$. 
When given
 an auxiliary
 degree $\delta$,
 we will always assume that 
  $\varepsilon$ is small enough so that $d_q+\varepsilon<\delta$
for every $q\neq p$.
 Since the mass of a positive closed current can be computed cohomologically, for every $R\in\mathscr{D}_q(X)$ we have 
\begin{equation}\label{eq:growth-*norm}
   \Vert(f^n)^*(R)\Vert_*\leq C\delta_q^n\Vert R\Vert_*
\end{equation} for some constant $C>1$
which is independent of $R$ and $n$
(see \cite[Lemma 4.1.2]{DS10JAG}). 
The following lemma shows that the dynamical
degrees control the growth
of the pull-backs of currents 
also with respect
to the $\bigstar$-norm.
\begin{lemma}\label{lem:growth}
   For every $R\in \mathscr{D}_q(X)$ and every integer $n\in\mathbb{N}$ we have
   $$\Vert(f^n)^*(R)\Vert^{\bigstar}\leq C\delta_q^n\Vert R\Vert^{\bigstar}$$
for some constant $C>1$ which is
 independent of $R$ and $n$.
\end{lemma}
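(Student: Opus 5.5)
The plan is to work directly from the definition \eqref{def:bigstar_norm} and move the automorphism onto the test forms $\Phi\in\mathscr H_{k-q}$ by duality. Fix $R\in\mathscr D_q(X)$ and any decomposition $R=R_1-R_2$ with $R_1,R_2$ positive, not necessarily closed. Since $f^n$ is a biholomorphism, $(f^n)^*R_1$ and $(f^n)^*R_2$ are positive currents and $(f^n)^*R=(f^n)^*R_1-(f^n)^*R_2$. This is an admissible decomposition in the definition of $\|(f^n)^*R\|^\bigstar$. It therefore suffices to show that, for every positive $(q,q)$-current $P$ and every $\Phi\in\mathscr H_{k-q}$,
\[
\|(f^n)^*P\wedge\Phi\|\le C\delta_q^n\sup_{\Psi\in\mathscr H_{k-q}}\|P\wedge\Psi\|,
\]
with $C$ independent of $P$, $\Phi$ and $n$. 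Summing this over $P=R_1,R_2$, taking the supremum in $\Phi$ and then the infimum over decompositions gives the lemma.

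For the key estimate, note that $(f^n)^*P\wedge\Phi$ is a positive measure, so its mass equals its total integral. By invariance of integration under the biholomorphism $f^n$,
\[
\|(f^n)^*P\wedge\Phi\|=\int_X (f^n)^*P\wedge\Phi=\int_X P\wedge (f^n)_*\Phi .
\]
Here $(f^n)_*\Phi=((f^n)^{-1})^*\Phi$ is again a smooth positive closed $(k-q,k-q)$-form. Its mass is computed cohomologically:
\[
\|(f^n)_*\Phi\|=\langle (f^n)_*\Phi,\omega^q\rangle=\langle \Phi,(f^n)^*\omega^q\rangle .
\]
This pairing is a pairing of two positive closed currents, so it is bounded by a constant times $\|\Phi\|\cdot\|(f^n)^*\omega^q\|_*$. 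By \eqref{eq:growth-*norm} applied to $\omega^q$, the latter is at most $C\delta_q^n$. Hence $\|(f^n)_*\Phi\|\le C'\delta_q^n$, and the form $\Psi:=(f^n)_*\Phi/(C'\delta_q^n)$ belongs to $\mathscr H_{k-q}$. Therefore
\[
\int_X P\wedge(f^n)_*\Phi = C'\delta_q^n\,\|P\wedge\Psi\|\le C'\delta_q^n\sup_{\Psi'\in\mathscr H_{k-q}}\|P\wedge\Psi'\|,
\]
which is the desired inequality. The constant comes only from the cohomological bound, so it is independent of $n$, $P$ and $\Phi$.

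The only delicate point is the uniform bound $\|(f^n)^*\omega^q\|_*\le C\delta_q^n$ with no polynomial factor in $n$. For $q=p$ this uses that $\delta_p=d_p$ is a simple dominant eigenvalue, so no Jordan block can produce a factor $n^j$. For $q\neq p$ it uses the $\varepsilon$-room in $\delta_q=d_q+\varepsilon$. Both are exactly what \eqref{eq:growth-*norm} provides, so I would invoke it directly. I would also remark that the argument never requires $R_1,R_2$ to be closed: closedness is needed only on the test-form side, where it holds automatically.
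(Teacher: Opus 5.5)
Your proof is correct and follows essentially the same route as the paper: pull $f^n$ over to the test form by duality, bound $\|(f^n)_*\Phi\|\lesssim\delta_q^n$, and renormalize into $\mathscr H_{k-q}$; the only difference is that you obtain the mass bound through $\langle\Phi,(f^n)^*\omega^q\rangle$ and \eqref{eq:growth-*norm} for $f$, whereas the paper applies \eqref{eq:growth-*norm} to $f^{-1}$. One small slip is harmless: summing your per-$P$ estimate gives a sum of two suprema rather than the supremum of the sum in \eqref{def:bigstar_norm}, which costs at most a factor $2$, and this loss disappears once you note, as the paper does, that your $\Psi$ depends only on $\Phi$ and $n$, so the same $\Psi$ serves for $R_1$ and $R_2$ simultaneously.
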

\begin{proof}
    By definition, for every $\kappa>0$ there exist  positive currents $R_1$ and $R_2$ such that $R=R_1-R_2$ and $$\sup_{\Phi\in \mathscr{H}_{k-q}}(\Vert R_1\wedge \Phi\Vert+\Vert R_2\wedge \Phi\Vert)<\Vert R\Vert^{\bigstar}+\kappa.$$ 
    Since 
    every
    $\Phi\in\mathscr{H}_{k-q}$ is a positive closed $(k-q,k-q)$-current,
    by \eqref{eq:growth-*norm}  (applied to $f^{-1}$)
    there exists a constant $C>1$,
     independent of $\Phi$ and $n$, such that
$ \|(f^n)_*\Phi\|\leq C\,\delta_q^n\,\|\Phi\| $.
    For every integer $n\geq 1$ we   decompose $(f^n)^*(R)=(f^n)^*(R_1)-(f^n)^*(R_2)$. Denoting
    $\Phi':=(f^n)_*(\Phi)/ \|(f^n)_*(\Phi)\|$  we have
    \begin{align*}
       \Vert(f^n)^*(R)\Vert^{\bigstar}&\leq\sup_{\Phi\in\mathscr{H}_{k-q}}(\Vert(f^n)^*(R_1) \wedge \Phi\Vert+\Vert(f^n)^*(R_2)\wedge \Phi\Vert)\\&=\sup_{\Phi\in\mathscr{H}_{k-q}} \Vert (R_1+R_2)\wedge (f^n)_*(\Phi)\Vert\\&=
    \sup_{\Phi\in\mathscr{H}_{k-q}}( \Vert(f^n)_*(\Phi)\Vert\cdot \Vert(R_1+R_2)\wedge\Phi'\Vert)\\&\leq
    \sup_{\Phi\in\mathscr{H}_{k-q}}( C\delta_q^n\Vert\Phi\Vert\cdot \Vert(R_1+R_2)\wedge\Phi'\Vert)\\
    &
    \leq C\delta_q^n\sup_{\widetilde \Phi\in\mathscr{H}_{k-q}}(\Vert R_1\wedge \widetilde \Phi\Vert+\Vert R_2\wedge \widetilde \Phi\Vert)\\
    & < C\delta_q^n(\Vert R\Vert^{\bigstar}+ \kappa).
    \end{align*}
As
$\kappa$ is arbitrary, the assertion follows.
\end{proof}
The following lemma provides a uniform growth estimate for wedge products of pull-backs of $\omega$.

\begin{lemma}\label{lem:pull-back-mass}
Let $\delta$ be an auxiliary
degree for $f$.
There exists a constant $A>0$ such that for every $1\le q<p$ and all integers
$n_1\ge n_2\ge \dots \ge n_q\ge 0$
we have
\[
\left\|
\bigwedge_{i=1}^{q}(f^{n_i})^*(\omega)
\right\|
\le A\,\delta^{n_1}.
\]
\end{lemma}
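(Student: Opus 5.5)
The idea is to peel off the pull-backs one at a time, using that $f$ is an automorphism, so that $(f^n)^*$ commutes with wedge products of smooth forms. Set $m_j:=n_j-n_{j+1}\ge 0$ for $1\le j\le q-1$. By $(f^a)^*(f^b)^*=(f^{a+b})^*$ and $(f^n)^*(\alpha\wedge\beta)=(f^n)^*\alpha\wedge(f^n)^*\beta$ for smooth forms, we can rewrite the product as a nested expression. Define recursively smooth positive closed forms
\[
Q_1:=\omega,\qquad Q_{j+1}:=\omega\wedge (f^{m_{q-j}})^*(Q_j)\quad (1\le j\le q-1).
\]
Then $Q_j$ is a smooth positive closed $(j,j)$-form, and
\[
\bigwedge_{i=1}^{q}(f^{n_i})^*(\omega)=(f^{n_q})^*(Q_q).
\]
Indeed, each layer of the recursion shifts all exponents by the gap $m$ between consecutive $n_i$'s.

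The estimate then follows from two elementary facts about a positive closed $(j,j)$-form $S$.

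First, for every $m\ge 0$ we have $\|(f^m)^*S\|\le C\delta_j^m\|S\|$. This is \eqref{eq:growth-*norm}: since $S$ and $(f^m)^*S$ are positive closed, their $*$-norms coincide with their masses.

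Second, $\|\omega\wedge S\|=\|S\|$. This is because $\|\omega\wedge S\|=\langle S,\omega\wedge\omega^{k-j-1}\rangle=\langle S,\omega^{k-j}\rangle$.

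Applying these inductively gives
\[
\|Q_{j+1}\|=\|(f^{m_{q-j}})^*Q_j\|\le C\delta_j^{m_{q-j}}\|Q_j\|.
\]
Finally,
\[
\Big\|\bigwedge_{i}(f^{n_i})^*\omega\Big\|\le C\delta_q^{n_q}\|Q_q\|.
\]

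The point to check is that every degree $\delta_j$ that appears is at most $\delta$. The indices satisfy $1\le j\le q<p$, so $j\neq p$ and $\delta_j=d_j+\varepsilon$. With the standing convention that $\varepsilon$ is chosen so that $d_j+\varepsilon<\delta$ for $j\neq p$, each factor is bounded by $\delta$. Multiplying the bounds gives
\[
\Big\|\bigwedge_{i=1}^q(f^{n_i})^*\omega\Big\|\le C^{q}\,\delta^{\,n_q+m_1+\dots+m_{q-1}}\,\|\omega\|=C^q\delta^{n_1},
\]
since $n_q+\sum_j m_j=n_1$ and $\|\omega\|=1$.

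Since $q\le k$, we may take $A:=C^k$, which is independent of $q$ and of the $n_i$. The only delicate point is the bookkeeping: one must verify that the outermost pull-back acts on bidegree $q$ and the intermediate ones on bidegrees $j<q$, all different from $p$. This is exactly where the hypothesis $q<p$ is used. I do not expect any genuine obstacle beyond this.
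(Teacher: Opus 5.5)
Your argument is correct and is essentially the paper's induction written out as an explicit recursion. The paper also writes the product as $(f^{n_q})^*\Omega$ with $\Omega=\bigwedge_{i=1}^{q}(f^{n_i-n_q})^*(\omega)$, bounds $\|\Omega\|$ by the case $q-1$ (wedging with $\omega$ does not change the mass), and applies the mass growth bound in bidegree $q<p$.

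One small indexing slip: the recursion should read $Q_{j+1}:=\omega\wedge(f^{m_j})^*(Q_j)$, not $\omega\wedge(f^{m_{q-j}})^*(Q_j)$. As written, for $q\ge 3$ your $Q_q$ produces the product with the gaps $m_j$ in reversed order, so the displayed identity is false. This does not affect the bound, which only depends on $n_q+\sum_j m_j=n_1$.
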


\begin{proof}
For $q=1$, the statement follows directly from the definition of $\delta$ and the fact that $q<p$.
Assume the result holds for $q-1\ge1$. Thus there exists $A_{q-1}>0$, independent of the $n_j$'s, 
such that
\[
\|\Omega\|\le A_{q-1}\,\delta^{\,n_1-n_q},
\qquad
\text{where}
\qquad 
\Omega :=
\bigwedge_{i=1}^{q}(f^{\,n_i-n_q})^{*}(\omega).
\]
We then compute
\begin{align*}
\left\|
\bigwedge_{i=1}^{q}(f^{n_i})^*(\omega)
\right\|
&=
\|(f^{n_q})^*\Omega\|
=
\|\Omega\|\,
\left\|
(f^{n_q})^*\!\left(\frac{\Omega}{\|\Omega\|}\right)
\right\| \\
&\le
c\,\|\Omega\|\,\delta^{n_q}
\le
cA_{q-1}\,\delta^{n_1-n_q}\delta^{n_q}
=
cA_{q-1}\,\delta^{n_1},
\end{align*}
where $c>0$ is independent of $\Omega$ and $n_q$.
This completes the induction.
\end{proof}

Finally, 
we will need 
the following
explicit dependence on $\gamma$ 
of the estimate 
in \cite[Lemma 3.2]{DS10}.
 A similar statement holds with $\|\cdot\|_{\gamma,\bigstar}$ instead of $\|\cdot\|_{\gamma, *}$.

\begin{lemma}\label{lem:Holder_constant}
Fix $1\leq s\leq k$
and $Q\in \mathscr{D}^0_{k-s+1}$. Assume 
 $\|\mathscr U_Q\|_{\gamma,*}\leq 1$ for some
$0<\gamma\le1$. There exist constants $A>1$ and $c>1$, independent of $Q$
 and $\gamma$,
such that for every $n\in\mathbb{N}$ we have
\[
\|\mathscr U_{(f^n)_* Q}
\|_{\gamma,*}
\leq A^{n\gamma}(c\,\delta_s^n)^{1-\gamma}.
\]
\end{lemma}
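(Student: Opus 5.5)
The approach is interpolation between two bounds for the super-potential of $(f^n)_*Q$. The first is a ``Lipschitz'' bound with constant $A^n$, valid against the $\mathcal C^{-2}$ norm. The second is a bound by $c\,\delta_s^n$, valid uniformly on the $*$-unit ball and coming from mass growth. We then combine them as $\min(A^n\|S\|_{\mathcal C^{-2}},\,c\delta_s^n) \le A^{n\gamma}(c\delta_s^n)^{1-\gamma}\|S\|_{\mathcal C^{-2}}^{\gamma}$, after first using the hypothesis on $Q$ to control things. More precisely, for $S\in\mathscr D^0_{s}$ smooth with $\|S\|_*\le 1$, invariance of the normalization gives
\[
\mathscr U_{(f^n)_*Q}(S)=\langle (f^n)_*U_Q,U_S\rangle \quad\text{or equivalently}\quad \mathscr U_{(f^n)_*Q}(S)=\mathscr U_Q\big((f^n)^*S\big)+\text{(normalization correction)}.
\]
We must check that $(f^n)^*$ of an $\alpha$-normalized potential remains normalized up to a smooth closed correction $\sum_j c_j\check\alpha_j$. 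The coefficients $c_j$ are bounded by $\lesssim \delta_s^n\|S\|_*$, since they are pairings of cohomology classes transported by $f^n$, and this correction pairs with the exact current $Q$ to give $0$. Hence $\mathscr U_{(f^n)_*Q}(S)=\mathscr U_Q((f^n)^*S)$ exactly.

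\textbf{Step 1 (uniform bound).} By \eqref{eq:growth-*norm}, $\|(f^n)^*S\|_*\le c\,\delta_s^n$. The hypothesis $\|\mathscr U_Q\|_{\gamma,*}\le1$ applied to $(f^n)^*S/(c\delta_s^n)$ gives
\[
|\mathscr U_Q((f^n)^*S)|\le c\delta_s^n\,\|(f^n)^*S/(c\delta_s^n)\|_{\mathcal C^{-2}}^\gamma\le c\delta_s^n\, C_0^\gamma,
\]
because $\|\cdot\|_{\mathcal C^{-2}}\lesssim\|\cdot\|_*$ via mass. Absorbing $C_0^\gamma\le C_0$ into $c$, this yields $|\mathscr U_{(f^n)_*Q}(S)|\le c\,\delta_s^n$.

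\textbf{Step 2 (Lipschitz-type bound).} Since $f$ is a holomorphic automorphism, pull-back satisfies $\|(f^n)^*S\|_{\mathcal C^{-2}}\le A_0^n\|S\|_{\mathcal C^{-2}}$, where $A_0=\|f^{-1}\|_{\mathcal C^2}$-type constant (test forms $\Phi$ pushed forward by $f^n$ have $\mathcal C^2$ norm growing at most $A_0^n$). Applying the hypothesis on $Q$ again, with the normalization $(f^n)^*S/(c\delta_s^n)$, we get
\[
|\mathscr U_{(f^n)_*Q}(S)|\le c\delta_s^n\Big(\frac{A_0^n\|S\|_{\mathcal C^{-2}}}{c\delta_s^n}\Big)^\gamma =A_0^{n\gamma}(c\delta_s^n)^{1-\gamma}\|S\|_{\mathcal C^{-2}}^\gamma.
\]
This is already the claimed form with $A=A_0$, so Step 1 serves only to guarantee the needed continuity and the uniformity.

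\textbf{Step 3 (continuity).} Continuity of $\mathscr U_{(f^n)_*Q}$ in the $*$-topology follows because $(f^n)^*$ is continuous for the $*$-topology: it preserves convergence of currents and, by \eqref{eq:growth-*norm}, bounded $*$-norms. Since $\mathscr U_Q$ is $*$-continuous, the identity of the first paragraph extends from smooth $S$ to all $S$ by the density result of \cite[Theorem 2.4.4]{DS10JAG}.

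The main obstacle is the precise bookkeeping of constants so that $A$ and $c$ are independent of $\gamma$. The essential point is that the hypothesis must be applied to the rescaled current $(f^n)^*S/(c\delta_s^n)$ lying in the $*$-unit ball, so that the exponent $1-\gamma$ lands on $c\delta_s^n$ and $\gamma$ on $A_0^n$. A secondary obstacle is verifying the normalization identity in the first paragraph when $s$ and the degrees of $\alpha$ differ.
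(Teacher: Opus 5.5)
Your Step 2 is the paper's proof: you use $\mathscr U_{(f^n)_*Q}(S)=\mathscr U_Q((f^n)^*S)$, valid because $\{Q\}=0$ and extended from smooth $S$ by density and continuity as in your Step 3; you rescale $(f^n)^*S$ by $c\,\delta_s^n$ into the $*$-unit ball via \eqref{eq:growth-*norm}; and you bound $\|(f^n)^*S\|_{\mathcal C^{-2}}\le A^n\|S\|_{\mathcal C^{-2}}$ using the smoothness of $f^{-1}$, so the argument is correct and follows the paper. The ``interpolation'' framing, Step 1 and the bound on the coefficients $c_j$ are superfluous, since exactness of $Q$ already kills any closed correction; also, the pairing $\langle (f^n)_*U_Q,U_S\rangle$ has mismatched bidegrees and should read $\langle (f^n)_*Q,U_S\rangle$.
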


\begin{proof}
Take $S\in \mathscr{D}_s^0(X)$ with $\|S\|_*\le1$. By the H\"older continuity
of $\mathscr U_Q$, we have
$|\mathscr{U}_Q(S)|\le \|S\|_{\mathcal C^{-2}}^\gamma$.
Since $\{Q\}=0$,
 we also have
\[
\mathscr{U}_{(f^n)_*Q}(S)
=
\mathscr{U}_Q((f^n)^*S).
\]
The above identity is clear when $S$ is smooth. 
 Since $\mathscr{U}_Q$  is continuous and smooth forms are dense in $\mathscr{D}_s^0(X)$, we deduce
 that $\mathscr{U}_{(f^n)_*Q}$ is continuous, hence 
  the above identity holds
for every $S\in\mathscr{D}_s^0(X)$.

By definition of dynamical degree there exists $c>1$ such that
$\|(f^n)^*S\|_*\le c\,\delta_s^n\|S\|_*$ for all $n$.
Hence $\|(f^n)^*S/(c\,\delta_s^n)\|_*\le1$ and
we obtain
\[
\mathscr{U}_{(f^n)_*Q}(S)
=
\mathscr{U}_Q((f^n)^*S)
=
c\,\delta_s^n\,
\mathscr{U}_Q\!\left(\frac{(f^n)^*S}{c\,\delta_s^n}\right)
\le
(c\,\delta_s^n)^{1-\gamma}
\|(f^n)^*S\|_{\mathcal C^{-2}}^\gamma .
\]
It remains to control the $\mathcal C^{-2}$-norm in the last term.
Since $f^{-1}$ is smooth, there exists $A>1$ such that
$\|f_*\Phi\|_{\mathcal C^2}\le A\|\Phi\|_{\mathcal C^2}$ for all test forms $\Phi$.
Therefore
\[
\|f^*S\|_{\mathcal C^{-2}}
=
\sup_{\|\Phi\|_{\mathcal C^2}\le1}
|\langle S,f_*\Phi\rangle|
\le
A\,\|S\|_{\mathcal C^{-2}} .
\]
Iterating gives $\|(f^n)^*S\|_{\mathcal C^{-2}}\le A^n\|S\|_{\mathcal C^{-2}}$.
Substituting into the previous estimate yields
\[
|\mathscr U_{(f^n)_*Q}(S)|
\le
A^{n\gamma}(c\,\delta_s^n)^{1-\gamma}
\|S\|_{\mathcal C^{-2}}^\gamma,
\]
which proves the assertion.
\end{proof}

\subsection{Speed of convergence towards the Green currents}\label{ss:new-speed}
As
 mentioned in the
 Introduction,
  $f$ admits a unique probability measure of maximal entropy $\mu$, called the equilibrium measure of $f$, which is the intersection of a positive closed $(p,p)$-current $T^+$ and a positive  closed $(k-p,k-p)$-current $T^-$ (the Green currents of $f$ and $f^{-1}$, respectively). The Green currents $T^{+}$ and $T^{-}$ satisfy the invariance properties 
$f^* T^{+} = d_p T^{+}$
and $ f_* T^{-} = d_p T^{-}$. 
By 
\cite[Section 4.3]{DS10JAG}, they are
the unique positive closed currents in their respective
cohomology classes. 
 Moreover, for any current 
$S\in\mathscr{D}_p(X)$
with 
$\|S\|_*<\infty$
 there exists a constant $c=c_S$
  such that $d_p^{-n}(f^n)^*(S)$
converges to $cT^+$.

\medskip

The following theorem gives 
an
exponential
estimate for 
the speed of convergence toward the Green current 
that we shall need in the sequel.

\begin{theorem} 
\label{prop_green_current}
\label{thm:new-speed}
Let $f$ be a holomorphic automorphism  of $(X, \omega)$
 with simple action on cohomology.
 Let $d_p$ be the main
 dynamical degree of $f$ and
$\delta$  an auxiliary dynamical degree.
Take $S\in \mathscr{D}_{p}$ with $\|S\|_*<\infty$
and let  $c$ be the constant such that $d_{p}^{-n}\left(f^{n}\right)^{*}(S)$ converges to $c T^{+}$. 
Let $R$ be a  $(k-p,k-p)$-current with $\|R\|<\infty$,
 $\|dd^c R\|^\bigstar <\infty$ and $\|\mathscr U_{dd^c R}\|_{\gamma, *}<\infty$
 for some $0 <\gamma \leq 1$. 
Then, for every $n\in \mathbb N$, we have
\begin{equation}
  \label{eq:Holder_Speed}
\left|\bigg\langle\frac{\left(f^{n}\right)^{*}(S)}{d_p^n}-c T^+,R\bigg\rangle\right| \leq B\bigg(\frac{\delta}{d_p}\bigg)^n
\|S\|_* \big( \|dd^c R\|^{\bigstar} + \|\mathscr U_{dd^c R}\|_{\gamma,  *}
+
\|R\|
\big),
\end{equation}
where $B>0$ is a constant independent of $R,S$,
 and  $n$.
\end{theorem}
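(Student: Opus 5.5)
We follow the strategy of \cite{DS10}, using the decomposition of Lemma \ref{lem:decom_S=alpha+S'}. We write $S=\widetilde S+S^0$, with $\widetilde S$ the standard form associated with $S$ and $S^0\in\mathscr D_p^0$. This splits the proof into two parts.

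\emph{The cohomologous smooth part.} The class $\{\widetilde S\}=\{S\}$ lies in $H^{p,p}(X,\mathbb R)$. Since the action is simple, we decompose it as $c\{T^+\}+\beta$, where $\beta$ lies in the $f^*$-invariant complement of the $d_p$-eigenline, on which $\|(f^n)^*\beta\|\lesssim \delta^n\|\beta\|$. We choose a smooth closed representative $\widetilde T$ of $\{T^+\}$, for instance a fixed normalized form. We also replace $\beta$ by a fixed linear combination of the $\check\alpha_{k-p,i}$, so that
\[
\widetilde S = c\,\widetilde T + \widetilde\beta .
\]
Then $d_p^{-n}(f^n)^*\widetilde T - T^+ = d_p^{-n}(f^n)^*(\widetilde T - T^+)$, and $\widetilde T - T^+$ is exact. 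By \cite{DS10JAG}, $T^+-\widetilde T=dd^c U$ with a potential $U$ that is bounded in a Hölder/super-potential sense.

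\emph{The exact part.} For an exact current $E$ of bidegree $(p,p)$ with $\|E\|_*\lesssim\|S\|_*$, namely $S^0$, $\widetilde T-T^+$, or a current of type $\widetilde\beta$ with its $\delta^n$ cohomological growth, we must bound $d_p^{-n}\langle (f^n)^*E,R\rangle$. Using $dd^c$ duality, we write
\[
\langle (f^n)^*E,R\rangle=\mathscr U_{(f^n)^*E}(dd^cR)=\mathscr U_{E}\big((f^n)_*dd^cR\big)=\mathscr U_{dd^cR}\big((f^n)^*E\big),
\]
up to the normalization by the $\alpha$-basis, which produces a cohomological term controlled by $\|R\|$ and the $\delta^n$ growth on non-dominant classes. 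The key is then to estimate $\mathscr U_{dd^cR}$ evaluated on $(f^n)^*E$. By \eqref{eq:growth-*norm}, $\|(f^n)^*E\|_*$ only grows like $d_p^n$, which is too much. We therefore argue as in \cite{DS10}. We apply Lemma \ref{lem:Holder_constant} with $s=k-p+1$ to $Q=dd^cR$, where the relevant degree is $\delta_{p-1}<\delta$. This shows that $(f^n)_*dd^cR$ has Hölder super-potential with constant $M_n\le A^{n\gamma}(c\delta^n)^{1-\gamma}\|\mathscr U_{dd^cR}\|_{\gamma,*}$. By Lemma \ref{lem:growth}, its $\bigstar$-norm is bounded by $C\delta^n\|dd^cR\|^\bigstar$. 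Normalizing by $\delta^n$, Proposition \ref{prop:U_S-estimates} applied with the positive-closed-type current $E$ (with $\|E\|_*\lesssim\|S\|_*$) yields
\[
|\mathscr U_E((f^n)_*dd^cR)|\lesssim \delta^n\|S\|_*\big(1+\gamma^{-1}\log^+ M_n\big)\big(\|dd^cR\|^\bigstar+\|\mathscr U_{dd^cR}\|_{\gamma,*}\big).
\]
Since $\log^+M_n=O(n)$, dividing by $d_p^n$ gives the bound $n(\delta'/d_p)^n$ with $\delta'<\delta$. Choosing $\delta'$ slightly smaller, which is possible since $\delta$ is any auxiliary degree, absorbs the factor $n$.

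\textbf{Main obstacle.} The delicate step is justifying Proposition \ref{prop:U_S-estimates} with the mixed norms. The proposition requires $\|Q\|^\bigstar\le 1$ together with the $*$-Hölder constant of $Q$, and we must check that both survive push-forward with only $\delta^n$ growth and not $d_p^n$ growth. This is where non-closedness of $R$ enters. For the $\bigstar$-norm, Lemma \ref{lem:growth} (for $f^{-1}$) gives growth $\delta_{k-p+1}(f^{-1})=d_{p-1}$, which is acceptable. For the super-potential, the issue is that $\mathscr U_{(f^n)_*Q}(S)=\mathscr U_Q((f^n)^*S)$ requires continuity, which Lemma \ref{lem:Holder_constant} supplies. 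Finally, the term $\|R\|$ absorbs the normalization constants $\langle R,\alpha\rangle$ and the pairing with the smooth $\widetilde\beta$ part.
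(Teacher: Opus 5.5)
\textbf{Verdict: there is a genuine gap, in the non-dominant smooth piece $\widetilde\beta$.}

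\textbf{What works.} Your handling of the exact pieces is correct. For $E=S^0$ and $E=\widetilde T-T^+$ you have $\langle (f^n)^*E,R\rangle=\mathscr U_E((f^n)_*dd^cR)$ with no normalization correction, because an exact current pairs to zero with each $\check\alpha_{p,j}$. Lemma~\ref{lem:growth} together with Proposition~\ref{prop:U_S-estimates} then gives $n(\widetilde\delta/d_p)^n$. This matches the paper's treatment of the exact part. Treating the dominant class through the exact current $\widetilde T-T^+$ is a legitimate shortcut that avoids Proposition~\ref{prop: Sp-convergence} for that component.

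There is one harmless slip. Since $dd^cR\in\mathscr D^0_{k-p+1}$, Lemma~\ref{lem:Holder_constant} applies with $s=p$, not $s=k-p+1$. So the H\"older constant of $(f^n)_*dd^cR$ grows like $A^{n\gamma}(c\,d_p^n)^{1-\gamma}$, not like $\delta^n$. This does not matter, since you only use $\log^+M_n=O(n)$.

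\textbf{The gap.} The class of $\widetilde\beta$ is $\{S\}-c\{T^+\}$, which is nonzero in general. So $\widetilde\beta$ is not exact and does not belong in your list of currents $E$. For it the duality step fails. The push-forward $(f^n)_*$ of an $\alpha_p$-normalized potential of $Q=dd^cR$ is no longer normalized, so $\mathscr U_{(f^n)^*\widetilde\beta}(Q)\neq\mathscr U_{\widetilde\beta}((f^n)_*Q)$. The discrepancy is not a cohomological term controlled by $\|R\|$. It is the sum in \eqref{eq:summ_superpotential}: super-potentials of the forms $f^*\alpha_{p,j}$ evaluated at $(f^l)_*Q$, weighted by $M^{n-l-1}A$.

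Your closing claim that $\|R\|$ absorbs the pairing with $\widetilde\beta$ is false. A bound $d_p^{-n}|\langle (f^n)^*\widetilde\beta,R\rangle|\lesssim(\delta/d_p)^n\|R\|$ for every $R$ of finite mass would mean the total variation of $(f^n)^*\widetilde\beta$ grows only like $\delta^n$. But write $\widetilde\beta=\beta^+-\beta^-$ with $\beta^\pm$ positive closed. Both $(f^n)^*\beta^\pm$ have mass of order $d_p^n$, and only the \emph{class} of their difference is small. A non-closed $R$ does not see this cohomological cancellation. So this term must also be paid for with $\|dd^cR\|^\bigstar$.

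\textbf{How to repair it.} The paper does this through Proposition~\ref{prop: Sp-convergence}. There the weights $M^{n-l-1}A$ decay like $\delta^{\,n-l-1}$ on non-dominant coordinates, and $|\mathscr U_{f^*\alpha_{p,j}}((f^l)_*Q)|\lesssim\delta^l\|Q\|^\bigstar$ by Lemma~\ref{lem:growth}.

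An equivalent fix inside your framework is a telescoping decomposition. Let $\widetilde\beta_m$ be the standard form of $(f^m)^*\beta$. Set $F_m:=f^*\widetilde\beta_m-\widetilde\beta_{m+1}$, which is smooth, exact, and of size $\lesssim\delta'^{\,m}$ for some $\delta_f<\delta'<\delta$. Then $(f^n)^*\widetilde\beta=\widetilde\beta_n+\sum_{l=0}^{n-1}(f^l)^*F_{n-1-l}$.
\begin{itemize}
\item The first term gives $|\langle\widetilde\beta_n,R\rangle|\lesssim\delta'^{\,n}\|R\|$.
\item Each remaining term is exact, so $|\langle (f^l)^*F_{n-1-l},R\rangle|=|\mathscr U_{F_{n-1-l}}((f^l)_*Q)|\lesssim\delta'^{\,n-1-l}\delta^l\|Q\|^\bigstar$.
\end{itemize}
Summing over $l$ gives a bound $\lesssim\delta^n\big(\|R\|+\|Q\|^\bigstar\big)$, which is what is needed.
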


Theorem  \ref{prop_green_current} is established in \cite{DS10}
(see also \cite[Corollary 3.3]{BD}) with 
$\|dd^c R\|_*$
instead of
$\|dd^c R\|^{ \bigstar}$.
We observe also
that recently Vergamini \cite{Vergamini25}
gave a more general version of the above convergence in \cite{DS10},
where only the 
log-H\"older-continuity 
 of the  super-potentials $\mathscr U_{dd^c R}$
 (with respect to $*$-topology)
is required. We will stick to the simpler version that we need here, but we observe that 
the arguments of
\cite{Vergamini25} could also be adapted to this case.

\medskip

A key ingredient in the proof
of Theorem \ref{thm:new-speed} 
will
be Proposition
\ref{prop: Sp-convergence} below.
As for 
Proposition \ref{prop:U_S-estimates}, the proof uses similar arguments as in \cite{DS10JAG}
(adapted to the use of $\|\cdot\|^\bigstar$
instead of $\|\cdot\|_*$),
and is therefore postponed to the Appendix.
We say that 
a convergence
   $S_n\to S$ of currents in $\mathscr{D}_p$
  is $SP^{\bigstar}$-\emph{uniform}
    if
 $\mathscr{U}_{{S}_{n}}$ converges
 to $\mathscr{U}_{{S}}$ uniformly on any $\bigstar$-bounded set of smooth forms in $\mathscr{D}_{k-{p}+1}^{0}$. 
Recall that 
we always assume that super-potentials are normalized as in Section \ref{subsection:superpotentials}.
Observe also
that, by linearity, it suffices to verify the $SP^{\bigstar}$-uniform convergence on the unit ball of $\mathscr{D}^0_{k-p+1}(X)$.

 \begin{proposition}
 \label{prop: Sp-convergence} 
Let $f:X\to X$ be a holomorphic automorphism of $(X,\omega)$ with simple action on cohomology.
Take $S\in \mathscr D_p\cap \mathscr D^1_{p,*}$, and let $\widetilde S\in \mathscr D_p$ be the standard form associated with $S$ as in Lemma 
\ref{lem:decom_S=alpha+S'}.
Assume that $d_p^{-n}(f^n)^*\{\widetilde S\}\to \underline{c}\in H^{p,p}(X,\mathbb R)$.
Then
\[
\widetilde S_n:=d_p^{-n}(f^n)^*(\widetilde S)
\]
converges
$SP^{\bigstar}$-uniformly
to a current $T_{\underline c}\in \mathscr D_p$
which depends only on $\underline c$.
Moreover, for every $\delta_f<\delta<d_p$, we have
\[
|\mathscr U_{\widetilde S_n}(Q)-\mathscr U_{T_{\underline {c}}}(Q)|
\lesssim
\left(\frac{\delta}{d_p}\right)^n \|Q\|^\bigstar
\qquad \text{for every } Q\in \mathscr D_{k-p+1}^0,
\]
where the implicit constant is independent of $S$ and $n$.
\end{proposition}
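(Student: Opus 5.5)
We follow the strategy of \cite[Section 4]{DS10JAG}, replacing the $*$-norm on test currents $Q$ by the $\bigstar$-norm throughout. The space of smooth closed forms $\widetilde S$ spanned by the $\check\alpha_{k-p,i}$ is finite dimensional. On it we study the operator $L:=d_p^{-1}f^*$. For each $i$, the form $L(\check\alpha_{k-p,i})$ is cohomologous to $\sum_j a_{ij}\check\alpha_{k-p,j}$, where $(a_{ij})$ is the matrix of $d_p^{-1}f^*$ on $H^{p,p}(X,\mathbb R)$. So we can write
\[
L(\check\alpha_{k-p,i})=\sum_j a_{ij}\check\alpha_{k-p,j}+dd^c U_i ,
\]
with $U_i$ smooth and $\alpha_{p-1}$-normalized. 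The super-potential of a pull-back then decomposes as
\[
\mathscr U_{L(R)}(Q)=d_p^{-1}\mathscr U_R(f_*Q)+\text{(a correction term linear in }\{R\}\text{)},
\]
as in \cite[Lemma 4.2.x]{DS10JAG}. Iterating gives a telescopic formula for $\mathscr U_{\widetilde S_n}(Q)$. It consists of a ``cohomological'' sum $\sum_{m<n} d_p^{-m}\langle U_{(m)},(f^m)_*Q\rangle$, where $U_{(m)}$ is a smooth normalized potential determined by the class $L^m\{\widetilde S\}$, plus the initial term $d_p^{-n}\mathscr U_{\widetilde S}((f^n)_*Q)$.

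\textbf{Estimating each term.} By Lemma \ref{lem:growth} applied to $f^{-1}$ (equivalently, pushing forward), we have
\[
\|(f^m)_*Q\|^\bigstar\lesssim \delta_{k-p+1}^m\|Q\|^\bigstar=(d_{p-1}+\varepsilon)^m\|Q\|^\bigstar\le \delta^m\|Q\|^\bigstar ,
\]
using Poincaré duality $d_{k-p+1}(f^{-1})=d_{p-1}(f)$. Smooth forms have super-potentials that are bounded on $\bigstar$-bounded sets. This is the argument of Lemma \ref{lem:decom_S=alpha+S'}(3), which bounds $|\langle U,\Omega\rangle|$ by the $\mathcal C^0$ norm of a smooth $U$ times $\sup_\Phi\|\Omega_j\wedge\Phi\|$, taking $\Phi=\omega^{p-1}$. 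Hence
\[
|d_p^{-n}\mathscr U_{\widetilde S}((f^n)_*Q)|\lesssim (\delta/d_p)^n\|S\|_*\|Q\|^\bigstar .
\]
For the cohomological sum we use the simple action. Write $\{\widetilde S\}=\underline c+\underline r$, where $\underline c$ lies on the $d_p$-eigenline and $\underline r$ lies in the invariant complement. Since $L$ fixes $\underline c$, the potentials attached to $\underline c$ are the same at every step. We then set
\[
\mathscr U_{T_{\underline c}}(Q):=\sum_{m\ge0}d_p^{-m}\langle U_{\underline c},(f^m)_*Q\rangle .
\]
This series converges and depends only on $\underline c$. Its tail beyond $n$ is $\lesssim (\delta/d_p)^n\|Q\|^\bigstar$. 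The terms attached to $L^m\underline r$ have size $\|L^m\underline r\|\lesssim (\delta/d_p)^m$ up to $\varepsilon$, coming from eigenvalues of modulus $\le\delta_f$. Multiplied by $d_p^{-m}\delta^m$-type growth, they sum to $O((\delta/d_p)^n)$ once we compare with the limit. The assumption $L^n\{\widetilde S\}\to\underline c$ guarantees that the $\underline r$-part is exactly the decaying part. We then identify $T_{\underline c}$ as the current whose super-potential is this limit. It is the $*$-limit of $\widetilde S_n$, namely a multiple of $T^+$, and the super-potential determines the current together with its class.

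\textbf{Main obstacle.} The delicate step is the bound
\[
|\langle U,\Omega\rangle|\lesssim \|U\|_{\mathcal C^0}\|\Omega\|^\bigstar
\]
for non-closed $Q$, combined with controlling the $m$-sums uniformly. The ratios $\delta_{k-p+1}/d_p$ and the eigenvalue ratios each give a factor $\le \delta'/d_p$ with $\delta_f<\delta'<\delta$. A product of two such factors could produce a factor $n$. We absorb this by choosing $\varepsilon$ and $\delta'$ strictly below $\delta$. Uniformity in $S$ follows because every constant is linear in $|s_i|\lesssim\|S\|_*\le1$.
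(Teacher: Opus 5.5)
Your route is essentially the paper's. The paper also iterates the pull-back identity of \cite[Lemma 4.2.3]{DS10JAG} to get the telescoping formula \eqref{eq:summ_superpotential}. It bounds $\|(f^l)_*Q\|^\bigstar\lesssim\delta^l\|Q\|^\bigstar$ via Lemma \ref{lem:growth} for $f^{-1}$, uses that super-potentials of smooth forms are bounded on $\bigstar$-bounded sets, and reads off the rate from the spectral gap. It packages all this as Proposition \ref{prop: Sp-convergence-1} for a general $S$, then applies it to $\widetilde S$ using $M_{\widetilde S}\lesssim\|S\|_*$.

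\textbf{An indexing slip.} In the telescoping sum, the smooth potential paired with $(f^m)_*Q$ is determined by the class $L^{n-1-m}\{\widetilde S\}$, not by $L^m\{\widetilde S\}$; compare the factor $M^{n-l-1}A/d_p^n$ in \eqref{eq:summ_superpotential}. With your indexing, the $\underline r$-terms would converge to a nonzero limit depending on $\underline r$, so $T_{\underline c}$ could not depend only on $\underline c$.

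With the correct indexing, take $\rho_1:=d_{p-1}+\varepsilon$ and $\rho_2$ slightly above the moduli of the non-leading eigenvalues. The $\underline r$-terms are then bounded by $d_p^{-(n-1)}\sum_{m<n}\rho_1^{m}\rho_2^{\,n-1-m}\|Q\|^\bigstar\le n\,(\max\{\rho_1,\rho_2\}/d_p)^{n-1}\|Q\|^\bigstar$. This is precisely the factor $n$ your last paragraph absorbs.

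Your splitting $\{\widetilde S\}=\underline c+\underline r$ is in fact more careful than the paper's final display. That display bounds $M^{-l-1}[d_p^{-n}M^nA-\underline c]$ as though $M^{-l-1}$ contracted like $d_p^{-l}$. Since that vector equals $d_p^{-n}M^{n-l-1}\underline r$, the correct estimate is your convolution bound.

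\textbf{Non-exact smooth forms.} Your $\mathcal C^0$-pairing bound $|\langle U,\Omega\rangle|\lesssim\|U\|_{\mathcal C^0}\|\Omega\|^\bigstar$ directly controls only super-potentials of $dd^c$-exact smooth forms. For a smooth closed non-exact $\beta$, such as $\widetilde S$ or $f^*\alpha_{p,j}$, first use the $dd^c$-lemma to write $\beta=\sum_i b_i\alpha_{p,i}+dd^cV$ with $V$ smooth. The normalization then gives $\mathscr U_\beta(\Omega)=\langle V,\Omega\rangle$, and your bound applies. Lemma \ref{lem:decom_S=alpha+S'}(3) does not argue this way (it invokes $\bigstar$-continuity), but quoting its conclusion is enough for your initial term.
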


\begin{proof}[Proof of Theorem \ref{thm:new-speed}]
By homogeneity, we may assume without loss of generality that
\[
\|S\|_* \le 1, \qquad \|R\| \le 1, \qquad 
\|dd^cR\|^\bigstar
\le 1,
\qquad 
\text{and }
\quad\|\mathscr U_{dd^cR}\|_{\gamma,*} \le 1.
\]
Recall that we normalize
potentials and super-potentials 
as in Section \ref{subsection:superpotentials}.
The current $R\in \mathscr D_{k-p}$ is a potential of $Q:=dd^cR\in\mathscr D_{k-p+1}^0$, but it is not
necessarily $\alpha_{ p}$-normalized.
Let $\mathscr U_{S_n}$ and $\mathscr U_+$ denote the $\alpha_{ p}$-normalized super-potentials of
$S_n:=d_p^{-n}(f^n)^*(S)$ and $T^+$,
respectively. Set
\begin{equation}
\label{eq:Q-and-R'}
R' := R-\sum_{j=1}^{h( p)} c_j\,{ \check\alpha_{p,j}},
\qquad\text{where}\qquad
c_j:=\langle R,\alpha_{p,j}\rangle.
\end{equation}
Then $R'$ is an $\alpha_{ p}$-normalized potential of $Q$.
We then
have
 \begin{align}
\notag\bigg|\bigg\langle\frac{\left(f^{n}\right)^{*}(S)}{d_p^n}
  -c T^+,  R\bigg\rangle\bigg|
 &\leq
 \left|\bigg\langle\frac{\left(f^{n}\right)^{*}(S)}{d_p^n}-c T^+,R'\bigg\rangle\right|+
\left|\bigg\langle\frac{\left(f^{n}\right)^{*}(S)}{d_p^n}-c T^+,R-R'\bigg\rangle\right|\\
&\label{eq:thm1-1} \leq 
 \bigg|\mathscr{U}_{{S_n}}(Q)-c \mathscr{U}_{+}(Q)\bigg|
 +\sum_{j}|c_j|\, \left|\bigg\langle\frac{\left(f^{n}\right)^{*}(S-cT^+)}{d_p^n},{\check\alpha_{p,j}}\bigg\rangle\right|,
\end{align}
where in the last step we used the fact
that $R'$ is a normalized potential 
(for the first term)
and
the definition of $R'$ and the invariance of $T^+$
(for the second term).
We now consider separately the two terms
in the last line.

\smallskip
\noindent\textbf{Step 1. Estimate of the first term.}
We have
\begin{equation}\label{e-thm13}
\bigg|\mathscr{U}_{{S_n}}(Q)-c \mathscr{U}_{+}(Q)\bigg|\lesssim
(\delta / d_p)^{n},
\end{equation}
where the implicit constant is independent of $S,Q$, and $n$. 

\begin{proof}[Proof of Step 1]

Let $\widetilde S\in\mathscr D_p$ 
be the standard 
form associated with $S$
as in Lemma \ref{lem:decom_S=alpha+S'},
and set
$S^0:=S-\widetilde S\in \mathscr D^0_p$ 
as in that lemma. 
For every $n\in \mathbb N$, we 
have
\[
S_n=\widetilde S_n+S_n^0,
\qquad\text{where}\qquad
\widetilde S_n:=d_p^{-n}(f^n)^*(\widetilde S),
\quad
S_n^0:=d_p^{-n}(f^n)^*(S^0).
\]
By the linearity of super-potentials,
we have
\begin{align*}
    \bigg|\mathscr{U}_{{S_n}}(Q)-c \mathscr{U}_{+}(Q)\bigg|\leq\bigg|\mathscr{U}_{{\widetilde S_{n}}}(Q)-c \mathscr{U}_{+}(Q)\bigg|+\bigg|\mathscr{U}_{{S^0_n}}(Q)\bigg|.
\end{align*}
We now treat separately the first term (the smooth part) and the second term (the exact part) of the right-hand side of the above expression.

\smallskip
\noindent\textbf{Step 1a. The smooth part.}
Since $\widetilde S_n$ is smooth, $\mathscr U_{\widetilde S_n}$ is continuous with respect to the
$\bigstar$-topology.
Proposition
\ref{prop: Sp-convergence}
implies that the convergence
$\widetilde S_n\to T_{\underline c}=cT^+$ is $SP^{\bigstar}$-uniform
and that
\begin{align}\label{eq-est-for-beta-S}
\bigg|\mathscr{U}_{{\widetilde S_{n}}}(Q)-c \mathscr{U}_{+}(Q)\bigg|\lesssim \left(\frac{\delta}{d_p}\right)^n
\end{align}
 where the implicit constant is independent of $S$, $Q$,   and $n$. 

\smallskip
\noindent\textbf{Step 1.b. The exact part.}
We now estimate $|\mathscr U_{S_n^0}(Q)|$.
Since $\{S^0\}=0$, we also have $\{S_n^0\}=0$.
Fix $\widetilde\delta$ such that $\delta_f<\widetilde \delta<\delta$, and define
\[
Q_n:=C^{-1}\widetilde\delta^{-n}(f^n)_*(Q),
\]
where $C\ge 1$ is chosen large enough so that $\|Q_n\|^\bigstar\le 1$ for every $n$,
see Lemma 
\ref{lem:growth}. 
Applying Lemma
\ref{lem:Holder_constant}
with $s=p$, and using the assumption
$\|\mathscr U_Q\|_{\gamma,*}\le 1$, 
we get
\[
\|\mathscr U_{Q_n}\|_{\gamma,*}
=
C^{-1}\widetilde\delta^{-n}\|\mathscr U_{(f^n)_*(Q)}\|_{\gamma,*}
\le
C^{-1}
\widetilde\delta^{-n}A^{n\gamma}(c\,d_p^n)^{1-\gamma}
\lesssim
A^{n\gamma}
(d_p /\widetilde \delta)^n.
\]
Since $\|S^0\|_*\lesssim 1$
(by Lemma \ref{lem:decom_S=alpha+S'}(2)), 
Proposition \ref{prop:U_S-estimates} gives
\begin{align*} \big|\mathscr{U}_{S^0}({ Q}_n)\big|=\|S^0\|_*\,\big|\mathscr{U}_{S^0 /\|S^0\|_*}({ Q}_n)\big|\lesssim
1+\gamma^{-1}\log^+\|\mathscr{U}_{{ Q}_n}\|_{\gamma,*}\lesssim n/\gamma,
\end{align*}
where the implicit 
constants are
independent of $S$, $Q$, and $n$. 
Finally, we deduce 
\begin{equation}\label{eq:thm1-4}
\left|\mathscr{U}_{S_n^{0}}(Q)\right|=C\left(\widetilde \delta / d_p\right)^{n}\left|\mathscr{U}_{S^{0}}\left({Q}_{n}\right)\right| \lesssim n\left(\widetilde\delta /d_p\right)^{n}/\gamma\lesssim \left(\delta/ d_p\right)^{n},
\end{equation}
where again the implicit constant
is 
independent of  $S$, $Q$, and $n$.

 \medskip
 
 Combining \eqref{eq-est-for-beta-S} and \eqref{eq:thm1-4}
 we obtain  \eqref{e-thm13},
 which completes the proof of Step 1.
\end{proof}

\smallskip
\noindent\textbf{Step 2. Estimate of the second term.}
We have
\[
\sum_j |c_j|
\left|
\left\langle
\frac{(f^n)^*
(S-cT^+)}{d_p^n},\,{\check\alpha_{p,j}}
\right\rangle
\right|
\lesssim
\left(\frac{\delta}{d_p}\right)^n,
\]
where the implicit constant is independent of $S$ and $n$. 

\begin{proof}[Proof of Step 2.]
Recall that
every
$\check\alpha_{p,j}$ is smooth, and
that $d_p^{-n}(f^n)^*(S-cT^+)\to0$. Applying 
\cite[Corollary 3.3]{BD}
(see also \cite[Lemma 4.1]{Vergamini25}),
for each $j$
we obtain 
\begin{align*}
\left|\bigg\langle\frac{\left(f^{n}\right)^{*}(S-cT^+)}{d_p^n},{\check\alpha_{p,j}}\bigg\rangle\right|
\lesssim
\left(\frac{\delta}{d_p}\right)^{n}\|S-cT^+\|_*\,\|{\check\alpha_{p,j}}\|,
\end{align*}
where the implicit constant
is 
independent of
$S$, $c$, $j$, and $n$. 
Since the constant $c=c(S)$ depends linearly on 
the cohomology class $\{S\}$,
we have
\[\|S-cT^+\|_*\leq \|S\|_*+|c|\,\|T^+\|_*\lesssim \|S\|_*,\] 
where again the implicit constant 
is independent of  $S$, $c$ and  $T^+$.  
We then obtain 
\begin{equation}\label{e-thm2-1}
\sum_{j}|c_j|\, \left|\bigg\langle\frac{\left(f^{n}\right)^{*}(S-cT^+)}{d_p^n},{\check\alpha_{p,j}}\bigg\rangle\right|\lesssim \left(\frac{\delta}{d_p}\right)^n\, \|S\|_*\,\sum_{j}|c_j|
\lesssim
\left(\frac{\delta}  {d_p}\right)^n\, \|S\|_*,
\end{equation}
where in the last step we used the definition 
\eqref{eq:Q-and-R'} of the $c_j$'s
and the assumption $\|R\|\leq 1$ to get 
\[
\sum_j |c_j| \lesssim \|R\| \le 1.
\]
The desired estimate follows.
\end{proof}

The assertion follows
combining 
 \eqref{eq:thm1-1} with estimates in Steps 1 and 2. The proof is complete.
\end{proof}

\begin{remark}\label{rem:Bigstar}
As in Proposition \ref{prop:U_S-estimates},
 the term 
(and assumption on) 
$\Vert S\Vert_*$
in 
Theorem \ref{thm:new-speed}
can be replaced by (an assumption on)
$\Vert S\Vert^{\bigstar}$. 
We did not emphasize this point, 
since, in
the proof of our main Theorem \ref{thm: dimX=k, p}
(see \eqref{eq:SR}),
the current
$S$ 
will always be positive closed and 
the main difficulties arise 
when studying
the current $R$.
\end{remark}

\section{Decay estimates outside the support of \texorpdfstring{$T^+$}{}}
\label{sec:decay_estimates}
Throughout this section, $(X,\omega)$ denotes a compact K\"ahler manifold of dimension $k$, and $f\colon X\to X$ is a holomorphic automorphism with simple action on cohomology. 
We assume that $d_p$ is the main dynamical degree and fix an auxiliary dynamical degree $\delta<d_p$.
All constants appearing below may depend on $X$, $f$,
$\omega$, and 
$\delta$,
 but are independent of the integers $n_j$
 unless otherwise specified.

\subsection{Notations and a preliminary estimate}

For integers $n_1\ge\cdots\ge n_{ \ell}\ge0$ and a smooth $(j,j)$-form $\Theta$,
set
\begin{equation}\label{eq:def-omega}
\Omega_{n_1,\dots,n_{ \ell}}(\Theta)
:=
(f^{n_1})_*(\Theta)
\wedge
(f^{n_2})_*(\omega)
\wedge\cdots\wedge
(f^{n_{\ell}})_*(\omega),
\end{equation}
and
\begin{equation}\label{eq:def-omegaprime}
\Omega'_{n_1,\dots,n_{\ell}}(\Theta)
:=
\Omega_{n_1,\dots,n_{\ell}}(\Theta)\wedge\omega .
\end{equation}

\begin{proposition}\label{prop:M_q_general}
Let $j, \ell\ge1$ satisfy $ \ell+j-1\le k-p$.
Let $\Theta$ be a smooth $dd^c$-exact $(j,j)$-form 
with $\|\mathscr U_{\Theta}\|_{\gamma, *}\leq 1$  
for some $0<\gamma\leq 1$.
There exist constants $A>1$ and $C_{\gamma,{\ell}}>1$
such that for all $n_1\ge\cdots\ge n_{\ell}\ge0$
we have
\[
\|
\mathscr U_{\Omega'_{n_1,\dots,n_{ \ell}}(\Theta)}
\|_{\gamma,{*}}\le
C_{\gamma,{\ell}} A^{n_1\gamma}\delta^{n_1}.
\]
\end{proposition}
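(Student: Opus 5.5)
Write $\Omega:=\Omega_{n_1,\dots,n_\ell}(\Theta)$, so that $\Omega'=\Omega\wedge\omega$. Since $\Theta$ is $dd^c$-exact, so is $(f^{n_1})_*\Theta$, and hence so are $\Omega$ and $\Omega'$ (wedging with closed forms preserves exactness). The plan is to compute the super-potential of $\Omega'$ by duality. We move everything onto the test current and use the H\"older continuity of $\mathscr U_\Theta$ together with mass growth bounds. Lemma \ref{lem:product_superpotential} handles the final factor $\omega$, so it suffices to prove the analogous bound for $\Omega$ itself, with bidegree constraints $\ell+j-1\le k-p$.

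For the super-potential of $\Omega$, take a test current $S\in\mathscr D^0$ of complementary bidegree with $\|S\|_*\le 1$, first smooth and then general by density. By exactness of $\Theta$ and the identity used in Lemma \ref{lem:Holder_constant},
\[
\mathscr U_{\Omega}(S)=\mathscr U_{(f^{n_1})_*\Theta}\Big(S\wedge\bigwedge_{i=2}^{\ell}(f^{n_i})_*\omega\Big)=\mathscr U_{\Theta}\Big((f^{n_1})^*S\wedge\bigwedge_{i=2}^{\ell}(f^{n_i-n_1})_*\omega\Big).
\]
Since $n_i\le n_1$, the forms $(f^{n_i-n_1})_*\omega=(f^{n_1-n_i})^*\omega$ are pull-backs. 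The argument of $\mathscr U_\Theta$ is therefore $(f^{n_1})^*(S)\wedge\bigwedge_{i\ge2}(f^{m_i})^*\omega$ with $0\le m_i\le n_1$. This is an exact current; writing $S=S^+-S^-$, its $*$-norm is bounded by the mass of $(f^{n_1})^*(S^\pm)\wedge\bigwedge_i (f^{m_i})^*\omega$.

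The key step is to bound this mass by $C\delta^{n_1}$. Its bidegree is at most $k-p$ in the relevant sense, i.e. it lies in the range where Lemma \ref{lem:pull-back-mass}, or more precisely its dual version for $f^{-1}$ together with \eqref{eq:growth-*norm}, gives growth $\delta_q^{n}\le\delta^n$ with $q\ne p$. Here the condition $\ell+j-1\le k-p$ ensures that after wedging with $\omega$ we still avoid degree $p$ of the main degree. Concretely, I would pair with smooth positive closed forms, which is cohomological, and apply the induction of Lemma \ref{lem:pull-back-mass}. Call this mass bound $M_n\le C\delta^{n_1}$.

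With this bound, the H\"older estimate follows as in Lemma \ref{lem:Holder_constant}. We get $|\mathscr U_\Omega(S)|\le M_n^{1-\gamma}\|\cdots\|_{\mathcal C^{-2}}^{\gamma}$, and the $\mathcal C^{-2}$ norm of a wedge of $S$ with smooth pull-backs is at most $A^{n_1}\|S\|_{\mathcal C^{-2}}$, since the $\mathcal C^2$ norms of $(f^{m})^*\omega$ and $f^*$ grow at most like $A^m$. Combined with $M_n^{1-\gamma}\le M_n$, this yields $C_{\gamma,\ell}A^{n_1\gamma}\delta^{n_1}$.

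The main obstacle is the mass estimate with the correct exponent $\delta^{n_1}$ rather than a product of growth rates. My first attempt is to factor out $(f^{n_\ell})^*$ successively, as in the proof of Lemma \ref{lem:pull-back-mass}, using log-concavity to keep each intermediate bidegree away from $p$.
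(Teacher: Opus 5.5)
Your argument is correct, but it is organized differently from the paper's.

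\textbf{The paper's route.} The paper never writes a global duality formula. It sets $R_1=(f^{n_1-n_2})_*\Theta$ and $R_s=(f^{n_s-n_{s+1}})_*(\omega\wedge R_{s-1})$ with $n_{\ell+1}=0$, so that $R_\ell=\Omega_{n_1,\dots,n_\ell}(\Theta)$. It then applies Lemmas~\ref{lem:Holder_constant} and~\ref{lem:product_superpotential} alternately. At each step the H\"older constant picks up a factor $A^{(n_s-n_{s+1})\gamma}(c\,\delta_{k-j-s+2}^{\,n_s-n_{s+1}})^{1-\gamma}$. The hypothesis $\ell+j-1\le k-p$ puts all indices in $\{p+1,\dots,k\}$, and the exponents telescope to $n_1$.

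\textbf{Your route.} You unroll everything into the single identity $\mathscr U_\Omega(S)=\mathscr U_\Theta(Y)$, with $Y=(f^{n_1})^*S\wedge\bigwedge_{i\ge2}(f^{n_1-n_i})^*\omega$. You then use the H\"older property of $\mathscr U_\Theta$ only once, in the homogeneous form $|\mathscr U_\Theta(Y)|\le\|Y\|_*^{1-\gamma}\|Y\|_{\mathcal C^{-2}}^{\gamma}$. The telescoping must therefore take place inside a mixed mass estimate. That estimate holds, but you should state it precisely rather than ``in the relevant sense''. Write $S=S^+-S^-$ with $S^\pm$ positive closed, and let $Y^\pm$ be the corresponding pieces of $Y$. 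Then
\[
\|Y^\pm\|=\Big\langle S^\pm,\,(f^{n_1})_*(\omega^{j-1})\wedge\bigwedge_{i\ge2}(f^{n_i})_*\omega\Big\rangle\lesssim\|S^\pm\|\,\Big\|(f^{n_1})_*(\omega^{j-1})\wedge\bigwedge_{i\ge2}(f^{n_i})_*\omega\Big\|.
\]
The last form is a wedge of $j+\ell-2<k-p$ push-forwards of $\omega$ with largest exponent $n_1$. So Lemma~\ref{lem:pull-back-mass} applied to $f^{-1}$ gives $\|Y\|_*\lesssim\delta^{n_1}\|S\|_*$. Equivalently, you can peel off pull-backs one at a time; the intermediate bidegrees are $k-j+1,\dots,k-j-\ell+2$, all $\ge p+1$.

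\textbf{A small correction.} The naive $\mathcal C^{-2}$ bound is $A^{n_1+n_2+\cdots+n_\ell}\le(A^{\ell})^{n_1}$, not $A^{n_1}$. You can recover $A^{n_1}$ by nesting the pull-backs as
\[
(f^{n_1-n_2})^*\big[\omega\wedge(f^{n_2-n_3})^*[\cdots\omega\wedge(f^{n_\ell})^*S]\big],
\]
which costs only $A_1^{\ell-1}A^{n_1}$. At that point your computation is exactly the paper's recursion unrolled. Either version suffices, since downstream only the independence of $A$ from $\gamma$ is used.

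\textbf{Comparison.} The paper's route reuses a one-step lemma and gets the sharp base $A$ automatically. Yours makes transparent that only $\|Y\|_*$ and $\|Y\|_{\mathcal C^{-2}}$ enter, at the price of having to prove the mixed mass bound explicitly.
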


\begin{proof}
Define recursively
\[
R_1=(f^{n_1-n_2})_*\Theta
\quad
\text{and}
\quad
R_s=(f^{n_s-n_{s+1}})_*(\omega\wedge R_{s-1})
\quad (2\le s\le {\ell}),
\]
where we also set $n_{{ \ell}+1}:=0$ for convenience.
Then $R_{\ell}=\Omega_{n_1,\dots,n_{ \ell}}(\Theta)$.

Applying Lemma~\ref{lem:Holder_constant} and
Lemma~\ref{lem:product_superpotential} inductively,
we see that $\mathscr U_{R_{ \ell}}$ is $(\widetilde M_{ \ell}, \gamma)$-H\"older-continuous, where
\[
\widetilde
{M}_{ \ell}
=
 A_1^{({\ell}-1)\gamma} A^{n_1\gamma}
c^{(1-\gamma){\ell}}
\left(
\prod_{s=1}^{{\ell}}
\delta_{k-j-s+2}^{\,n_s-n_{s+1}}
\right)^{1-\gamma}.
\]
Since ${\ell}+j-1\le k-p$, the indices in the product belong to $\{p+1, \dots, k\}$.
Hence
we have
$\delta_{k-j-s+2}<\delta$ for all $s$, and
\[
\prod_{s=1}^{{\ell}}
\delta_{k-j-s+2}^{\,n_s-n_{s+1}}
\le
\delta^{\,n_1}.
\]
Thus, we have
\[
\widetilde
{M}_{\ell}
\le
 A_1^{({\ell}-1)\gamma} A^{n_1\gamma}
c^{(1-\gamma){\ell}}
\delta^{n_1}.
\]

Finally, since
$\Omega'_{n_1,\dots,n_{\ell}}(\Theta)=\omega\wedge R_{\ell}$,
a further application of Lemma~\ref{lem:product_superpotential}
gives the desired estimate.
\end{proof}

\subsection{Decay estimates outside the support of \texorpdfstring{$T^+$}{}}

  Recall that $T^+$ is the Green current of $f$, and that $d_p$ and $\delta$ are 
  the main and an auxiliary dynamical degrees.
Set $q:= k - p$ and
let $\chi$ be a smooth cut-off function supported outside $\supp T^+$.
We also fix $0<\gamma \leq 1$.
The goal of this section is to prove the following proposition, giving a bound for the
  $\bigstar$-norm of
\[\Omega'_{n_1,\dots, n_q}(dd^c\chi)=\Omega_{n_1,\dots, n_q,0}(dd^c\chi)=\Omega_{n_1,\dots, n_q}(dd^c\chi)\wedge\omega.
\]
Unless specified,
the constant appearing below will depend on $\chi$ and $\gamma$, but not on the integers
$n_1,\dots, n_q$.

\begin{proposition}
    \label{pro:*-mass(k-p,k-p)} 
There exist constants $A>1$ (independent of $\gamma$)
and
$C>0$
  such that
\[
\bigg\Vert\Omega'_{n_1,\dots,n_q}(dd^c\chi)\bigg\Vert^{\bigstar}\leq
C A^{n_1 \gamma}\delta^{n_1}
\]  
for all integers
$n_1\geq n_2\geq\dots \geq n_q\geq 0$. 
\end{proposition}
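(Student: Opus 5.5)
The plan is to split $dd^c\chi$ into positive pieces supported away from $\supp T^+$, and then estimate each piece by pulling back by $f^{n_q}$ and applying Theorem~\ref{thm:new-speed} once. The vanishing of the limit term comes from the invariance of $T^+$.

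\textbf{Step 1 (positive decomposition).} Apply Lemma~\ref{lem:phi_n^+} with $K=\supp T^+$, $s=0$ and $\Phi=\chi$. This gives smooth positive $(1,1)$-forms $\Phi_1,\Phi_2$ with $dd^c\chi=\Phi_1-\Phi_2$ and $\supp\Phi_j\cap\supp T^+=\emptyset$. Since the forms $(f^{n_s})_*\omega$ and $\omega$ are positive and closed, we get
\[
\Omega'_{n_1,\dots,n_q}(dd^c\chi)=\Omega'_{n_1,\dots,n_q}(\Phi_1)-\Omega'_{n_1,\dots,n_q}(\Phi_2),
\]
and both terms on the right are positive. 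By definition of $\|\cdot\|^\bigstar$, it then suffices to bound
\[
\|\Omega'_{n_1,\dots,n_q}(\Phi_j)\wedge\Phi\|
\]
uniformly in $\Phi\in\mathscr H_{p-1}$ by $CA^{n_1\gamma}\delta^{n_1}$.

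\textbf{Step 2 (reduction to a pairing).} The current $\Omega'_{n_1,\dots,n_q}(\Phi_j)\wedge\Phi$ is positive, so its mass is its pairing with $1$. Pushing everything through $(f^{n_q})^*$, we write this mass as
\[
\langle (f^{n_q})^*S,\,R\rangle,\qquad S:=\omega\wedge\Phi\in\mathscr C_p,\ \|S\|_*\lesssim 1,
\]
where
\[
R:=(f^{n_1-n_q})_*\Phi_j\wedge(f^{n_2-n_q})_*\omega\wedge\cdots\wedge(f^{n_{q-1}-n_q})_*\omega\wedge\omega ,
\]
a $(k-p,k-p)$-current. Let $c=c_S$ be the constant for which $d_p^{-n}(f^n)^*S\to cT^+$. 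The key observation is that $\langle T^+,R\rangle=0$. Indeed, using $(f^m)^*T^+=d_p^mT^+$, this pairing reduces to integrals of $T^+$ wedge smooth positive forms against $\Phi_j$. Those integrals vanish because $\Phi_j$ vanishes near $\supp T^+$; positivity lets us make this rigorous by approximation. Hence
\[
\langle (f^{n_q})^*S,R\rangle=d_p^{n_q}\Big\langle \frac{(f^{n_q})^*S}{d_p^{n_q}}-cT^+,R\Big\rangle .
\]
Theorem~\ref{thm:new-speed} then bounds this by
\[
B\,\delta^{n_q}\big(\|dd^cR\|^\bigstar+\|\mathscr U_{dd^cR}\|_{\gamma,*}+\|R\|\big).
\]

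\textbf{Step 3 (norms of $R$).} We need each of the three norms of $R$ to be $\lesssim A^{n_1\gamma}\delta^{n_1-n_q}$.

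For the mass $\|R\|$, this follows from the growth of push-forwards on bidegrees strictly above $p$, in the same way as Lemma~\ref{lem:pull-back-mass} (applied to $f^{-1}$) and \eqref{eq:growth-*norm}; here one uses $\Phi_j\lesssim\omega$.

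For $dd^cR$, note that $dd^cR=\Omega'_{n_1-n_q,\dots,n_{q-1}-n_q}(dd^c\Phi_j)$ with a $dd^c$-exact smooth $(2,2)$-form $\Theta=dd^c\Phi_j$, and $(q-1)+2-1\le k-p$. Proposition~\ref{prop:M_q_general} therefore gives $\|\mathscr U_{dd^cR}\|_{\gamma,*}\lesssim A^{(n_1-n_q)\gamma}\delta^{n_1-n_q}$, after rescaling $\Theta$ so that $\|\mathscr U_\Theta\|_{\gamma,*}\le1$.

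For $\|dd^cR\|^\bigstar$, split the smooth form $dd^c\Phi_j$ as the difference of two multiples of $\omega^2$ as in Lemma~\ref{lem:phi_n^+}, and argue as in Lemma~\ref{lem:growth}. This gives $\|dd^cR\|^\bigstar\lesssim\delta^{n_1-n_q}$.

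Multiplying by $\delta^{n_q}$ from Step 2 yields the claimed bound $CA^{n_1\gamma}\delta^{n_1}$.

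\textbf{Main obstacle.} I expect the delicate points to be two. The first is justifying $\langle T^+,R\rangle=0$, and more generally the pairing identity of Step 2, for the non-smooth product $T^+\wedge(\text{pushed forms})$; this is where the support condition from Lemma~\ref{lem:phi_n^+} and the invariance of $T^+$ must be combined carefully. The second is checking that each growth estimate for $R$ produces exactly $\delta^{n_1-n_q}$ rather than a power of $d_p$. This requires that all the bidegrees involved stay strictly on one side of $p$, which is where the constraint $q=k-p$ enters.
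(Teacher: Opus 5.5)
Your Steps 1 and 2 are the paper's first reduction (Lemma~\ref{lem:A_first_reduction}, with your $R$ equal to $R_q^\pm$). Your bounds on $\|R\|$ and on $\|\mathscr U_{dd^cR}\|_{\gamma,*}$ match Lemmas~\ref{lem:D-mass} and~\ref{lem:C_holder_control}.

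The gap is the claim $\|dd^cR\|^\bigstar\lesssim\delta^{n_1-n_q}$. Dominating the pieces of $dd^c\Phi_j$ by $C\omega^2$ and arguing as in Lemma~\ref{lem:growth} uses only masses and cohomology, and at the last push-forward it meets the critical bidegree. Indeed
\[
dd^cR=(f^{\,n_{q-1}-n_q})_*(Y)\wedge\omega,\qquad Y:=(f^{\,n_1-n_{q-1}})_*(dd^c\Phi_j)\wedge\bigwedge_{i=2}^{q-1}(f^{\,n_i-n_{q-1}})_*(\omega),
\]
and $Y$ has bidegree $(k-p,k-p)$. In that bidegree, Lemma~\ref{lem:growth} for $f^{-1}$ gives only the factor $d_p^{\,n_{q-1}-n_q}$, not $\delta^{\,n_{q-1}-n_q}$.

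This is not a bookkeeping loss; the method itself cannot do better. Take $n_1=\dots=n_{q-1}=N$ and $n_q=0$. The dominating current is $(f^N)_*(\omega^{q})\wedge\omega$, and testing it against $\Phi=\omega^{p-1}\in\mathscr H_{p-1}$ gives mass $\langle (f^N)_*\omega^{k-p},\omega^p\rangle\sim d_p^N$. So your argument yields at best $\delta^{n_q}\delta^{n_1-n_{q-1}}d_p^{\,n_{q-1}-n_q}$, which is not $O(A^{n_1\gamma}\delta^{n_1})$ once $q\ge 2$.

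Contrary to your last paragraph, the bidegrees do not stay on one side of $p$ here, because $dd^cR$ has bidegree $k-p+1$. Proposition~\ref{prop:M_q_general} avoids this only for the super-potential: a super-potential of a current of bidegree $r$ is evaluated on currents of bidegree $k-r+1$, so the relevant rate there is $\delta_{p+1}$ rather than $d_p$.

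What is missing is a second use of the support condition, and this forces a recursion.
\begin{itemize}
\item Decompose $dd^c\Phi_j=\beta_2^+-\beta_2^-$ again by Lemma~\ref{lem:phi_n^+}, with $\beta_2^\pm$ positive and supported off $\supp T^+$.
\item Then $dd^cR=(f^{\,n_{q-1}-n_q})_*(R'^+-R'^-)\wedge\omega$, where $R'^\pm$ is $Y$ with $dd^c\Phi_j$ replaced by $\beta_2^\pm$. These currents are positive, not closed, and supported off the totally invariant set $\supp T^+$, so $\langle T^+,R'^\pm\rangle=0$.
\item Theorem~\ref{thm:new-speed} with $S=\Phi\wedge\omega$ and $n=n_{q-1}-n_q$ then gives
\[
\|dd^cR\|^\bigstar\lesssim \delta^{\,n_{q-1}-n_q}\sum_\pm\big(\|dd^cR'^\pm\|^\bigstar+\|\mathscr U_{dd^cR'^\pm}\|_{\gamma,*}+\|R'^\pm\|\big).
\]
\item Iterate $q-1$ times until the innermost form is a fixed smooth form $\beta_q^I$. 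The factors $\delta^{n_s-n_{s+1}}$ telescope to $\delta^{n_1-n_q}$.
\end{itemize}
This is Lemma~\ref{lem:B_recursive_star}. It is precisely where the $\bigstar$-version of Theorem~\ref{thm:new-speed} for non-closed $R$ is indispensable. At every level of the recursion you also need the Hölder and mass bounds for the intermediate currents; your Step~3 arguments supply these.
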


To prove Proposition~\ref{pro:*-mass(k-p,k-p)},
we will first introduce some auxiliary currents.
By Lemma~\ref{lem:phi_n^+}, we can write
$dd^c\chi=\beta_1^+-\beta_1^-$,
where 
$\beta_1^\pm$ are smooth positive forms with
$\supp\beta_1^\pm\cap\supp T^+=\emptyset$.
Applying the same construction inductively,
for every $1\leq s \leq q$
we construct $2^s$
smooth positive forms
$\beta^I_s$,
where $I\in \{+,-\}^s$ is a multi-index of length
$s$,
such that
\[dd^c \beta_{s}^{I}=\beta_{s+1}^{I,+}-\beta_{s+1}^{I,-}
\quad \text{and}
\quad
 \supp\beta_s^I\cap\supp T^+=\emptyset
 \quad \text{for every  $1\leq s\leq q$  and $I$ with $|I|=s$}.\]
For $1\le s\le q$ 
and $I$ with $|I|=q+1-s$
define
\[
R_s^{I}
:=
\Omega'_{n_1-n_s,\dots,n_{s-1}-n_s}(\beta_{q+1-s}^{I})
=
(f^{\,n_1-n_s})_*(\beta_{q+1-s}^{I})
\wedge
\bigwedge_{i=2}^s
(f^{\,n_i-n_s})_*(\omega).
\]
In particular, $R_1^{I}=\beta_q^{I}$ 
for every $I$ with $|I|=q$,
$R_q^{{\pm}}=\Omega'_{n_1-n_q,\dots,n_{q-1}-n_q}(\beta_1^{{\pm}})$,
and $\supp R_s^{I}\cap\supp T^+=\emptyset$ for every $s$ and $I$ with $|I|=q+1-s$.
Moreover, all the $R_s^I$ are positive.
We also set
\begin{equation}
\label{eq:constantM}
M_s := \max_{|I|=q+1-s}
\|\mathscr U_{dd^c\beta_{q+1-s}^I}\|_{\gamma,{*}}
\quad
\text{ and }
\quad
M:= \max_s M_s.
\end{equation}

\begin{lemma}
\label{lem:A_first_reduction}
There exists a constant
$B>0$ such that
\[
\big\|
\Omega'_{n_1,\dots,n_q}(dd^c\chi)
\big\|^{\bigstar}
\le
B\,\delta^{n_q}
\sum_{\circ \in \{+,-\}}
\Big(
\|dd^c R_q^\circ\|^{\bigstar}
+
\|\mathscr U_{dd^c R_q^\circ}\|_{\gamma,{*}}
+ \|R_q^\circ\|
\Big).
\]
\end{lemma}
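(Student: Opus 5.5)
The plan is to factor out the last push-forward $(f^{n_q})_*$, reduce the $\bigstar$-norm to pairings of the positive currents $R_q^\pm$ against pull-backs of positive closed $(p,p)$-forms, and then apply Theorem~\ref{thm:new-speed}. The disjointness of the supports of $R_q^\pm$ and $T^+$ kills the main term of order $d_p^{n_q}$, which leaves the $\delta^{n_q}$ error.

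\textbf{Step 1 (factorisation).} Since $f$ is an automorphism, push-forward commutes with wedge products. Using $n_q\le n_i$ for all $i$, we get
\[
\Omega_{n_1,\dots,n_q}(dd^c\chi)=(f^{n_q})_*\Big((f^{n_1-n_q})_*(dd^c\chi)\wedge\bigwedge_{i=2}^{q}(f^{n_i-n_q})_*(\omega)\Big).
\]
The last factor in the wedge product is $(f^{0})_*\omega=\omega$. Substituting $dd^c\chi=\beta_1^+-\beta_1^-$, the bracket equals $R_q^+-R_q^-$ by the definition of $R_q^{\pm}$. Hence
\[
\Omega'_{n_1,\dots,n_q}(dd^c\chi)=(f^{n_q})_*(R_q^+)\wedge\omega-(f^{n_q})_*(R_q^-)\wedge\omega .
\]
Both terms are positive $(k-p+1,k-p+1)$-currents, since the $R_q^\pm$ are positive and $f^{n_q}$ is a biholomorphism.

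\textbf{Step 2 (testing the $\bigstar$-norm).} We use this decomposition in the definition \eqref{def:bigstar_norm}. For $\Phi\in\mathscr H_{p-1}$, the form $\Phi$ is smooth, positive and closed, so
\[
\|(f^{n_q})_*(R_q^\pm)\wedge\omega\wedge\Phi\|=\langle (f^{n_q})_*R_q^\pm,\omega\wedge\Phi\rangle=\langle R_q^\pm,(f^{n_q})^*(S_\Phi)\rangle,\qquad S_\Phi:=\omega\wedge\Phi .
\]
The form $S_\Phi$ is smooth, positive and closed of bidegree $(p,p)$. Its mass is controlled cohomologically by $\|\Phi\|\le1$, so $\|S_\Phi\|_*=\|S_\Phi\|\lesssim 1$ uniformly in $\Phi$. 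It is therefore enough to bound $\langle (f^{n_q})^*S_\Phi,R_q^\pm\rangle$ uniformly over $\Phi\in\mathscr H_{p-1}$.

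\textbf{Step 3 (applying Theorem~\ref{thm:new-speed}).} We apply the theorem with $S=S_\Phi$, $R=R_q^\pm$ and $n=n_q$. The currents $R_q^\pm$ are smooth positive $(k-p,k-p)$-forms, so $\|R_q^\pm\|<\infty$. The quantities $\|dd^cR_q^\pm\|^\bigstar$ and $\|\mathscr U_{dd^cR_q^\pm}\|_{\gamma,*}$ are exactly those on the right-hand side of the claim; if either is infinite there is nothing to prove. Multiplying \eqref{eq:Holder_Speed} by $d_p^{n_q}$ gives
\[
\big|\langle (f^{n_q})^*S_\Phi,R_q^\pm\rangle - d_p^{n_q}c_\Phi\langle T^+,R_q^\pm\rangle\big|\le B\,\delta^{n_q}\|S_\Phi\|_*\big(\|dd^cR_q^\pm\|^\bigstar+\|\mathscr U_{dd^cR_q^\pm}\|_{\gamma,*}+\|R_q^\pm\|\big).
\]
The key point is that $\supp R_q^\pm\cap\supp T^+=\emptyset$, which was noted when the $R_s^I$ were constructed. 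Since $R_q^\pm$ is smooth, $\langle T^+,R_q^\pm\rangle=0$, so the main term of order $d_p^{n_q}$ vanishes. In particular we never need to control the size of $c_\Phi$.

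\textbf{Conclusion and main difficulty.} Summing over $\circ\in\{+,-\}$ and taking the supremum over $\Phi$ gives the lemma, after absorbing $\sup_\Phi\|S_\Phi\|_*$ into $B$. The constant $B$ from Theorem~\ref{thm:new-speed} is independent of $S$ and $R$, hence of $\Phi$ and of the $n_i$. The only delicate points are the bookkeeping in Step 1 (checking that exactly $(f^{n_q})_*$ factors out and that the remaining factor is $R_q^+-R_q^-$) and the vanishing of $\langle T^+,R_q^\pm\rangle$. The genuine analytic work is already contained in Theorem~\ref{thm:new-speed}, and that is precisely why the non-closed currents $R_q^\pm$ and the $\bigstar$-norm were needed.
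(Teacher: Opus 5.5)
Your proposal is correct and follows the paper's proof. The paper uses the same splitting $dd^c\chi=\beta_1^+-\beta_1^-$ and the same identity $\langle \Omega'_{n_1,\dots,n_q}(\beta_1^\pm),\Phi\rangle=\langle (f^{n_q})^*(\Phi\wedge\omega),R_q^\pm\rangle$ for $\Phi\in\mathscr H_{p-1}$. It then applies Theorem~\ref{thm:new-speed} with $S=\Phi\wedge\omega$, $R=R_q^\pm$, drops the main term because $\supp R_q^\pm\cap\supp T^+=\emptyset$, and uses $\|\Phi\wedge\omega\|_*=\|\Phi\|\le 1$ exactly, so nothing needs to be absorbed into $B$.
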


\begin{proof}
 Let $\mathscr{H}_{k-q-1}$
  denote the set of smooth positive closed 
  { $(k-q-1,k-q-1)$-}forms $\Phi$ with $\|\Phi\|\le1$.
By 
the definition of the $\bigstar$-norm 
and of the forms $\beta^{\pm}_1$ as above,
we have
\[
\big\|\Omega'_{n_1,\dots,n_q}(dd^c\chi)\big\|^{\bigstar}
\le
\sup_{\Phi\in\mathscr{H}_{k-q-1}}
\big\langle
\Omega'_{n_1,\dots,n_q}(\beta_1^+),\Phi
\big\rangle 
+
\sup_{\Phi\in\mathscr{H}_{k-q-1}}
\big\langle
\Omega'_{n_1,\dots,n_q}(\beta_1^-),\Phi
\big\rangle.
\]
Since the two terms can be treated in a similar way, it suffices to estimate the first
term.
Fix $\Phi\in \mathscr{H}_{k-q-1}$.
Using the definitions 
\eqref{eq:def-omega} and \eqref{eq:def-omegaprime},
 the corresponding 
component of the first
 term is equal to
\[
\big\langle
\Omega_{n_1,\dots,n_q}(\beta_1^+),\Phi\wedge\omega
\big\rangle
=
\big\langle
(f^{n_q})^*(\Phi\wedge\omega),R_q^+
\big\rangle.
\]
Since $\supp R_q^+\cap\supp T^+=\emptyset$, Theorem~\ref{thm:new-speed}
applied with $S:=\Phi\wedge\omega$ and $R:=R_q^+$
gives
a constant $B>0$ (depending only on $\gamma$)
such that
\[
\big|\big\langle
(f^{n_q})^*(\Phi\wedge\omega),R_q^+
\big\rangle\big|
\le
B\,\delta^{n_q}\|\Phi\wedge\omega\|_*
\Big(
\|dd^c R_q^+\|^{\bigstar}
+
\|\mathscr U_{dd^c R_q^+}\|_{\gamma,{*}}
+ \|R_q^+\|
\Big),
\]
 as the term $\langle R^+_q, T^+\rangle$
vanishes.
For $\Phi\in\mathscr{H}_{k-q-1}$
 we have $\|\Phi\wedge\omega\|_*=\|\Phi\wedge\omega\|=\|\Phi\|\le1$.
Taking the supremum over $\Phi\in\mathscr{H}_{k-q-1}$,
we obtain
\[
\sup_{\Phi\in\mathcal {H}_{k-q-1}}
\big\langle
\Omega_{n_1,\dots,n_q}(\beta_1^+),\Phi\wedge\omega
\big\rangle
\le
B\,\delta^{n_q}
\Big(
\|dd^c R_q^+\|^{\bigstar}
+
\|\mathscr U_{dd^c R_q^+}\|_{\gamma,{*}}+{\|R_q^+\|}
\Big).
\]
A similar 
estimate holds with $\beta_1^-$ in place of $\beta_1^+$, and the 
assertion follows.
\end{proof}

\begin{lemma}
\label{lem:B_recursive_star}
 We have
\[
\|dd^c R_q^{\pm}\|^{\bigstar}
\le
B^{q-1}\delta^{n_1-n_q}
\sum_{|I|=q-1}
\|dd^c R_1^{\pm,I}\|^{\bigstar}
+
\sum_{s=1}^{q-1}
B^{q-s}\delta^{n_s-n_q}
\sum_{|I|=q-s}
\Big(\|\mathscr U_{dd^c R_s^{\pm, I}}\|_{\gamma,{*}}
+ \|R_s^{\pm,I}\|
\Big),
\]
where $B$ is as in Lemma \ref{lem:A_first_reduction}.
\end{lemma}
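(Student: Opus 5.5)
The plan is to prove a one-step recursion and then iterate it $q-1$ times. The one-step statement is the following: for every $2\le s\le q$ and every $I$ with $|I|=q+1-s$,
\begin{equation*}
\|dd^c R_s^{I}\|^{\bigstar}\le B\,\delta^{n_{s-1}-n_s}\sum_{\circ\in\{+,-\}}\Big(\|dd^c R_{s-1}^{I,\circ}\|^{\bigstar}+\|\mathscr U_{dd^c R_{s-1}^{I,\circ}}\|_{\gamma,*}+\|R_{s-1}^{I,\circ}\|\Big).
\end{equation*}
Applying this for $s=q,q-1,\dots,2$ and substituting each time into the $\|dd^c\cdot\|^{\bigstar}$ term produces the claimed bound. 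The $\|dd^c R_1^{\pm,I}\|^{\bigstar}$ term survives at the bottom with factor $B^{q-1}\delta^{n_1-n_q}$. At level $s$, the super-potential and mass terms are picked up with factor $B^{q-s}\prod_{t=s+1}^{q}\delta^{n_{t-1}-n_t}=B^{q-s}\delta^{n_s-n_q}$, since the exponents telescope.

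\textbf{Step 1: an identity for $dd^c R_s^I$.} Every factor $(f^{n_i-n_s})_*(\omega)$ and the extra $\omega$ in $\Omega'$ are closed. Hence $dd^c$ falls only on the first factor, and there $dd^c\beta_{q+1-s}^I=\beta_{q+2-s}^{I,+}-\beta_{q+2-s}^{I,-}$. Next I factor out $(f^{n_{s-1}-n_s})_*$, using $n_i-n_s=(n_i-n_{s-1})+(n_{s-1}-n_s)$ and the fact that $(f^m)_*$ is multiplicative on wedge products for an automorphism. This should give
\begin{equation*}
dd^c R_s^{I}=(f^{n_{s-1}-n_s})_*\big(R_{s-1}^{I,+}-R_{s-1}^{I,-}\big),
\end{equation*}
where the factor $(f^{n_{s}-n_s})_*\omega=\omega$ plays the role of the final $\omega$ in $\Omega'$. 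I need to check the indexing against the definition of $R_s^I$ here; if a surplus $\omega$ appears, it is harmless, being a fixed smooth closed factor that can be absorbed into the test forms below.

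\textbf{Step 2: estimate the $\bigstar$-norm via the speed of convergence.} The two currents $(f^{m})_*R_{s-1}^{I,\pm}$, with $m:=n_{s-1}-n_s$, are positive. So, exactly as in the proof of Lemma~\ref{lem:A_first_reduction}, they form an admissible decomposition in \eqref{def:bigstar_norm}. This gives
\begin{equation*}
\|dd^cR_s^I\|^{\bigstar}\le\sum_{\circ}\sup_{\Phi}\big\langle (f^{m})^*\Phi,\,R_{s-1}^{I,\circ}\big\rangle,
\end{equation*}
where $\Phi$ runs over smooth positive closed forms of mass at most $1$ (wedged with $\omega$ as needed to reach bidegree $(p,p)$). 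I then apply Theorem~\ref{thm:new-speed} with $S:=\Phi$ (or $\Phi\wedge\omega$) and $R:=R_{s-1}^{I,\circ}$, for which $\|S\|_*\le 1$. Since $\supp R_{s-1}^{I,\circ}\cap\supp T^+=\emptyset$, the term $c\langle T^+,R\rangle$ vanishes. After multiplying by $d_p^{m}$, this yields $B\delta^{m}\big(\|dd^cR_{s-1}^{I,\circ}\|^{\bigstar}+\|\mathscr U_{dd^cR_{s-1}^{I,\circ}}\|_{\gamma,*}+\|R_{s-1}^{I,\circ}\|\big)$, with the same constant $B$ as in Lemma~\ref{lem:A_first_reduction}. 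Taking the supremum over $\Phi$ proves the one-step recursion.

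\textbf{Main obstacle.} The delicate point is bookkeeping rather than analysis. I must verify the bidegrees: $R_{s-1}^{I,\circ}$ must be a $(k-p,k-p)$-current after the appropriate wedge with $\omega$'s, so that Theorem~\ref{thm:new-speed} applies. I must also confirm that the finiteness hypotheses $\|dd^cR\|^{\bigstar}<\infty$ and $\|\mathscr U_{dd^cR}\|_{\gamma,*}<\infty$ hold at every level. Both follow because the $R_{s-1}^{I,\circ}$ are smooth, and Proposition~\ref{prop:M_q_general} controls the super-potentials.
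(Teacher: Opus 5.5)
Your proposal is correct and follows the paper's proof. The paper proves exactly your one-step recursion, indexed there as $R_{s+1}^I\to R_s^{I,\pm}$: it tests against $\Phi\in\mathscr H_{k-q-1}$, applies Theorem~\ref{thm:new-speed} with $S=\Phi\wedge\omega$ and $R=R_s^{I,\pm}$, uses $\langle T^+,R\rangle=0$, and then unrolls the recursion. The surplus factor you anticipated does occur, since $dd^c R_{s+1}^{I}=(f^{n_s-n_{s+1}})_*\big(R_s^{I,+}-R_s^{I,-}\big)\wedge\omega$, and it is absorbed exactly as you suggest, through the test form $\Phi\wedge\omega$.
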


\begin{proof}
It is enough to 
 prove that, for every $1\le s\le q-1$
 and $I$ with $|I|=q-s$,
 we have
\begin{equation}\label{eq:one_step_star}
\|dd^c R_{s+1}^{I}\|^{\bigstar}
\le
B\,\delta^{\,n_s-n_{s+1}}
\sum_{\circ \in \{+,-\}}
\Big(
\|dd^c R_s^{I,\circ}\|^{\bigstar}
+
\|\mathscr U_{dd^c R_s^{I,\circ}}\|_{\gamma,{*}}
+
\|R_s^{I,\circ}\|
\Big),
\end{equation}
where  $B$ is as in Lemma~\ref{lem:A_first_reduction}.
The assertion then follows developing this recursion formula.

\medskip

By definition of the $\bigstar$-norm and since $dd^c R_{s+1}^{I}$ is supported outside $\supp T^+$, we can argue as in Lemma~\ref{lem:A_first_reduction}:
for $\Phi\in\mathscr{H}_{k-q-1}$
 we write
\[
\langle dd^c R_{s+1}^{I},\Phi\rangle
=
\big\langle (f^{\,n_s-n_{s+1}})^*(\Phi\wedge\omega),\,R_s^{I,+}\big\rangle-\big\langle (f^{\,n_s-n_{s+1}})^*(\Phi\wedge\omega),\,R_s^{I,-}\big\rangle.
\]
 Both terms in the right-hand side of the above expression
can be estimated in a similar
way; hence, we will bound the first one. For this,
we apply
Theorem~\ref{thm:new-speed}
with
$S:=\Phi\wedge\omega$ and $R:=R_s^{I,+}$, 
using that
$\|\Phi\wedge\omega\|_*\le1$.
Taking the supremum over $\Phi\in\mathscr{H}_{k-q-1}$ yields \eqref{eq:one_step_star}
and completes the proof.
\end{proof}

\begin{lemma}
\label{lem:C_holder_control}
There exist constants $A>1$ (independent of $\gamma$)
 and $C'>0$
such that for all $1\le s\le q$
{and $I$ with $|I|=q+1-s$}
we have
\[
\|\mathscr U_{dd^c R_s^{I}}\|_{\gamma,{*}}
\le
C' \, A^{n_1\gamma}\,\delta^{n_1-n_s}.
\]
\end{lemma}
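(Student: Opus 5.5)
The plan is to recognise $dd^c R_s^I$ as one of the currents $\Omega'$ already handled in Proposition \ref{prop:M_q_general}, and then read off the bound. Fix $1\le s\le q$ and $I$ with $|I|=q+1-s$. Set $m_i:=n_i-n_s\ge0$ for $1\le i\le s-1$, so that $m_1\ge\dots\ge m_{s-1}\ge0$.

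\textbf{Commuting $dd^c$ with the construction.} By definition,
\[
R_s^I=(f^{m_1})_*(\beta^I_{q+1-s})\wedge\bigwedge_{i=2}^{s-1}(f^{m_i})_*(\omega)\wedge\omega .
\]
Since $f$ is a biholomorphism, $dd^c$ commutes with $(f^{m})_*$. The forms $(f^{m_i})_*\omega$ and $\omega$ are closed. Hence $dd^c$ falls only on the first factor, and I will check that
\[
dd^c R_s^I=\Omega'_{m_1,\dots,m_{s-1}}\big(dd^c\beta^I_{q+1-s}\big).
\]

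\textbf{Applying Proposition \ref{prop:M_q_general} for $s\ge2$.} Here $\Theta:=dd^c\beta^I_{q+1-s}$ is a smooth $dd^c$-exact form of bidegree $(j,j)$ with $j=q+2-s$. I will take $\ell=s-1\ge1$ factors. Then $\ell+j-1=(s-1)+(q+2-s)-1=q=k-p$, so the dimensional hypothesis of the proposition holds exactly at the borderline.

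The proposition requires the normalisation $\|\mathscr U_\Theta\|_{\gamma,*}\le1$. I will therefore apply it to $\Theta/M$, where $M$ is the constant in \eqref{eq:constantM}. This constant is finite because each $dd^c\beta^I$ is smooth and exact, so its super-potential is Lipschitz, hence $\gamma$-H\"older for every $\gamma\le1$. Homogeneity of $\|\mathscr U_\cdot\|_{\gamma,*}$ then gives
\[
\|\mathscr U_{dd^c R_s^I}\|_{\gamma,*}\le M\,C_{\gamma,s-1}\,A^{(n_1-n_s)\gamma}\,\delta^{\,n_1-n_s}.
\]
Since $A>1$, we have $A^{(n_1-n_s)\gamma}\le A^{n_1\gamma}$.

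\textbf{The case $s=1$.} Here $R_1^I=\beta^I_q$, so $dd^cR_1^I=dd^c\beta^I_q$ is a fixed smooth exact form. Its Hölder constant is bounded by $M$, which is at most $M\,A^{n_1\gamma}\delta^0$.

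\textbf{Constants and the delicate point.} I will take $C':=M\max\{1,\max_{1\le\ell\le q-1}C_{\gamma,\ell}\}$, maximising over the finitely many multi-indices $I$. The constant $A$ is the one from Proposition \ref{prop:M_q_general}, which is independent of $\gamma$. The only delicate point is the bidegree bookkeeping: the inequality $\ell+j-1\le k-p$ must hold so that all degrees $\delta_{k-j-s'+2}$ appearing inside the proposition are the non-main ones, which are $<\delta$. The computation above shows it holds with equality, so I expect no genuine obstacle beyond verifying the commutation of $dd^c$ with push-forwards and wedge products with closed smooth forms.
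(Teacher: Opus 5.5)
Your proposal is correct and follows essentially the same route as the paper's proof. You identify $dd^c R_s^I=\Omega'_{n_1-n_s,\dots,n_{s-1}-n_s}(dd^c\beta^I_{q+1-s})$, treat $s=1$ directly through the constant $M$ of \eqref{eq:constantM}, and for $s\ge 2$ apply Proposition \ref{prop:M_q_general} with $\ell=s-1$, $j=q+2-s$ (so $\ell+j-1=q=k-p$), rescaling by $M$ and using $A^{(n_1-n_s)\gamma}\le A^{n_1\gamma}$; your justification that $M<\infty$ and the $\max\{1,\cdot\}$ in $C'$ (covering $q=1$) are harmless refinements of the paper's write-up.
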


\begin{proof}
By construction, we have
\[
dd^c R_s^{I}
=
\Omega'_{n_1-n_s,\dots,n_{s-1}-n_s}\!\big(dd^c\beta_{q+1-s}^{I}\big).
\]
 When 
$s=1$
{(and so $|I|=q$),}
we have $R_1^{I}=\beta_q^{I}$ which is smooth   and by \eqref{eq:constantM}  we obtain $\|\mathscr{U}_{dd^c R_1^{I}}\|_{\gamma,*}\leq M$.
For $2\leq s\leq q$ we
apply Proposition~\ref{prop:M_q_general} with 
$\ell := s-1$,
$j:= q+2-s$,
and
$\Theta := dd^c\beta_{q+1-s}^{I}$. 
It follows that there exist constants $A>1$
(independent of $\gamma$)
 and $C_{s-1}>1$ such that
$\mathscr U_{dd^c R_s^{I}}$ is $( \widetilde M_{s-1},\gamma)$-H\"older continuous with
\[
{\widetilde M_{s-1}}
\le
{M_s}\,C_{s-1}\,A^{(n_1-n_s)\gamma}\,\delta^{\,n_1-n_s}
\le
M\,C_{s-1}\,A^{n_1\gamma}\,\delta^{\,n_1-n_s},
\]
where {$M_s$ and $M$ are}
as in \eqref{eq:constantM}.
In particular,
\[
\|\mathscr U_{dd^c R_s^{I}}\|_{\gamma,{*}}
\le
M\,C_{s-1}\,A^{n_1\gamma}\,\delta^{\,n_1-n_s}.
\]
Setting $C':={ M}\cdot\max\{C_{1},\dots,C_{q-1}\}$ gives the assertion. 
\end{proof}

\begin{lemma}\label{lem:D-mass}
There exists a constant $C''$ such that 
for all $1\leq s \leq q$ and $I$
with $|I|=q+1-s$
we have
\[
\|R_s^I\|\leq  C'' \delta^{n_1-n_s}.
\]
\end{lemma}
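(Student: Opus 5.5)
Since each $R_s^I$ is a positive $(k-p,k-p)$-current (recall $q=k-p$, so bidegree of $R_s^I$ is $(q+1-s)+(s-1)=q$, i.e. $(k-p,k-p)$), its mass is $\|R_s^I\|=\langle R_s^I,\omega^{p}\rangle$. The plan is to bound the smooth positive form $\beta_{q+1-s}^I$ from above by a multiple of a power of $\omega$, and then reduce to the mass estimate of Lemma \ref{lem:pull-back-mass} applied to $f^{-1}$.

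First, since $\beta_{q+1-s}^I$ is a smooth positive $(q+1-s,q+1-s)$-form on the compact manifold $X$, there is a constant $c_0>0$ (depending only on the finitely many fixed forms $\beta^I_j$, hence independent of the $n_i$) such that $0\le \beta_{q+1-s}^I\le c_0\,\omega^{q+1-s}$. Push-forward by the automorphism $f^{n_1-n_s}$ preserves this inequality, and wedging with the positive closed smooth forms $(f^{n_i-n_s})_*(\omega)$ and with $\omega$ also preserves positivity of the difference (all the forms involved are smooth and positive, and we work with $(1,1)$-forms in the extra factors, so strong positivity issues do not arise). Hence
\[
\|R_s^I\|\le c_0\,\Big\|(f^{n_1-n_s})_*(\omega)^{\,q+1-s}\wedge\bigwedge_{i=2}^{s}(f^{n_i-n_s})_*(\omega)\wedge\omega\Big\|,
\]
i.e. the mass of a wedge product of $q+1$ push-forwards $(f^{m_i})_*(\omega)$ with exponents $m_i\in\{n_1-n_s,\dots,n_{s-1}-n_s,0\}$, the largest being $n_1-n_s$ (counted $q+1-s$ times), all ordered decreasingly.

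Second, I apply Lemma \ref{lem:pull-back-mass} to $f^{-1}$ in place of $f$, with $q+1=k-p+1$ factors. The point is that for $f^{-1}$ the dynamical degrees are $d_j(f^{-1})=d_{k-j}(f)$, so the main degree of $f^{-1}$ sits in index $k-p$, and products of $\ell\le k-p$ factors have growth controlled by $\delta$; the wedge we have has $q+1=k-p+1$ factors, one of which is $\omega$ itself (exponent $0$), which contributes no growth. More carefully, I will drop the last factor $\omega$ by noting the mass of the remaining $(q,q)$-form $\Psi$ against $\omega^{p}$ is comparable to its mass times $\omega$... actually rather I redo the induction of Lemma \ref{lem:pull-back-mass} directly: the inductive step uses $\|(f^{m})_*\Omega\|\le c\,\delta_{j}^{m}\|\Omega\|$ for a positive closed $(j,j)$-form $\Omega$ with $j\le q$, and push-forward on bidegree $(j,j)$ grows like $d_{k-j}(f)$ for $f^*$ on bidimension, i.e. like $\delta_{k-j}$, where $k-j\ge p$ with equality only for $j=q$. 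The main obstacle is exactly this index bookkeeping: one must check that at every inductive step the bidegree $j$ of the current being pushed forward by a positive number of iterates satisfies $j\le q-1$ (so $\delta_{k-j}<\delta$), while the only full $(q,q)$-stage involves the final $\omega$ factor pushed forward by $f^{0}$. Since the largest exponent $n_1-n_s$ appears on a block of bidegree $q+1-s\le q$, and the full bidegree $q$ is reached only including the factor with exponent $0$ (when $s\ge2$, or $\omega$ with exponent $0$ in general), the telescoping gives a bound $C\delta^{(n_1-n_2)+(n_2-n_3)+\cdots}=C\delta^{n_1-n_s}$.

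Finally the constants $c_0$ and the induction constants depend only on $X,f,\omega,\delta$ and the finite family $\{\beta_j^I\}$, so taking $C''$ as their maximum over $s$ and $I$ yields $\|R_s^I\|\le C''\delta^{n_1-n_s}$. (For $s=1$ this is just $\|\beta_q^I\|\le C''$, trivially.)
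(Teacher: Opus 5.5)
Your argument is correct and is essentially the paper's proof: you dominate $\beta^I_{q+1-s}$ by $c_0\,\omega^{q+1-s}$ and then run the induction of Lemma \ref{lem:pull-back-mass} for $f^{-1}$, using that every push-forward by a positive number of iterates acts on bidegree at most $q-1$, because the factor with $i=s$ is $\omega$ itself. One small slip: $\bigwedge_{i=2}^{s}(f^{n_i-n_s})_*(\omega)$ already contains that factor $\omega$, so the extra $\wedge\,\omega$ in your display is superfluous (though harmless, since wedging a positive current with $\omega$ does not change its mass), and Lemma \ref{lem:pull-back-mass} does not apply as stated to $q+1$ (or even $q$) factors; the cleanest route, as in the paper, is to apply it to the $q-1$ factors other than this $\omega$, which is allowed since $q-1<k-p$.
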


\begin{proof}
By definition, we have
\[0\leq R_s^I
=
(f^{\,n_1-n_s})_*(\beta_{q+1-s}^{I})
\wedge
\bigwedge_{i=2}^s
(f^{\,n_i-n_s})_*(\omega)\lesssim
(f^{\,n_1-n_s})_*(\omega^{q+1-s})
\wedge
\bigwedge_{i=2}^s
(f^{\,n_i-n_s})_*(\omega),
\]
where the implicit constant depends only on $s$ and 
 the choice of the $\beta_s^I$'s
(in particular, it can be taken independent of $s$ and $I$). Observe that the last expression 
can be written as  the wedge product of $q =k-p$ 
positive closed $(1,1)$-currents, and that 
the one 
corresponding to $i=s$
is equal to $\omega$. We can then apply Lemma
\ref{lem:pull-back-mass}
(with $f^{-1}$ instead of $f$) 
to the current
$(f^{\,n_1-n_s})_*(\omega^{q+1-s})
\wedge
\bigwedge_{i=2}^{s-1}
(f^{\,n_i-n_s})_*(\omega)$,
 which gives
$\|R_s^I\|\lesssim \delta^{n_1-n_s}$.
The assertion follows.
\end{proof}

\begin{proof}[Proof of Proposition~\ref{pro:*-mass(k-p,k-p)}]
Combining Lemmas~\ref{lem:A_first_reduction},
\ref{lem:B_recursive_star},
\ref{lem:C_holder_control}, and 
\ref{lem:D-mass}
we obtain
\[
\big\|
\Omega'_{n_1,\dots,n_q}(dd^c\chi)
\big\|^{\bigstar}
\le
(2B)^{q}{ \delta^{n_1}}\Big(
{\max_{|I|=q}\|dd^c R_1^{I}\|^{\bigstar}}
+{q\, (C'+C'')\,A^{n_1\gamma}}\Big),
\]
where the constants 
 $A,B,C', C''$
 are as in those lemmas.
The assertion follows setting $C:= (2B)^{q}\big(
{\max_{|I|=q}}\|dd^c R_1^{I}\|^{\bigstar}+{q\,(C'+C'')}\big)$
{and observing that
the forms $R_1^I=\beta_q^I$
are fixed and depend only on $\chi$.}
\end{proof}

\begin{remark}\label{r:dim2}
The
estimate in Proposition \ref{pro:*-mass(k-p,k-p)}
is the crucial reason for introducing the norm $\|\cdot\|^\bigstar$, 
and the need for
the generalization of the convergence speed of \cite{DS10} given by Theorem \ref{thm:new-speed} 
with respect to this norm.
 See Section \ref{ss:role-star} for more details on this.
\end{remark}

\section{Supports of measures of large entropy}
\label{s:proof-thm}

\subsection{Proof of Theorem~\ref{thm: dimX=k, p}}
It is enough to prove that $\supp \nu \subseteq \supp T^+$. Indeed, the same arguments applied with
$f^{-1}$ instead of $f$ yield
$\supp\nu\subseteq \supp T^-$,
and therefore $\supp\nu \subseteq J$.

Assume that $\nu$ gives positive mass to a compact set 
$K\subset X\setminus\supp T^+$.
Let $W\Subset W_1$ be open neighborhoods of $K$ such that 
$\overline{W_1}\cap\supp T^+=\emptyset$,
and let $\chi$ be a smooth cut-off function
which is equal to $1$ on $W$
and is supported in $W_1$.
By a now-classical construction due to Gromov \cite{Gromov03}, we have
\begin{equation}\label{eq:entropy-gromov}
h_t(f,K)\le \lov(f,W)
:=\limsup_{m\to\infty}\frac1m\log\volume(\Gamma_m^W),
\end{equation}
where $h_t(f,K)$ denotes the
 topological entropy of $f$ on $K$,
\[
\Gamma_m^W=\{(z,f(z),\dots,f^{m-1}(z)):\ z\in W\}
\]
and
\[
k!\, \volume(\Gamma_m^W) =
 \int_{\Gamma_m^W} \omega^k=
\sum_{0\le m_i\le m-1}
\int_W
\bigwedge_{i=1}^k (f^{m_i})^*(\omega).
\]
Since the number of terms 
in the last sum
is $m^k$, it suffices to estimate each integral separately.
Recall that $\delta$ denotes an auxiliary dynamical degree for $f$.

\begin{proposition}\label{prop:single_term_estimate}
Fix $0<\gamma\le1$.
There exist
constants $A>1$ (independent of $\gamma$) and 
 $C =C(\gamma)>0$ 
  such that for all integers
$0\le m_k\le\cdots\le m_1\le m-1$ we have
\[
\int_W
\bigwedge_{i=1}^{k}(f^{m_i})^*(\omega)
\le
C_\gamma\,A^{m\gamma}\delta^{m}.
\]
\end{proposition}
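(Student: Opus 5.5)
We need to bound $\int_W \bigwedge_{i=1}^k (f^{m_i})^*\omega$ by $C_\gamma A^{m\gamma}\delta^m$. The plan is to replace $\mathbf 1_W$ by $\chi$, push everything forward by $f^{m_{p}}$, and split the wedge product into a part containing $p$ factors and a part containing $q=k-p$ factors, pairing the latter against the Green current via Theorem~\ref{thm:new-speed}.

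\textbf{Step 1: reduction.} Since $\chi\ge 0$ equals $1$ on $W$ and the integrand is positive,
\[
\int_W \bigwedge_{i=1}^k (f^{m_i})^*\omega \le \int_X \chi \bigwedge_{i=1}^k (f^{m_i})^*\omega .
\]

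\textbf{Step 2: splitting by where $p$ falls.}
\begin{itemize}
\item If $m_1$ is small compared with $m$, Lemma~\ref{lem:pull-back-mass} does not apply directly, since $k\ge p$. Instead we use the trivial bound $\|\bigwedge_i (f^{m_i})^*\omega\| \lesssim d_p^{m_1}$. This suggests the true target is $\delta^{m_1}$ in place of $\delta^m$; indeed, the summation over $m_i\le m-1$ in \eqref{eq:entropy-gromov} only needs $\lesssim m^k A^{m\gamma}\delta^m$, and $\delta^{m_1}\le\delta^m$ up to the sign convention $\delta\ge1$. So I aim for $C A^{m_1\gamma}\delta^{m_1}$.
\item Group the first $q=k-p$ factors (largest times) and the last $p$ factors. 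Write $\Lambda := \bigwedge_{i=q+1}^{k}(f^{m_i})^*\omega$, a positive closed $(p,p)$-form.
\item Change variables by $f^{m_{q}}$ in the integral:
\[
\int \chi \bigwedge_{i=1}^q (f^{m_i})^*\omega\wedge\Lambda = \int (f^{m_q})_*\chi \,\bigwedge_{i=1}^{q}(f^{m_i-m_q})^*\omega\wedge (f^{m_q})_*\Lambda .
\]
Rather than this, it is cleaner to push everything by $f^{m_1}$. We then see a $(k-p,k-p)$-object built from $(f^{n_i})_*$ with $n_i=m_1-m_i$, and $\Lambda$ becomes a pull-back $(f^{N})^*$ of a closed $(p,p)$-form, with $N=m_1-m_{k}$ roughly. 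Tracking this precisely is the main bookkeeping.
\end{itemize}

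\textbf{Step 3: applying the convergence speed.} Following Lemma~\ref{lem:A_first_reduction}, write the integral as $\langle (f^{n})^*(S), R\rangle$. Here $S$ is a positive closed $(p,p)$-form of mass $\lesssim d_p^{\,\cdot}$, obtained from the $p$ factors $\Lambda$ (normalized to $\|S\|_*\le1$). The current $R$ is $\chi$ times the pushed-forward $q$ factors, i.e.\ exactly of the form $\Omega_{n_1,\dots,n_q}(\chi)$, which is $\omega$-wedged as in Proposition~\ref{pro:*-mass(k-p,k-p)}.

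Since $\supp R\cap\supp T^+=\emptyset$, the term $c\langle T^+,R\rangle$ vanishes. Theorem~\ref{thm:new-speed} then gives
\[
|\langle (f^n)^*S,R\rangle| \le B\,\delta^n\big(\|dd^cR\|^\bigstar+\|\mathscr U_{dd^cR}\|_{\gamma,*}+\|R\|\big).
\]
The three terms on the right are controlled as follows:
\begin{itemize}
\item $dd^c R=\Omega'_{\dots}(dd^c\chi)$, whose $\bigstar$-norm is bounded by Proposition~\ref{pro:*-mass(k-p,k-p)} by $CA^{n_1\gamma}\delta^{n_1}$.
\item The Hölder constant of its super-potential is bounded by Proposition~\ref{prop:M_q_general}, applied with $\Theta=dd^c\chi$, by $C A^{n_1\gamma}\delta^{n_1}$.
\item $\|R\|\lesssim\delta^{n_1}$ by Lemma~\ref{lem:pull-back-mass} applied to $f^{-1}$.
\end{itemize}

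\textbf{Step 4: assembling the exponents.} Multiplying the factors, the powers of $d_p$ coming from normalizing $S$ cancel against $d_p^{-n}$. The remaining powers of $\delta$ telescope to $\delta^{m_1}\le\delta^{m}$, with loss $A^{m\gamma}$.

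\textbf{Main obstacle.} The hard step is the exponent bookkeeping in Step~4. We must choose the iterate by which to push forward so that:
\begin{itemize}
\item the $p$ ``expanding'' factors form a single $d_p^{-n}(f^n)^*S$ with $\|S\|_*$ bounded;
\item the $q$ remaining factors are ordered as $n_1\ge\dots\ge n_q\ge0$, as required by Proposition~\ref{pro:*-mass(k-p,k-p)};
\item the product of the resulting $\delta^{n}$ and $\delta^{n_1}$ does not exceed $\delta^{m}$.
\end{itemize}
If the $p$ factors themselves have different times, I would first bound them by a single pull-back using Lemma~\ref{lem:growth}, absorbing the loss into $A^{m\gamma}$.
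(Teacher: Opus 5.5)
There is a genuine gap. The whole content of this proof is the choice of decomposition, and the one you sketch points the wrong way.

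\textbf{Which factors go into $S$.} Theorem~\ref{thm:new-speed} needs the $(p,p)$-part to be \emph{pulled back} by some $f^n$ with $n\ge 0$, while $R$ must consist of a push-forward of $\chi$ and push-forwards of $\omega$. After changing variables by $f^t$, the factor $(f^{m_i})^*\omega$ becomes a pull-back only when $m_i\ge t$. So $S$ has to be built from the $p$ \emph{largest} times $m_1,\dots,m_p$. It cannot be built from your $\Lambda=\bigwedge_{i>q}(f^{m_i})^*\omega$, which carries the $p$ smallest times. Your ``cleaner'' choice of pushing everything by $f^{m_1}$ turns $\Lambda$ into $\bigwedge_{i>q}(f^{m_1-m_i})_*\omega$. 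That is a push-forward, not a pull-back $(f^N)^*$, so Theorem~\ref{thm:new-speed} does not apply to it. Push-forwards of $(p,p)$-forms are governed by $f^{-1}$ and $T^-$, whereas $\chi$ is only supported off $\supp T^+$.

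\textbf{The mass of $S$.} You say the powers of $d_p$ coming from normalizing $S$ cancel against $d_p^{-n}$. They do not. Multiplying \eqref{eq:Holder_Speed} by $d_p^n$ and using $\langle T^+,R\rangle=0$ gives $|\langle (f^n)^*S,R\rangle|\le B\,\delta^n\|S\|_*(\cdots)$. The factor $d_p^{-n}$ has already been used up, so any $d_p$-type growth of $\|S\|_*$ destroys the $\delta^m$ bound. Your fallback fails for the same reason. ``Bounding'' the $p$ factors by a single pull-back via Lemma~\ref{lem:growth} would cost an exponential factor with a $\gamma$-independent base, such as $d_p/\delta$. Such a factor cannot be absorbed into $A^{m\gamma}$, and the entropy argument needs exactly that $\gamma\log A\to 0$.

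\textbf{The missing idea.} Push forward by $f^{m_{p+1}}$ and factor out $(f^{m_p-m_{p+1}})^*$. That is, take
\[
S=\bigwedge_{i=1}^p(f^{m_i-m_p})^*\omega
\quad\text{and}\quad
R=(f^{m_{p+1}})_*\chi\wedge\bigwedge_{i=p+1}^k(f^{m_{p+1}-m_i})_*\omega .
\]
This choice of base points does two things.
\begin{enumerate}
\item The $i=p$ factor of $S$ is $\omega$ itself, so only $p-1$ factors are genuinely pulled back. Lemma~\ref{lem:pull-back-mass} then gives $\|S\|_*\lesssim\delta^{m_1-m_p}$ rather than a $d_p$-rate.
\item The $i=p+1$ factor of $R$ is $\omega$. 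This is what makes $dd^cR$ an $\Omega'(dd^c\chi)$-term, to which Propositions~\ref{pro:*-mass(k-p,k-p)} and~\ref{prop:M_q_general} apply. It also gives $\|R\|\lesssim\delta^{m_{p+1}-m_k}$ via Lemma~\ref{lem:pull-back-mass} for $f^{-1}$, applied to $k-p-1$ factors.
\end{enumerate}
The exponents then telescope:
\[
\delta^{m_p-m_{p+1}}\cdot\delta^{m_1-m_p}\cdot A^{m_{p+1}\gamma}\delta^{m_{p+1}}=A^{m_{p+1}\gamma}\delta^{m_1}\le A^{m\gamma}\delta^{m}.
\]
The term coming from $\|R\|$ is of the same or smaller order.
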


Assuming Proposition~\ref{prop:single_term_estimate} for now, 
we deduce from \eqref{eq:entropy-gromov} that
\[
\volume(\Gamma_m^W)\lesssim m^k A^{m\gamma}\delta^m,
\]
where the implicit constant depends on $\gamma$ but is independent of $m$.
Hence
\[
h_t(f,K)\le \log\delta+\gamma\log A.
\]
Since $\gamma>0$ and 
$d_p>\delta>\delta_f$
 are arbitrary,
we can choose $\gamma$ and $\delta$ 
so that
$h_t(f,K)<h_\nu(f)$,
contradicting the variational principle.
Thus, we have
 $\supp\nu\subseteq\supp T^+$, and the proof is complete.

\subsection{Proof of Proposition~\ref{prop:single_term_estimate}}

We may assume $0\le m_k\le\cdots\le m_1\le m-1$.
Since 
 $W\subset\supp\chi$, we have
\[
\int_W\bigwedge_{i=1}^{k}(f^{m_i})^*(\omega)
\le
\Big\langle\bigwedge_{i=1}^{k}(f^{m_i})^*(\omega),\chi\Big\rangle 
=
\Big\langle
\bigwedge_{i=1}^{k}(f^{m_i-m_{p+1}})^*(\omega),
(f^{m_{p+1}})_*(\chi)
\Big\rangle.
\]

Set
\begin{equation}\label{eq:SR}
S:=\bigwedge_{i=1}^{p}(f^{m_i-m_p})^*(\omega),
\quad
\text{ and }
\quad
R:=(f^{m_{p+1}})_*(\chi)
\bigwedge_{i=p+1}^{k}(f^{m_{p+1}-m_i})_*(\omega).
\end{equation}
Then
\[
\int_W\bigwedge_{i=1}^{k}(f^{m_i})^*(\omega)
\leq
\Big\langle
(f^{m_p-m_{p+1}})^*(S),R
\Big\rangle.
\]
We will estimate the above expression by means of Theorem \ref{thm:new-speed}. 
For this,
we need to bound $\|S\|_*$, 
$\|dd^c R\|^{\bigstar}$, 
$\|\mathscr U_{dd^c R}\|_{\gamma, *}$, 
and $\|R\|$.
The implicit constants below can depend on $\gamma$ but are independent of the integers $m_i$.

\medskip

We first estimate $\|S\|_*$.
By Lemma~\ref{lem:pull-back-mass} (applied with $q=p-1$), we have
\[
\|S\|_* =\|S\|
\lesssim \delta^{m_1-m_p}.
\]
Next, using the notation
\eqref{eq:def-omega}–\eqref{eq:def-omegaprime}, we have
\[
dd^c R
=
\Omega'_{m_{p+1},\,m_{p+1}-m_{p+2},\,\dots,\,m_{p+1}-m_k}(dd^c\chi).
\]
Proposition~\ref{pro:*-mass(k-p,k-p)}
(applied to the integers $(m_{p+1},m_{p+1}-m_{p+2},...,m_{p+1}-m_{k})$ instead of $n_1,...,n_q$)
 gives
\[
\|dd^c R\|^{\bigstar}
\lesssim
A^{m_{p+1} \gamma}\delta^{m_{p+1}}.
\]
By Proposition~\ref{prop:M_q_general}
(applied with $j:=1, \ell:=k-p$, $\Theta:=dd^c\chi$ and with the 
integers $(m_{p+1},m_{p+1}-m_{p+2},...,m_{p+1}-m_{k})$),
the super-potential of $dd^cR$ satisfies
\[
\|\mathscr U_{dd^c R}\|_{\gamma,{ *}}
\lesssim
A^{m_{p+1}\gamma}\delta^{m_{p+1}}.
\]
{Finally,
we bound $\|R\|$.
From the definition of $R$
(i.e., since $0\leq \chi \leq 1$),
it is enough to bound the mass of 
$\bigwedge_{i=p+1}^{k}(f^{m_{p+1}-m_i})_*(\omega)$. 
Observe that this is the wedge product
of $k-p$ pull-backs of $\omega$,
and that 
the term corresponding to $i=p+1$
is equal to $\omega$.
We deduce from 
Lemma \ref{lem:pull-back-mass}
(applied
to the current
$\bigwedge_{i=p+2}^{k}(f^{m_{p+1}-m_i})_*(\omega)$
with $f^{-1}$ instead of $f$ and $k-p$ instead of $p$)
that
\[
\displaystyle \|R\|
\lesssim \delta^{m_{p+1}-m_k}.
\]}

Since $\supp R\cap\supp T^+=\emptyset$,
 Theorem~\ref{thm:new-speed}
implies
 that
\[
\Big\langle
(f^{m_p-m_{p+1}})^*(S),R
\Big\rangle
\lesssim
\delta^{m_p-m_{p+1}}
\|S\|_*
\Big(
\|dd^cR\|^{\bigstar}
+
\|\mathscr U_{dd^cR}\|_{\gamma, *}
{+\|R\|}
\Big).
\]
Combining the previous estimates yields
\[
\int_W\bigwedge_{i=1}^{k}(f^{m_i})^*(\omega)
\lesssim
A^{m_{p+1}\gamma}\delta^{m_1}
\le
A^{m\gamma}\delta^m,
\]
which proves the proposition.

\section{Final Remarks and further questions}
\label{s:final}

\subsection{On the  role of the \texorpdfstring{$\bigstar$}{}-norm}
\label{ss:role-star}
We explain here why the norm $\|\cdot\|^{\bigstar}$ is needed in our argument.
The crucial point is Proposition~\ref{pro:*-mass(k-p,k-p)}, where we have to estimate
currents obtained by iterating localized forms such as $dd^c\chi$ outside the Julia set.
After writing
$dd^c\chi=\beta_1^+-\beta_1^-$,
the positive currents that naturally appear in the inductive construction are in general
not closed. Therefore, they are not directly controlled by the usual $\|\cdot\|_*$-norm,
which is defined through decompositions into positive \emph{closed} currents.

This is precisely why Theorem~\ref{thm:new-speed} has to be proved with respect to the
more flexible norm $\|\cdot\|^{\bigstar}$. The latter allows one to work with
decompositions into positive currents that are not necessarily closed, which is exactly
what is needed in the localized estimates of Section~4. In this way, one obtains bounds
at the auxiliary rate $\delta^{n_1}$, with $\delta<d_p$, rather than at the dominant rate
$d_p^{n_1}$.

In dimension $k=2$, this difficulty disappears. Indeed, when $p=q=1$, the relevant term
in Proposition~\ref{pro:*-mass(k-p,k-p)} reduces to
$(f^{n_1})^*(dd^c\chi)\wedge \omega$.
After decomposing $dd^c\chi=\beta_1^+-\beta_1^-$, the forms
\[
(f^{n_1})^*(\beta_1^\pm)\wedge \omega
\]
have bidegree $(2,2)$ and are therefore automatically closed. In this case, the
equidistribution result of \cite{DS10} is sufficient, and no new norm is needed.
A similar simplification occurs in the proof of Proposition~\ref{prop:single_term_estimate}
(which builds on  Proposition \ref{pro:*-mass(k-p,k-p)}).

\subsection{One-sided dominance}

Let $f$ be a holomorphic automorphism of a compact K\"ahler manifold $X$ of dimension $k$.
Assume that for some integer $1\le p\le k$ the $p$-th dynamical degree is only
one-sided dominant, for instance
$d_{p-1}<d_p$.
Let $\lambda_f$ be the maximum between $d_{p-1}$ and the absolute values of all
eigenvalues different from $d_p$ for the action of $f^*$ on $H^{p,p}(X,\mathbb{R})$.
In particular, we have $\lambda_f<d_p$.

It would be natural to expect that any ergodic measure $\nu$ with
$h_\nu(f)>\log \lambda_f$
is supported on $\supp T_+$.
However, our proof does not apply 
at this level of generality. The point is that, although
the conclusion only concerns $\supp T_+$, the argument still uses estimates in the
complementary directions. This already enters in Theorem~\ref{thm:new-speed}, whose proof
relies on the simple action assumption, and it appears again in the final part of the proof
of Proposition~\ref{prop:single_term_estimate}, where the mass of the current $R$ is
estimated by applying Lemma~\ref{lem:pull-back-mass} to $f^{-1}$ in bidegree $k-p$.
For this reason, our method genuinely requires a two-sided dominance assumption, even if
one only aims at proving the inclusion $\supp\, \nu \subseteq \supp T_+$.

\subsection{Non-invertible maps and correspondences}

The invertibility of $f$ is used at several points in our method, as it allows us to transfer
the action and the relevant estimates from pull-backs to push-forwards. For example, in
Lemma~\ref{lem:growth} we use the identity
\[
\langle (f^n)^*R,\Phi\rangle = \langle R,(f^n)_*\Phi\rangle
\]
and then control the mass of $(f^n)_*\Phi$.

Invertibility is also used in Lemma~\ref{lem:Holder_constant}. There, the control of the
$\mathcal C^{-2}$-norm of $(f^n)^*S$ is obtained by moving the action to test forms and
using the smoothness of $f^{-1}$ to estimate the $\mathcal C^2$-norm of $f_*\Phi$. This
step has no direct analogue for a non-invertible map in the form needed here.

For these reasons, our localization argument does not directly extend to the
non-invertible case, nor more generally to holomorphic correspondences.
The
lower-regularity techniques and related tools developed in \cite{Vergamini25,LV26}
would likely be useful in this direction.

\subsection{Possible extensions}

It is natural to ask whether Theorem~\ref{thm: dimX=k, p} extends to non-algebraic
settings such as H\'enon-like maps (see \cite{BDR,BDR2,DNS,DS2006}),
where cohomological
tools are no longer available,
or to
attractors as in 
\cite{Dinh-attracting-current, Taflin18}. 
In both cases, one has
invariant Green-type currents and equilibrium measures obtained by intersecting such
currents.
However, our proof relies crucially on the quantitative convergence estimate
\eqref{eq:Holder_Speed}, in a form compatible with localized non-closed currents and
with the $\bigstar$-norm. More precisely, one would need convergence estimates towards the Green currents
which are
uniform in the integers $n_1\ge \cdots \ge n_\ell\ge 0$ for mixed terms of the form
\[
(f^{n_1})^*(\Theta)\wedge (f^{n_2})^*(\omega)\wedge \cdots \wedge (f^{n_\ell})^*(\omega),
\]
or for their localized analogues obtained by taking $\Theta=dd^c\chi$.

The convergence results currently available in these settings do not seem to provide this
precise form. Therefore, an extension of our theorem would likely require a corresponding
super-potential convergence theorem adapted to such mixed terms in these
situations.

\appendix \section{Regularization of currents and proofs of Propositions  
\ref{prop:U_S-estimates} and \ref{prop: Sp-convergence}}

\subsection{Regularization of currents}

In this subsection we recall
the definition of the regularization operators
$\mathcal{L}_\theta:\mathscr{D}_s(X)\to \mathscr{D}_s(X)$, $0\le s\le k$,
introduced in \cite[Sections~2.3--2.4]{DS10JAG}.
They are used to control super-potentials along structural lines and will be the main tool in the proof of 
Proposition \ref{prop:U_S-estimates}.
Recall that $(X,\omega)$ is
a compact K\"ahler manifold of dimension $k$.
Let $\Delta\subset X\times X$ be the diagonal and 
$\pi:\widehat{X\times X}\to X\times X$
the blow-up along $\Delta$,
with exceptional divisor $\widehat\Delta:=\pi^{-1}(\Delta)$.
Denote by $\Pi_1,\Pi_2:\widehat{X\times X}\to X$ the compositions of $\pi$ with the two natural projections from $X\times X$ to its factors.
Let also
$\tau_0:\P^1\times\widehat{X\times X}\to \P^1$ and
$\tau:\P^1\times\widehat{X\times X}\to \widehat{X\times X}$ be the natural projections.

We use the notation of \cite[Example~2.3.1]{DS10JAG}.
Let $\eta$ be a smooth real closed $(k-1,k-1)$-form on $\widehat{X\times X}$.
Let $\gamma$ be a smooth real closed $(1,1)$-form cohomologous to $[\widehat\Delta]$.
Then there exists a quasi-p.s.h.\ function $\varphi$ on $\widehat{X\times X}$ such that
\[
dd^c\varphi=[\widehat\Delta]-\gamma.
\]
Set $\Theta_0:=[\widehat\Delta]\wedge\eta$ and, for $\theta\in\P^1$,
\[
\Theta_\theta := (dd^c\varphi_\theta+\gamma)\wedge\eta,
\]
where $\varphi_\theta$ is a regularization of $\varphi$ such that
$\varphi_\theta\searrow \varphi$ as $|\theta|\to 0$.
For a current ${S}$ on $X$, define the transform
\[
\mathcal L_\theta(S):=(\Pi_2)_*\big(\Pi_1^*(S)\wedge\Theta_\theta\big)
\]
and for convenience denote by  $S_{\theta}:=\mathcal{L}_{\theta}(S)$. 
The family $(S_{\theta})_{\theta \in \mathbb P^1}$
is called the \emph{special structural line} associated to $S$.
The following proposition and lemma
summarize
the main properties of such transforms that we will need,
 see \cite[Section 2.3, and 2.4]{DS10JAG}
 and \cite[Section 3.2]{Vergamini25}.

\begin{proposition}\label{prop:property-transform}
The transforms $\mathcal L_\theta$ satisfy the following properties:
\begin{enumerate}
\item $\mathcal L_\theta$ depends only on $|\theta|$
and $\mathcal L_0(S)=S$;
\item $\mathcal L_\theta=\mathcal L_\infty$ for $|\theta|\ge1$;
\item $\{\mathcal L_\theta(S)\}=\{S\}$ for every $\theta\in\P^1$ and $S\in\mathcal D_s(X)$;
\item $\mathcal L_\theta$ is continuous 
with respect to the $*$-topology for every $\theta\in\P^1$;
\item $\|\mathcal L_\theta(S)\|_{*}\le c\,\|S\|_{*}$, with $c>0$ independent of $\theta$ and $S$;
\item for $|\theta|\le1$ we have $\dist_{2}(\mathcal L_{\theta}(S), S) \le c|\theta|\,\|S\|_{*}$,
with $c>0$ independent of $\theta$ and $S$.
\end{enumerate}
\end{proposition}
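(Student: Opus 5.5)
The six properties are essentially those of \cite[Sections~2.3--2.4]{DS10JAG}, so the plan is to re-derive each one directly from the kernel formula $\mathcal L_\theta(S)=(\Pi_2)_*(\Pi_1^*S\wedge\Theta_\theta)$ and the choice of the regularization $\varphi_\theta$.

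\textbf{Properties (1) and (2).} These are built into the definition. We choose $\varphi_\theta$ to be a regularization depending only on $|\theta|$, for instance $\varphi_\theta=\max_\epsilon(\varphi,\log|\theta|)$ up to the standard smoothing. We also arrange $\varphi_\theta$ to be independent of $\theta$ once $|\theta|\ge1$. Then $\Theta_\theta$ depends only on $|\theta|$ and is constant for $|\theta|\geq 1$, which gives (1) and (2). For $\theta=0$ the kernel is $\Theta_0=[\widehat\Delta]\wedge\eta$. Choosing $\eta$ so that its restriction to the fibers of $\widehat\Delta\to\Delta$ has integral $1$ (the normalization of \cite[Example~2.3.1]{DS10JAG}), the push-forward of $\Pi_1^*S\wedge[\widehat\Delta]\wedge\eta$ equals $S$. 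This is the identity $\mathcal L_0(S)=S$.

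\textbf{Property (3).} The classes $\{\Theta_\theta\}$ do not depend on $\theta$, since $dd^c\varphi_\theta+\gamma$ is cohomologous to $\gamma$, hence to $[\widehat\Delta]$. The operator $S\mapsto(\Pi_2)_*(\Pi_1^*S\wedge\Theta)$ acts on cohomology through $\{\Theta\}$ only. Therefore $\{\mathcal L_\theta(S)\}=\{\mathcal L_0(S)\}=\{S\}$.

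\textbf{Properties (4) and (5).} For $\theta\neq0$ the kernel $\Theta_\theta$ is smooth, so $\mathcal L_\theta$ is continuous on currents. For (5), write $\Theta_\theta$ as a difference of positive closed forms, $\Theta_\theta=(dd^c\varphi_\theta+A\,\widehat\omega)\wedge\eta^+ - \dots$, with $\eta=\eta^+-\eta^-$ positive closed and $A\widehat\omega+\gamma\ge0$. Since $\varphi_\theta$ is $A\widehat\omega$-psh uniformly in $\theta$, each piece is positive closed. Hence $\mathcal L_\theta$ maps positive closed currents to differences of positive closed currents, and the masses are computed cohomologically. This gives $\|\mathcal L_\theta(S)\|_*\le c\|S\|_*$ with $c$ independent of $\theta$. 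Together with the smooth kernel, this also yields continuity in the $*$-topology.

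\textbf{Property (6): the main step.} We must estimate $\langle \mathcal L_\theta(S)-S,\Phi\rangle$ for $\|\Phi\|_{\mathcal C^2}\le1$. This pairing equals
\[
\langle \Pi_1^*S\wedge dd^c(\varphi_\theta-\varphi)\wedge\eta,\Pi_2^*\Phi\rangle
=\langle (\varphi_\theta-\varphi)\,\Pi_1^*S\wedge\eta, dd^c\Pi_2^*\Phi\rangle .
\]
Since $\Pi_2^*\Phi$ is only $\mathcal C^2$ up to the blow-up, we use the local estimate $|dd^c\Pi_2^*\Phi|\lesssim\|\Phi\|_{\mathcal C^2}$ times a smooth form. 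We then reduce to $S$ positive closed, by the decomposition realizing $\|S\|_*$. This bounds the pairing by $\int(\varphi_\theta-\varphi)\,\Pi_1^*S\wedge\eta'$ with $\eta'$ positive. The function $0\le\varphi_\theta-\varphi$ is supported in $\{\varphi<\log|\theta|+O(1)\}$, a tubular neighbourhood of $\widehat\Delta$ of radius $\sim|\theta|$.

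The key point, and the obstacle I expect, is the uniform bound $\int(\varphi_\theta-\varphi)\Pi_1^*S\wedge\eta'\lesssim|\theta|\,\|S\|$, rather than the weaker $|\theta|\log(1/|\theta|)$. Here I would follow \cite{DS10JAG}. Near $\widehat\Delta$, $\Pi_1$ restricted to a neighbourhood is a submersion whose fibers meet $\widehat\Delta$ transversally. Integrating along these fibers, the one-variable estimate for the logarithmic singularity $\varphi\sim\log\mathrm{dist}$ produces exactly the factor $|\theta|$: the $\mathcal C^2$ loss is absorbed by the fact that $\Pi_2^*\Phi$ vanishes to second order in the normal direction after subtracting its Taylor part. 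This yields (6) with $c$ independent of $\theta$ and $S$.
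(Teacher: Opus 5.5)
The paper does not prove this proposition; it quotes it from \cite[Sections~2.3--2.4]{DS10JAG} and \cite[Section~3.2]{Vergamini25}. Your outline reconstructs that standard construction, and (1)--(5) are handled correctly: the normalization of $\eta$ on the fibres $\P^{k-1}$ of $\widehat\Delta\to\Delta$, the uniform $A\widehat\omega$-plurisubharmonicity of $\varphi_\theta$, the splitting of $\eta$ and $\gamma$ into positive closed pieces, and the smoothness of $\Theta_\theta$ for $\theta\neq0$. For (6), the identity
\[
\langle \mathcal L_\theta(S)-S,\Phi\rangle=\big\langle (\varphi_\theta-\varphi)\,\Pi_1^*S\wedge\eta,\ \Pi_2^*(dd^c\Phi)\big\rangle
\]
together with the reduction to $S$ positive closed is the right starting point. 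The same identity shows that $\mathcal L_\theta(S)-S$ is $dd^c$-exact, which gives (3) directly without having to argue about the singular kernel $\Theta_0$.

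Your last paragraph, however, misdiagnoses the remaining step. There is no $|\theta|\log(1/|\theta|)$ obstacle, and no Taylor cancellation is needed. Once $dd^c$ has been moved onto the test form, you only use $\|\Pi_2^*(dd^c\Phi)\|_\infty\lesssim\|\Phi\|_{\mathcal C^2}$. What remains is a sup-norm bound on the fibre integrals $(\Pi_1)_*\big((\varphi_\theta-\varphi)\beta\big)$ for smooth $\beta$.

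To get it, cover $\widehat\Delta$ by finitely many charts with coordinates $(x,t,w)$, where $\Pi_1(x,t,w)=x$, $t\in\C$, and $\widehat\Delta=\{t=0\}$. In such a chart $\varphi=\log|t|+h$ with $h$ smooth, because $dd^c(\varphi-\log|t|)=-\gamma$ is smooth. Hence $\varphi_\theta-\varphi$ is supported in $\{|t|\le C|\theta|\}$ and is bounded there by $C+\log(C|\theta|/|t|)$. Since $\widehat\Delta$ is a complex hypersurface, of real codimension two, transverse to the fibres of $\Pi_1$, setting $r=C|\theta|$ gives
\[
\int_{|t|\le r}\Big(C+\log\frac{r}{|t|}\Big)\,d\lambda(t)=\pi r^2\Big(C+\frac12\Big).
\]
So the fibre integrals are $O(|\theta|^2)$ uniformly in $x$. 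When $|\theta|$ is bounded below, use instead the uniform integrability of $\log|t|$ along the fibres. Pairing with the positive closed $S$ then yields $\dist_2(\mathcal L_\theta(S),S)\lesssim|\theta|^2\|S\|_*\le|\theta|\,\|S\|_*$ for $|\theta|\le1$.

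You should replace the sentence about $\Pi_2^*\Phi$ vanishing to second order after subtracting its Taylor part, which corresponds to no actual step of the argument, by this explicit computation.
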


\begin{lemma}
\label{lem:L-infty}
For $1 \leq m \leq k$, set $q_{m}=\frac{k+1}{k-m+1}$. Take $S \in \mathscr{D}_{s}^{0}$. There exists a constant $c>0$, independent of $S$, such that:
\begin{enumerate}
\item $\left\|\mathcal{L}_{\infty}(S)\right\|_{L^{q_{1}}} \leq c\|S\|_{*}$;
\item $\left\|\mathcal{L}_{\infty}(S)\right\|_{L^{q_{m+1}}} \leq c\|S\|_{L^{q_{m}}}$ for every $1 \leq m<k$;
\item $\left\|\mathcal{L}_{\infty}(S)\right\|_{L^{\infty}} \leq c\|S\|_{L^{q}}$ for every $k+1 \leq q \leq+\infty$.
\end{enumerate}
\end{lemma}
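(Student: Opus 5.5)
The plan is to write $\mathcal L_\infty$ as an integral operator with an explicit kernel. We then control that kernel's singularity along the diagonal, and derive the three estimates from the mass bound, Young's inequality and Hölder's inequality respectively. This follows the approach of \cite[Section 2.4]{DS10JAG}.

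\textbf{Step 1: kernel representation.} For $|\theta|\ge 1$ the form $\Theta_\infty=(dd^c\varphi_\infty+\gamma)\wedge\eta$ is a smooth form on $\widehat{X\times X}$, since $\varphi_\infty$ is a fixed regularization of $\varphi$. Hence
\[
\mathcal L_\infty(S)(y)=\int_{x\in X} S(x)\wedge K(x,y),\qquad K:=\pi_*\Theta_\infty ,
\]
where $K$ is a $(k-1,k-1)$-form on $X\times X$ that is smooth outside $\Delta$. We will compute $\pi_*$ in local blow-up coordinates: a smooth form on the blow-up of $\C^{2k}$ along a $k$-dimensional linear subspace pushes down to a form whose coefficients are $O\big(\dist(x,y)^{2-2k}\big)$. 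There may be an extra logarithmic factor, which is harmless. By compactness these local bounds glue, and we obtain
\[
|K(x,y)|\lesssim \dist(x,y)^{2-2k}\,\big(1+|\log\dist(x,y)|\big).
\]
This pointwise bound is the step that requires genuine care: one has to track the degrees of the pushed-forward coefficients in the normal directions of the blow-up. Everything afterwards is routine.

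\textbf{Step 2: integrability of the kernel.} From Step 1, $\sup_x \|K(x,\cdot)\|_{L^r}$ and $\sup_y\|K(\cdot,y)\|_{L^r}$ are finite for every $r<\frac{2k}{2k-2}=\frac{k}{k-1}$. In particular they are finite for $r=q_1=\frac{k+1}{k}$, which satisfies $\frac{k+1}{k}<\frac{k}{k-1}$.

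\textbf{Step 3: the three estimates.}
\begin{enumerate}
\item For (1), write $S=S^+-S^-$ with $S^\pm$ positive closed and $\|S^+\|+\|S^-\|=\|S\|_*$. By Minkowski's integral inequality,
\[
\|\mathcal L_\infty(S)\|_{L^{q_1}}\lesssim \sup_x\|K(x,\cdot)\|_{L^{q_1}}\,(\|S^+\|+\|S^-\|)\lesssim\|S\|_*.
\]
\item For (2), $S$ now has $L^{q_m}$ coefficients. Apply Young's inequality for integral operators with exponents $1+\frac1{q_{m+1}}=\frac1r+\frac1{q_m}$. Since $\frac1{q_m}-\frac1{q_{m+1}}=\frac1{k+1}$, this forces $\frac1r=\frac{k}{k+1}$, that is $r=q_1$, which is admissible by Step 2.
\item For (3), take $q\ge k+1$ and let $q'$ be its conjugate exponent, so $q'\le\frac{k+1}{k}=q_1$. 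By Hölder's inequality, $|\mathcal L_\infty(S)(y)|\le \|K(\cdot,y)\|_{L^{q'}}\|S\|_{L^q}$. This is uniformly bounded in $y$ by Step 2, since $X$ is compact and $L^{q_1}\subset L^{q'}$.
\end{enumerate}
The exactness assumption $S\in\mathscr D_s^0$ is not needed for these bounds. It only ensures that we stay in the setting where the lemma is used later. All constants depend only on $K$, hence only on $X$ and the fixed choices of $\eta,\gamma,\varphi_\infty$.
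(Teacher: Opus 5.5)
Your argument is correct and is essentially the kernel argument behind \cite[Section 2.3]{DS10JAG}, which the paper cites instead of giving a proof. There, $\mathcal L_\infty$ is an integral operator with kernel $\pi_*\Theta_\infty = O(\dist(x,y)^{2-2k})$, and the three bounds follow from Minkowski, Young and H\"older using $q_1=\frac{k+1}{k}<\frac{k}{k-1}$; the only slip is cosmetic: $\Theta_\infty$ is a smooth $(k,k)$-form on the $2k$-dimensional blow-up, so $K=\pi_*\Theta_\infty$ has bidegree $(k,k)$ rather than $(k-1,k-1)$ and carries no logarithmic factor, which changes nothing in your estimates.
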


Let ${Q}\in\mathscr{D}_{k-{ s}+1}^0(X)$ be smooth.
On $\C^*\times\widehat{X\times X}$ consider the smooth closed form
\[
T:=(dd^c\varphi_\theta+{\tau^*}(\gamma))\wedge{\tau^*}(\eta),
\]
and define the current
\begin{equation}\label{eq:R-and-T-widehat}
\widehat{{Q}}:= T\wedge \tau^*(\Pi_1^*({ Q})).
\end{equation}
It extends to a current on $\P^1\times\widehat{X\times X}$, still denoted by $\widehat{{Q}}$.
 The following result was established 
in \cite[Proposition 3.2.4]{DS10JAG} with 
$\|{Q}\|_*$
instead of $\|{ Q}\|^{\bigstar}$.

\begin{proposition}\label{prop:star-bigstar-ddc}
Let $\left({ Q}_{\theta}\right)_{\theta \in \mathbb{P}^{1}}$ be the special structural line associated to a smooth form ${Q} \in \mathscr{D}_{k-{s}+1}^{0}$. 
Take $S\in \mathscr{D}_{{ s}}^0$. 
Then $\theta\mapsto \mathscr U_S({Q}_\theta)$ is a continuous
function on $\P^1$
which is constant on $\{|\theta|\ge1\}$.
Moreover, there exists $c>0$, independent of ${Q}$ and $S$, such that
\[
\big\|dd^c_\theta\,\mathscr U_S({Q}_\theta)\big\|_* \le c\,\|S\|_*\,\|{Q}\|^{\bigstar},
\]
where $dd^c_\theta$ is taken on $\P^1$
 and the $\|\cdot\|_*$
 on the left hand side
  denotes the mass of the resulting signed measure on $\P^1$.
\end{proposition}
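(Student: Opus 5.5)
The plan is to follow the proof of \cite[Proposition 3.2.4]{DS10JAG}, keeping track of where the mass of $Q$ actually enters, and to check that only quantities of the form $\|Q\wedge \Phi\|$ with $\Phi$ positive closed are needed. Those are exactly the quantities controlled by $\|Q\|^{\bigstar}$.

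\textbf{Step 1: continuity and constancy.} By linearity and density (Corollary \ref{cor:density-resp-bigstar-norm}), I first reduce to $S$ smooth. Then $\mathscr U_S(Q_\theta)=\langle U_S, Q_\theta\rangle$ for an $\alpha$-normalized smooth potential $U_S$, since $\{Q_\theta\}=\{Q\}=0$. Continuity in $\theta$ follows from the continuity of $\theta\mapsto \Theta_\theta$ on $\C^*$ and from Proposition \ref{prop:property-transform}(6) at $\theta=0$. Constancy on $\{|\theta|\ge1\}$ is Proposition \ref{prop:property-transform}(2).

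\textbf{Step 2: computing $dd^c_\theta$.} Writing $Q_\theta=(\Pi_2)_*(\Pi_1^*Q\wedge\Theta_\theta)$, the function $\theta\mapsto \langle U_S,Q_\theta\rangle$ is the push-forward by $\tau_0$ of the current $\widehat Q\wedge\tau^*\Pi_2^*(U_S)$ on $\P^1\times\widehat{X\times X}$, with $\widehat Q$ as in \eqref{eq:R-and-T-widehat}. Since $T$ and $\Pi_1^*Q$ are closed, $dd^c_\theta$ commutes with $(\tau_0)_*$ and moves onto $U_S$. This gives
\[
dd^c_\theta\,\mathscr U_S(Q_\theta)=(\tau_0)_*\big(\widehat Q\wedge \tau^*\Pi_2^*(S)\big),
\]
up to a boundary contribution at $\theta=0$ coming from the singularity of $\varphi$ along $\widehat\Delta$. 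As in \cite{DS10JAG}, that contribution is controlled because $\mathcal L_0(S)=S$.

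\textbf{Step 3: the mass estimate.} I decompose $S=S^+-S^-$ with $S^\pm$ positive closed and $\|S^+\|+\|S^-\|\le 2\|S\|_*$. I then decompose $Q=Q_1-Q_2$ with $Q_i$ positive and not necessarily closed, nearly realizing $\|Q\|^{\bigstar}$. Finally I write $T$ as a difference of positive closed forms plus a $dd^c$ of a positive term, as in \cite{DS10JAG}, by adding a large multiple of a Kähler form to $\gamma$ and $\eta$. The mass of each piece $(\tau_0)_*(T^\pm\wedge\tau^*\Pi_1^*Q_i\wedge\tau^*\Pi_2^*S^\pm)$ is its pairing with the area form $\omega_{\P^1}$. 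Integrating first over the fibers of $\Pi_1$, this pairing equals $\langle Q_i,\Psi\rangle$, where
\[
\Psi:=(\Pi_1)_*\big(\tau_*(T^\pm\wedge\tau_0^*\omega_{\P^1})\wedge\Pi_2^*S^\pm\big).
\]
The current $\Psi$ is positive and closed, being a push-forward of a product of positive closed currents, with mass $\lesssim\|S^\pm\|$. By the definition of $\|\cdot\|^{\bigstar}$, after regularizing $\Psi$ so that it lies in $c\|S\|_*\,\mathscr H$, we get $\langle Q_i,\Psi\rangle\le c\|S\|_*\,\|Q\|^{\bigstar}$. Summing the pieces gives the claim.

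\textbf{Main obstacle.} The hard part is Step 3. The argument in \cite{DS10JAG} uses cohomological computation of masses, and that is unavailable here because the $Q_i$ are not closed. The plan is to make sure closedness is needed only on the side of $\Psi$, which is positive closed. Two things must be handled carefully. First, the positivity decomposition of the non-smooth factor $dd^c\varphi_\theta+\gamma$ must be done uniformly in $\theta$; I would take the quasi-psh regularizations $\varphi_\theta$ with $dd^c\varphi_\theta\ge-C\gamma'$ for a fixed positive $\gamma'$. Second, $\Psi$ must be approximated by smooth positive closed forms with control of the mass, which I would do via the regularization of \cite[Theorem 2.4.4]{DS10JAG}.
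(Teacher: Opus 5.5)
Your overall route is the paper's. You reduce to smooth $S$ and identify $dd^c_\theta\,\mathscr U_S(Q_\theta)$ on $\mathbb C^*$ with $(\tau_0)_*\big(\widehat Q\wedge\tau^*\Pi_2^*(S)\big)$. You split $T$ and $S$ into positive closed pieces and $Q=Q_1-Q_2$ into positive pieces almost realizing $\|Q\|^{\bigstar}$. You then push everything down to $X$, so that closedness is only needed on the side paired with $Q_1+Q_2$.

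However, the key computation in Step 3 has a bidegree error. The current $\widehat Q\wedge\tau^*\Pi_2^*(S)$ has top degree on $\mathbb P^1\times\widehat{X\times X}$, so $dd^c_\theta H$ is a measure on $\mathbb P^1$. Its mass is its pairing with the constant function $1$, not with $\omega_{\mathbb P^1}$. With $\tau_0^*\omega_{\mathbb P^1}$ inserted, your $\Psi$ has bidegree $(s,s)$. It therefore cannot be paired with $Q_i$, which has bidegree $(k-s+1,k-s+1)$, and $\Psi/\|\Psi\|$ cannot lie in $\mathscr H_{s-1}$, the family entering the definition of $\|Q\|^{\bigstar}$.

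This is not cosmetic. The forms $\gamma,\eta,\Pi_1^*Q,\Pi_2^*S$ carry no $d\theta$ or $d\bar\theta$, so the only part of $T$ that contributes to $dd^c_\theta H$ is the $d\theta\wedge d\bar\theta$-component of $dd^c\varphi_\theta$. Wedging with $\tau_0^*\omega_{\mathbb P^1}$ kills exactly that component. What survives is a $\theta$-average of the fiberwise forms $\Theta_\theta$, which carries no information on how $H$ varies in $\theta$. The correct object is the paper's
\[
\Omega=(\Pi_1)_*\tau_*\big((T_1+T_2)\wedge\tau^*\Pi_2^*(S_1+S_2)\big),
\]
a positive closed $(s-1,s-1)$-current with $\|\Omega\|\lesssim\|S\|_*$, computed cohomologically. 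Accordingly, the decomposition $T=T_1-T_2$ into positive closed forms must be made for $dd^c\varphi_\theta$ taken jointly in $(\theta,x)$ on $\mathbb C^*\times\widehat{X\times X}$, not only for $dd^c_x\varphi_\theta$ at fixed $\theta$. No additional ``$dd^c$ of a positive term'' is needed, and your estimate does not treat one anyway.

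There are three smaller points.

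First, you should regularize $S_1,S_2$ by smooth forms $S_{i,\varepsilon}$ as in \cite[Theorem 2.4.4]{DS10JAG}, rather than regularizing $\Psi$. The $Q_i$ are singular, so with singular $S_i$ neither the products $\tau^*\Pi_1^*Q_i\wedge\tau^*\Pi_2^*S_i$ nor the limit $\langle Q_i,\Psi_\varepsilon\rangle\to\langle Q_i,\Psi\rangle$ is under control. With smooth $S_{i,\varepsilon}$ one may take $\Omega$ smooth, as the paper does. Then the bound $\langle Q_1+Q_2,\Omega\rangle\le\|\Omega\|\sup_{\Phi\in\mathscr H_{s-1}}\langle Q_1+Q_2,\Phi\rangle$ is immediate, and one concludes by continuity of $S\mapsto dd^c_\theta\,\mathscr U_S(Q_\theta)$ and lower semicontinuity of the mass.

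Second, for the same reason your reduction to smooth $S$ should use this $*$-density, not the $\bigstar$-density of Corollary~\ref{cor:density-resp-bigstar-norm}, which gives no control on $\|S_n\|_*$.

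Third, a possible atom of $dd^c_\theta H$ at $\theta=0$ is excluded simply because $H$ is continuous, so it suffices to estimate the mass on $\mathbb C^*$; the identity $\mathcal L_0(S)=S$ is not the reason. Also, continuity of $H$ for non-smooth $S$ does not follow from the smooth case by density, since pointwise limits need not be continuous. This is why the paper invokes \cite[Lemma 2.4.6]{DS10JAG}.
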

\begin{proof}
Since ${Q}$ is smooth, ${ Q}_{\theta}$ is smooth for every $\theta$, and by 
\cite[Lemma 2.4.6]{DS10JAG}
the function $H(\theta) := \mathscr{U}_{S}({ Q}_{\theta})$ is continuous on $\mathbb{P}^{1}$.
 It remains to bound the mass of $dd^c H= dd^c_\theta H$.
  Since this function depends continuously on $S$, by \cite[Theorem 2.4.4]{DS10JAG}, we can assume that $S$ is smooth.  So, if $U$ is a smooth potential of $S$, then $H(\theta)=\left\langle U, R_{\theta}\right\rangle$.
It is enough to estimate the mass of $dd^c H$ on $\mathbb{C}^{*}$. Indeed, the continuity of $H$ implies that $dd^c H$ has no mass on finite sets. Consider in $\mathbb{P}^{1} \times \widehat{X \times X}$ the currents
$$
\widehat{{ Q}}_{U}:=\widehat{{ Q}} \wedge \tau^{*} \Pi_{2}^{*}(U) \quad \text { and } \quad \widehat{{ Q}}_{S}:=\widehat{{ Q}} \wedge \tau^{*} \Pi_{2}^{*}(S).
$$

These currents are smooth on $\mathbb{C}^{*} \times \widehat{X \times X}$. A direct computation gives $H= \left(\tau_{0}\right)_{*}\big(\widehat{{Q}}_{U}\big)$ and $dd^c H=\left(\tau_{0}\right)_{*}\big(\widehat{{Q}}_{S}\big)$ on $\mathbb{C}^{*}$. So, it is enough to estimate the mass of $\widehat{{Q}}_{S}$ on $\mathbb{C}^{*} \times \widehat{X \times X}$.
Recall that,
 in (\ref{eq:R-and-T-widehat}), 
 $T$ is a smooth closed $(k,k)$ form. 
 It can be written as a difference of two positive closed 
 {smooth forms}
 $T_1, T_2$ with  bounded mass. 
By the definition of 
the $*$-norm, 
for every ${ \kappa}>0$ there exist positive closed currents $S_1$ and $S_2$ such that $S=S_1-S_2$ and $\Vert S_1+S_2\Vert <\Vert S\Vert _*+{\kappa}$.
By \cite[Theorem 2.4.4]{DS10JAG},
there exist smooth forms 
$S_{i,\varepsilon}$
such that
$S_\varepsilon := S_{1,\varepsilon}- S_{2,\varepsilon}\to S$ and
$\|S_{1,\varepsilon}+S_{2,\varepsilon}\|\to \|S_1+S_2\|$ as $\varepsilon\to 0$.
By definition of the $\bigstar$-norm,
 there exist positive 
 (non necessarily closed)
  currents ${Q}_1$ and ${Q}_2$ such that 
${Q}={Q}_1-{Q}_2$ and 
$\sup_{\Phi\in\mathscr{H}_{ {s}-1}}\langle {Q}_1+{ Q}_2,\Phi\rangle\leq\Vert{ Q}\Vert ^{\bigstar}+{\kappa}$. Therefore,
\begin{align*}
\Vert \widehat{{Q}}_S\Vert _*&=\Vert \widehat{{ Q}}\wedge\tau^*\Pi_2^*(S)\Vert _*=\Vert T\wedge\tau^*\Pi_1^*({Q})\wedge \tau^*\Pi_2^*(S)\Vert_*\\& 
 =\Vert (T_1-T_2)\wedge\tau^*\Pi_1^*({Q}_1-{Q}_2)\wedge\tau^*\Pi_2^*(S_1-S_2)\Vert_*\\&
 \leq \left\langle\tau^*\Pi_1^*({Q}_1+{Q}_2), (T_1+T_2)\wedge\tau^*\Pi_2^*(S_1+S_2)\right\rangle\\&
=\left\langle({Q}_1+{Q}_2), (\Pi_1)_*\tau_*((T_1+T_2)\wedge\tau^*\Pi_2^*(S_1+S_2))\right\rangle
= \left\langle {Q}_1+{Q}_2, \Omega\right\rangle,
\end{align*}
where we set
\[\Omega := (\Pi_1)_*\tau_*\big((T_1+T_2)\wedge\tau^*\Pi_2^*(S_1+S_2)\big).\]
Since $T_1, T_2, S_1, S_2$ are positive and closed,
also
$\Omega$ is positive and closed. So,
 its mass can be computed cohomologically, namely
\begin{align*}
    \Vert \Omega\Vert&=\Vert (\Pi_1)_*\tau_*(T_1+T_2)\wedge(\Pi_1)_*\Pi_2^*(S_1+S_2))\Vert\\&
    \lesssim\Vert (\Pi_1)_*\tau_*(T_1+T_2)\Vert \cdot \Vert (\Pi_1)_*\Pi_2^*(S_1+S_2))\Vert\\&
\lesssim\Vert T_1+T_2\Vert \cdot \Vert S_1+S_2\Vert \lesssim \Vert S\Vert _*+{ \kappa},
\end{align*}
where the implicit constant depends only on $T$ and is independent of $S$.
{Up to replacing $S_i$ with $S_{i, \epsilon}$
above,
and since
the map $S\mapsto dd^c_\theta\, U_S({Q}_\theta)$ depends continuously on $S$,
we can also assume that $\Omega$ is smooth. Therefore, we have}
\begin{align*}
\Vert \widehat{{ Q}}_S\Vert _*&\leq\langle {Q}_1+{Q}_2,\Omega\rangle=\Vert \Omega\Vert 
\cdot
\langle {Q}_1+{Q}_2,\Omega/\Vert \Omega\Vert \rangle\\&
\leq\Vert \Omega\Vert \sup_{\Phi\in\mathscr{H}_{ {s}-1}}\langle {Q}_1+{ Q}_2,\Phi\rangle
\lesssim (\Vert S\Vert_*+{\kappa})(\Vert {Q}\Vert ^{\bigstar}+{ \kappa}).
\end{align*}
Letting ${\kappa}\to 0$ we obtain the desired inequality.
\end{proof} 

We briefly recall the construction of
a \emph{Green potential}
for $dd^c$-exact currents.
Let $\Delta\subset X\times X$ be the diagonal.
There exists a smooth real $(k,k)$-form $\alpha(x,y)$ cohomologous to $[\Delta]$ with $d_x\alpha=d_y\alpha=0$.
By \cite{BostGilletSoule,GilletSoule}, one can construct a $(k-1,k-1)$-form $K(x,y)$ on $X\times X$ such that
$dd^c K=[\Delta]-\alpha$
and satisfying
\begin{equation}\label{eq:K-log}
K(x,y)=O\big(|x-y|^{2-2k}\log|x-y|\big)
\end{equation}
near the diagonal.
For
 ${Q}\in\mathscr{D}_{k-{ s}+1}^0(X)$, we define
\[
U_{Q}(x):=\int_{y\in X}K(x,y)\wedge {Q}(y).
\]
 A direct computation shows that $dd^c U_{Q}={Q}$.
We refer to $U_{Q}$ as the Green potential of ${Q}$.
We also have the following result concerning the $L^{1}$-norm of
the Green potential
 $U_{Q}$. 
The version of this statement
with $\|{ Q}\|_*$ instead of $\|{ Q}\|^\bigstar$ 
can be found in \cite[Proposition~2.1]{DS05} and \cite[Proposition 2.4.7]{DS10JAG}. 
For
 convenience, we verify below that the estimates 
 remain
  valid  when the
   $\bigstar$-norm is used.

\begin{proposition}\label{prop:L_1-potential}
Take ${Q}\in\mathscr{D}_{k-{ s}+1}^0(X)$ and let
$U_{Q}$
be the
Green potential
 of ${Q}$.
Then there exists a constant
$c>0$, independent of ${Q}$, such that
 $$\Vert U_{Q}\Vert_{L^1}\leq c\Vert {Q}\Vert^{\bigstar}.$$  
\end{proposition}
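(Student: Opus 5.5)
The plan is to bound $\|U_Q\|_{L^1}$ by duality, testing $U_Q$ against bounded forms and then moving the Green kernel onto the test form. Since $U_Q$ is a $(k-s,k-s)$-form with $L^1$ coefficients, $\|U_Q\|_{L^1}$ is comparable to $\sup|\langle U_Q,\Psi\rangle|$, where $\Psi$ ranges over continuous (or smooth) $(s,s)$-forms with $\|\Psi\|_\infty\le1$. For such $\Psi$, Fubini gives
\[
\langle U_Q,\Psi\rangle=\int_{x}\int_y K(x,y)\wedge Q(y)\wedge\Psi(x)=\langle Q, K_*\Psi\rangle,
\qquad K_*\Psi(y):=\int_x K(x,y)\wedge\Psi(x).
\]
Here $K_*\Psi$ is an $(s-1,s-1)$-form in $y$. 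By the estimate \eqref{eq:K-log}, the kernel is integrable with singularity $|x-y|^{2-2k}\log$, which lies in $L^1$ in dimension $2k$. Therefore $K_*\Psi$ is continuous, and $\|K_*\Psi\|_\infty\le c$ uniformly in $\Psi$. We may also reduce to smooth $Q$ using the density statement of Corollary \ref{cor:density-resp-bigstar-norm}, or just note that the argument is linear in $Q$.

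Next, write $Q=Q_1-Q_2$ with $Q_i$ positive and $\sup_{\Phi\in\mathscr H_{s-1}}(\|Q_1\wedge\Phi\|+\|Q_2\wedge\Phi\|)\le\|Q\|^\bigstar+\kappa$. The key step, and the main obstacle, is that $K_*\Psi$ is bounded but neither positive nor closed, whereas the $\bigstar$-norm only controls pairing against elements of $\mathscr H_{s-1}$, which are smooth positive \emph{closed} forms. A crude bound $|K_*\Psi|\le c\,\omega^{s-1}$ gives $|\langle Q_i,K_*\Psi\rangle|\le c\langle Q_i,\omega^{s-1}\rangle$. Since $\omega^{s-1}/\|\omega^{s-1}\|\in\mathscr H_{s-1}$ (it is smooth, positive and closed), this yields
\[
|\langle Q,K_*\Psi\rangle|\le c\big(\langle Q_1,\omega^{s-1}\rangle+\langle Q_2,\omega^{s-1}\rangle\big)\lesssim \|Q\|^\bigstar+\kappa .
\]
So the difficulty dissolves once one checks that the pointwise domination $-c\,\omega^{s-1}\le K_*\Psi\le c\,\omega^{s-1}$ holds in the sense needed to pair against the positive currents $Q_i$. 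For a real continuous $(s-1,s-1)$-form $\beta$ with $\|\beta\|_\infty\le1$, one has $|\langle Q_i,\beta\rangle|\lesssim\langle Q_i,\omega^{s-1}\rangle$ because positive currents have measure coefficients dominated by their trace measure. This step uses no closedness at all. If one prefers, $\Psi$ can be split into real and imaginary parts first.

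Finally, take the supremum over $\Psi$ and let $\kappa\to0$, which gives $\|U_Q\|_{L^1}\le c\|Q\|^\bigstar$. I expect the only genuinely delicate point to be justifying the Fubini exchange and the continuity of $K_*\Psi$ when $Q$ is merely a current. I would handle this by proving the estimate first for smooth $Q$, where everything is classical (as in \cite[Proposition 2.4.7]{DS10JAG}). For general $Q$, I would argue directly: $K_*\Psi$ is a continuous form, so pairing it with the order-zero current $Q$ is legitimate, and this pairing defines $\langle U_Q,\Psi\rangle$ consistently with the definition of $U_Q$.
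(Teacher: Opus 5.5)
Your argument is correct and is essentially the paper's proof written in dual form. Both take a near-optimal decomposition $Q=Q_1-Q_2$, use $\omega^{s-1}\in\mathscr H_{s-1}$ to get $\|Q_1\|+\|Q_2\|\le\|Q\|^{\bigstar}+\kappa$, and then invoke the uniform integrability of the Green kernel. The paper skips the pairing with $\Psi$ and bounds $\|U_i\|_{L^1}\le c\|Q_i\|$ directly from $\sup_y\|K(\cdot,y)\|_{L^1}\le c$, which is exactly the estimate behind your $\|K_*\Psi\|_\infty\le c$.
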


\begin{proof} 
Let ${Q}={Q}_1-{Q}_2$ be a decomposition realizing $\|{Q}\|^{\bigstar}$ up to $\kappa$,
that is,
$\sup_{\Phi\in\mathscr{H}_{{ s}-1}}\langle {Q}_1+{ Q}_2,\Phi\rangle
\le
\|{Q}\|^{\bigstar}+\kappa$.
Since $\omega^{{s}-1}\in \mathscr{H}_{{s}-1}$,
this implies
$\|{Q}_1\|+\|{Q}_2\|
\le
\|{ Q}\|^{\bigstar}+\kappa$.

From 
\eqref{eq:K-log}, we have
$\|K(\cdot,y)\|_{L^1}\le c$
for some constant $c>0$ independent of $y$.
Writing $U_{Q}=U_1-U_2$ with
\[
U_i(x):=\int_{y\in X}K(x,y)\wedge {Q}_i(y),
\]
we obtain
$\|U_i\|_{L^1}\le c\|{Q}_i\|$
for
$i=1,2$.
Therefore,
\[
\|U_{Q}\|_{L^1}
\le
c(\|{Q}_1\|+\|{Q}_2\|)
\le
c (\|{Q}\|^{\bigstar}+\kappa).
\]
Letting ${\kappa}\to0$ gives the result.
\end{proof}

\subsection{Proof of Proposition \ref{prop:U_S-estimates}}

We follow the same strategy as
in the proof of \cite[Proposition~2.1]{DS10}. 
Multiplying $S$ by a constant  allows us to assume $\|S\|_*\leq c^{-k-3}$, where $c$ is
the maximum of the constants given
by Proposition \ref{prop:property-transform}, 
Lemma \ref{lem:L-infty}, Proposition \ref{prop:star-bigstar-ddc},  and Proposition \ref{prop:L_1-potential}.
We define  inductively
\begin{align*}
    S_0:=S \quad \text{and} \quad S_{i+1}:=\mathcal{L}_{\infty}(S_i)
\quad \text{for}\quad 0\leq i\leq k+1.
\end{align*}
Define also
\begin{align*}
u_i(\theta):=\mathscr{U}_{\mathcal{L}_{\theta}(S_i)}({ Q})\quad \text{and} \quad m_i:=u_i(0)=u_{i-1}(\infty).
\end{align*}
Proposition \ref{prop:property-transform} (5)
implies that $\|S_i\|_*\leq 1/c$. 

When ${Q}$ is smooth,
we have $\mathscr{U}_{\mathcal{L}_{\theta}(S)}({ Q})=\mathscr{U}_S(\mathcal{L}_{\theta}({Q}))$,
see \cite[Lemma 3.2.5]{DS10JAG}. For such ${Q}$,  applying  Proposition \ref{prop:star-bigstar-ddc}  we obtain $$\|dd^cu_i\|_*=\Vert dd^c_{\theta}\mathscr{U}_{\mathcal{L}_{\theta}(S_{i})}({ Q})\Vert_*\leq c\Vert S_{i}\Vert_*\Vert { Q}\Vert^{\bigstar}\leq 1. $$ 
For a
general ${Q}$, we use an approximation.
Indeed, by Corollary \ref{cor:density-resp-bigstar-norm} smooth currents are dense in $\mathscr{D}_{k-{s}+1}^0$  with respect to $\bigstar$-topology.
Thus, the above 
extends by continuity to any
${Q}$ with a continuous super-potential, as in the statement.

It follows from Lemma
\ref{lem:L-infty} that
$
\|S_{k+2}\|_{L^{\infty}}\leq 1$. More precisely, we have
\begin{align*}
\|S_{k+2}\|_{L^{\infty}}&=\|\mathcal{L}_{\infty}(S_{k+1})\|_{L^{\infty}} & \\
& \leq c\|S_{k+1}\|_{L^{k+1}} & \text{by Lemma \ref{lem:L-infty} (3)}\\
& =c\|\mathcal{L}_{\infty}(S_{k})\|_{L^{k+1}}=c\|\mathcal{L}_{\infty}(S_{k})\|_{L^{q_{k}}}\\
&
\leq c^2\|S_{k}\|_{L^{q_{k-1}}}
& {\text{by Lemma \ref{lem:L-infty} (2)}}\\
& =c^2\|\mathcal{L}_{\infty}(S_{k-1})\|_{L^{q_{k-1}}}\\
& \leq c^3\|S_{k-1}\|_{L^{q_{k-2}}}
 \leq \dots \leq c^k\|\mathcal{L}_{\infty}(S_{ 1})\|_{L^{q_1}}
 & {\text{by Lemma \ref{lem:L-infty} (2)}}\\
 & \leq c^{k+1}\|S_{1}\|_*=c^{k+1}\|\mathcal{L}_{\infty}(S)\|_*  & {\text{by Proposition \ref{prop:property-transform} (5)}}\\
 & \leq c^{k+2}\|S\|_*\leq 1/c<1,
\end{align*}
where in the last step we applied the assumption
$\|S\|_*\leq c^{-k-3}$. 
By Proposition \ref{prop:L_1-potential}, ${Q}$ admits a Green potential $U_{Q}$ with
$\Vert U_{Q}\Vert_{L^1}\leq c$.
By definition, we have $m_{k+2}=\langle S_{k+2},U_{Q}\rangle$.  Thus, we have $|m_{k+2}|\leq c$.

We need to show that $\left|m_{0}\right| \leq A\left(1+\gamma^{-1} \log ^{+} M\right)$ for some constant $A>0$. For this purpose, we can assume that $M>1$ and it is enough to check that $|m_{i}- m_{i+1}| \leq A\left(1+\gamma^{-1} \log M\right)$ for some constant $A>0$. We have $m_{i}-m_{i+1}=v_{i}(0)$ where $v_{i}:=u_{i}-m_{i+1}$. The above properties of $u_{i}$ imply that 
the $v_{i}$'s
are continuous, vanish outside the unit disc, and satisfy $\left\|dd^c v_{i}\right\|_* \leq 1$. By
classical Skoda-type estimates (see, for instance, \cite{Sko72, DS10JAG}),
we have $\left\|e^{\lambda\left|v_{i}\right|}\right\|_{L^{1}\left(\mathbb{P}^{1}\right)} \leq \beta$ for some universal constants $\beta>0$ and $\lambda>0$. We then deduce that there exists 
$\theta$ satisfying $|\theta| \leq M^{-1 / \gamma}$ and $\left|v_{i}(\theta)\right| \leq(A-1)+A \gamma^{-1} \log M$ for a fixed constant $A$ large enough. Finally, using the H\"older continuity of $\mathscr{U}_{{Q}}$ and Proposition \ref{prop:property-transform} { (6)}, we get
$$
\begin{aligned}
\left|v_{i}(0)-v_{i}(\theta)\right| & =\left|\mathscr{U}_{S_{i}}({Q})-\mathscr{U}_{\mathcal{L}_{\theta}\left(S_{i}\right)}({ Q})\right|=\left|\mathscr{U}_{{Q}}\left(S_{i}\right)-\mathscr{U}_{{Q}}\left(\mathcal{L}_{\theta}\left(S_{i}\right)\right)\right| \\
& \leq M \operatorname{dist}_{2}\left(S_{i}, \mathcal{L}_{\theta}\left(S_{i}\right)\right)^{\gamma} \leq M|\theta|^{\gamma} \leq 1.
\end{aligned}
$$
Therefore, $\left|v_{i}(0)\right| \leq A\left(1+\gamma^{-1} \log M\right)$. This completes the proof of Proposition \ref{prop:U_S-estimates}.

\subsection{Proof of Proposition \ref{prop: Sp-convergence}} 
\label{as:proof-second}

We will deduce 
Proposition~\ref{prop: Sp-convergence} from the 
following version of
\cite[Proposition 4.2.2]{DS10JAG}
for the $\|\cdot\|^\bigstar$ norm.

\begin{proposition}\label{prop: Sp-convergence-1} Let $f:X\to X$ be a holomorphic automorphism of
 $(X,\omega)$
 with simple action on cohomology. 
Take $S\in \mathscr{D}_{p}$ with a continuous super-potential with respect to the $\bigstar$-topology.  Assume that $d_{p}^{-n}\left(f^{n}\right)^{*}\{S\}\to \underline{c}\in H^{p, p}(X, \mathbb{R})$. Then $S_n:=d_{p}^{-n}\left(f^{n}\right)^{*}(S)$ converges $SP^{\bigstar}$-uniformly to a current $T_{\underline{c}}\in\mathscr{D}_{p}$ which depends only on $\underline{c}$.
Moreover, 
for every $\delta_f < \delta<d_p$
we have
\[|\mathscr{U}_{S_n}({Q})-\mathscr{U}_{T_{\underline{c}}}({Q})|
\lesssim \left(\frac{\delta}{d_p}\right)^nM_S\,\|{ Q}\|^{\bigstar}
\qquad
\text{for every }  
{Q} \in  \mathscr{D}_{k-p+1}^0,
\]
where $M_S = \max_{\|\Omega\|^{\bigstar} \leq 1} |\mathscr{U}_S(\Omega)|$ and the implicit constant is independent of $S, \underline{c},$ and $Q$.

 \end{proposition}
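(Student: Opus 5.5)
We follow the strategy of \cite[Proposition 4.2.2]{DS10JAG}, replacing $\|\cdot\|_*$ by $\|\cdot\|^\bigstar$ on the side of the test currents $Q$. The starting point is the \emph{cohomological splitting} of the pull-back. Fix $\delta_f<\delta<d_p$, and choose $\widetilde\delta$ with $\delta_f<\widetilde\delta<\delta$. Decompose $H^{p,p}(X,\R)=E\oplus F$, where $E$ is the $d_p$-eigenline of $f^*$ and $F$ is the invariant complement, on which $\|(f^n)^*|_F\|\lesssim\widetilde\delta^n$.

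\textbf{Step 1: the transfer identity.} For smooth $Q\in\mathscr D^0_{k-p+1}$ and any $S$ (first smooth, then by $\bigstar$-continuity), we aim for a formula of the form
\[
\mathscr U_{f^*S}(Q)=\mathscr U_S(f_*Q)+\langle \Lambda(\{S\}),\, Q\rangle_{\rm coh},
\]
where the correction comes from renormalizing the potential. Precisely: if $U$ is an $\alpha_p$-normalized potential of $Q$, then $f_*U$ is a potential of $f_*Q$, and $f_*U-\sum_j\langle f_*U,\alpha_{p,j}\rangle\check\alpha_{p,j}$ is normalized. This gives
\[
\mathscr U_{f^*S}(Q)=\mathscr U_S(f_*Q)+\sum_j \langle U,f^*\alpha_{p,j}\rangle\langle S,\check\alpha_{p,j}\rangle .
\]
Since $U$ is normalized, $\langle U,f^*\alpha_{p,j}\rangle=\mathscr U_{f^*\alpha_{p,j}-\sum_i a_{ij}\alpha_{p,i}}(Q)$ is the super-potential of a fixed smooth exact form. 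By Proposition~\ref{prop:L_1-potential}, applied to the Green potential with the normalization correction bounded by $\|Q\|$, this term is $O(\|Q\|^\bigstar)$ and depends linearly on $\{S\}$.

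\textbf{Step 2: iteration and telescoping.} Iterating Step 1 with $S_n=d_p^{-n}(f^n)^*S$ yields
\[
\mathscr U_{S_n}(Q)=d_p^{-n}\mathscr U_S\big((f^n)_*Q\big)+\sum_{m=0}^{n-1} d_p^{-m-1}\,\Psi\big(\{(f^{n-m-1})^* S\}\big)\big((f^{m})_*Q\big),
\]
where $\Psi(\cdot)(\cdot)$ is the bilinear error term of Step 1. By Lemma~\ref{lem:growth} (applied with $f^{-1}$, bidegree $k-p+1$), we have $\|(f^m)_*Q\|^\bigstar\le C\widetilde\delta^m\|Q\|^\bigstar$. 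The first term is therefore bounded by $M_S C(\widetilde\delta/d_p)^n\|Q\|^\bigstar$, using the definition of $M_S$ and linearity.

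For the sum, split $\{S\}$ along $E\oplus F$. The $E$-part contributes $d_p^{-m-1}d_p^{n-m-1}\cdot\widetilde\delta^m$ after normalization by $d_p^{-n}$, that is, $\sim(\widetilde\delta/d_p)^m$. This gives a convergent series whose limit depends only on $\underline c$, and its tail after $n$ terms is $O((\widetilde\delta/d_p)^n)$. The $F$-part contributes $\lesssim\sum_m d_p^{-n}\widetilde\delta^{n-m}\widetilde\delta^m=n(\widetilde\delta/d_p)^n\lesssim(\delta/d_p)^n$.

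Define $\mathscr U_{T_{\underline c}}(Q)$ as the limit series. Since $|\{S\}|\lesssim M_S$, all bounds are $\lesssim(\delta/d_p)^nM_S\|Q\|^\bigstar$. Uniformity on $\bigstar$-bounded sets is immediate. The limit $T_{\underline c}$ exists as a current, because super-potentials determine currents in $\mathscr D_p$ modulo cohomology, and the cohomology class converges to $\underline c$.

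\textbf{Main obstacle.} The delicate point is Step 1's error term. We must check that the super-potential of the fixed smooth exact form $f^*\alpha_{p,j}-\sum_i a_{ij}\alpha_{p,i}$ (equivalently, the normalization constants of $f_*U$) is bounded by $\|Q\|^\bigstar$ rather than $\|Q\|_*$. This is where Proposition~\ref{prop:L_1-potential} replaces the closed-current mass bound. The constants $\langle U,\alpha\rangle$ for smooth $\alpha$ are only controlled by $\|U\|_{L^1}$ up to the normalization. Here we would use the Green potential plus the finite-dimensional correction $\sum\langle U_Q,\alpha_{p,j}\rangle\check\alpha_{p,j}$, each bounded by $\|U_Q\|_{L^1}\lesssim\|Q\|^\bigstar$.

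Finally, Proposition~\ref{prop: Sp-convergence} follows by applying this with $S=\widetilde S$. Lemma~\ref{lem:decom_S=alpha+S'}(3) gives $M_{\widetilde S}\lesssim\|S\|_*\le1$.
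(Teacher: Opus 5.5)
Your argument is essentially the paper's proof. Steps 1--2 re-derive the identity \eqref{eq:summ_superpotential}, which the paper imports directly from \cite[Lemma 4.2.3]{DS10JAG}. The estimates that follow are also the paper's: Lemma \ref{lem:growth} for $f^{-1}$ in bidegree $k-p+1$, a $\bigstar$-bound on the super-potentials of the smooth forms $f^*\alpha_{p,j}$, and the spectral gap on $H^{p,p}(X,\R)$.

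Your bookkeeping is slightly cleaner in two places. First, you justify $|\mathscr U_{f^*\alpha_{p,j}}(Q)|\lesssim\|Q\|^\bigstar$ through the Green potential and Proposition \ref{prop:L_1-potential}, where the paper only invokes $\bigstar$-continuity. Second, your $E\oplus F$ splitting applies only nonnegative powers of $f^*|_F$ to the non-dominant part, which gives $n\widetilde\delta^{\,n-1}d_p^{-n}$ directly. The paper instead writes that term as $M^{-l-1}[d_p^{-n}M^nA-\underline c]$. Its termwise bound tacitly treats $M^{-l-1}$ as contracting like $d_p^{-l}$ on $F$, which it is not; only the sum comes out right. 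Also, in Step 2 the prefactor should be $d_p^{-n}$, not $d_p^{-m-1}$, if the class $\{(f^{n-m-1})^*S\}$ is not normalized.

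The one false step is your claim $|\{S\}|\lesssim M_S$. Take $S=\alpha_{p,1}$. Since super-potentials are $\alpha_p$-normalized, $\mathscr U_S(\Omega)=\langle\alpha_{p,1},U_\Omega\rangle=0$ for every $\Omega$, so $M_S=0$ while $\{S\}\neq0$. The displayed estimate would then force $\mathscr U_{S_1}=\mathscr U_{S_0}=0$. But $\mathscr U_{S_1}=d_p^{-1}\mathscr U_{f^*\alpha_{p,1}}$, which is nonzero unless $f^*\alpha_{p,1}$ is, as a form, a combination of the $\alpha_{p,i}$.

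What your argument actually proves is the estimate with $M_S+\|\{S\}\|$ in place of $M_S$, because the terms of the sum are linear in $\{S\}$, not in $\mathscr U_S$. The paper's proof makes the same substitution in \eqref{eq:Growth-matrix}, where $M_S$ stands for what is really $\|A\|$. So the statement should be read in this corrected form. The correction is harmless downstream: in Proposition \ref{prop: Sp-convergence}, Lemma \ref{lem:decom_S=alpha+S'} gives both $M_{\widetilde S}\lesssim\|S\|_*$ and $\|\{\widetilde S\}\|\lesssim\|S\|_*$.
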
 
 \begin{proof}
      As in Section \ref{subsection:superpotentials}, we
 let $\alpha_{p}=\left(\alpha_{p,1}, \ldots, \alpha_{p,h(p)}\right)$ be a family of smooth closed real ($p, p$)-forms such that $\{\alpha\}=\left\{\{\alpha_{p,1}\}, \ldots,\{\alpha_{p,h(p)}\right\}\}$ is a basis of $H^{p, p}(X, \mathbb{R})$,
  where $h(p)$ is the dimension of $H^{p, p}(X, \mathbb{R})$. We always consider
  super-potentials normalized by $\alpha_p$.
For simplicity, as $p$ is fixed, we will denote
$h=h(p)$ and
$\alpha = \alpha_p 
=(\alpha_1, \dots, \alpha_h)$
in what follows.
  Let $M$ denote the $h \times h$ matrix whose column of index $j$ is given by the coordinates of $f^{*}\{\alpha_{j}\}$ with respect to the basis $\{\alpha\}$. Let $\mathscr{U}_{j}$ denote the super-potential of $f^{*}\left(\alpha_{j}\right)$ and define $\mathscr{U}:=\left(\mathscr{U}_{1}, \ldots, \mathscr{U}_{h}\right)$. Let $A={ }^{t}\left(a_{1}, \ldots, a_{h}\right)$ denote the coordinates of $\{S\}$ in the basis $\{\alpha\}$ and $\mathscr{U}_{S}, \mathscr{U}_{S_{n}}$ the super-potentials of $S$ and of $S_{n}:=d_p^{-n}\left(f^{n}\right)^{*}S$, respectively.

We mainly
follow the arguments of \cite[Proposition~4.2.2]{DS10JAG}, 
adapting them to the use of the $\bigstar$-topology.
 By \cite[Proposition 4.1.1]{DS10JAG} the limit $\underline{c}$ of $ d_{p}^{-n}\left(f^{n}\right)\{S\}$ is a class in $\{T^+\}$.
  Write $\underline{c}={ }^{t}\left(c_{1}, \ldots, c_{h}\right)$ with respect to the basis $\{\alpha\}$. Then, $ d_{p}^{-n} M^{n} A$ converges to $\underline{c}$.
Following \cite[Lemma 4.2.3]{DS10JAG}, the super-potential $\mathscr{U}_{{S_n}}$ of $S_n$ is equal to
\begin{align}
\mathscr{U}_{{ S_n}} 
\label{eq:summ_superpotential}
& =\sum_{l=0}^{n-1}\left(\mathscr{U} \circ { \left(f^{l}\right)_{*}}\right) \frac{M^{n-l-1} A}{ d_{p}^{n}}+ d_{p}^{-n} \mathscr{U}_{S} \circ { \left(f^{n}\right)_{*}}. 
\end{align}

Since $\mathscr U_S$ is continuous on $\bigstar$-bounded sets, 
the constant 
 $\displaystyle M_{S}=\max_{\|\Omega\|^{\bigstar}\leq 1}|\mathscr{U}_{S}(\Omega)|$
is finite and depends only
on $S$.
 Using this
 and applying Lemma \ref{lem:growth} with $q=k-p+1{,R=Q}$,
and $f^{-1}$ instead of $f$ 
 we obtain 
\begin{equation}\label{eq:growth_super-potential}
\left|\mathscr{U}_{S}({\left(f^{n}\right)_{*}}({ Q}))\right| \leq {M_S}\left\|{\left(f^{n}\right)_{*}}({ Q})\right\|^{\bigstar} \lesssim \delta^{n}{ M_S\,}\|{ Q}\|^{\bigstar}, 
\end{equation}
for any $\delta>\delta_f$.
The implicit constant above is independent of $S$, $Q$,
and $n$.

Recall that $\left\|M^{n}\right\| \sim d_{p}^{n}$. Analogous estimates as in (\ref{eq:growth_super-potential}) for $\mathscr{U}_{j}$ imply that

\begin{equation}\label{eq:Growth-matrix}
\left\|\frac{M^{n-l-1} A}{ d_{p}^{n}}\right\| \lesssim d_{p}^{-l} \quad \text { and } \quad\left|\left(
 \mathscr U
\circ {\left(f^{l}\right)_{*}}\right) \frac{M^{n-l-1} A}{ d_{p}^{n}}\right| \lesssim {M_S\,} \delta^{l} d_{p}^{-l}.
\end{equation}

Since $\sum_{l \geq 0} \delta^{l} d_{p}^{-l}$ converges, we can apply the Lebesgue convergence theorem to
the sum in (\ref{eq:summ_superpotential}). We obtain the following uniform convergence on $\bigstar$-bounded sets:
$$
\mathscr{U}_{{S_n}}\to
\sum_{l = 0}^\infty\left(\mathscr{U} \circ {\left(f^{l}\right)_{*}}\right) M^{-l-1}\underline{c},
\qquad n\to \infty.
$$
The last series converges because (\ref{eq:Growth-matrix}) implies that $\left\|M^{-l-1} \underline{c}\right\| \lesssim d_{p}^{-l}$. 
Hence, the sequence ${S_n}$ converges to some current $T_{\underline{c}}$. Moreover, the last series defines a super-potential $\mathscr{U}_{T_{\underline{c}}}$ of $T_{\underline{c}}$ and $\mathscr{U}_{{S_n}}$ converge to $\mathscr{U}_{T_{\underline{c}}}$ uniformly
on ${Q}$ taken in a given 
$\bigstar$-bounded set.
Hence, the convergence $ {S_n}\to T_{\underline c}$
is $SP^{ \bigstar}$-uniform. Since $\mathscr{U}_{T_{\underline{c}}}$ depends only on the class $\underline{c}$, by \cite[Proposition 3.2.3]{DS10JAG}, $T_{\underline{c}}$ depends only on this class. 
More precisely, there exists 
a constant $\eta>0$ such that $T_{\underline{c}}=\eta T^+$.

To conclude, we need to show that the convergence above is exponentially fast, as in the
statement. Since $\delta>\delta_f$, by the Perron--Frobenius theorem, we have
\[
\|d_p^{-n}M^nA-\underline{c}\|\leq C\left(\frac{\delta}{d_p}\right)^n
\]
for some constant $C>0$ independent of $n$. Therefore, for every
$Q\in \mathscr{D}_{k-p+1}^0$ we have
\begin{align*}
|\mathscr{U}_{S_n}(Q)-\mathscr{U}_{T_{\underline{c}}}(Q)|
&\leq
\sum_{l=n}^{\infty}
\left|
\left(\mathscr{U}\circ (f^l)_*(Q)\right)M^{-l-1}\underline{c}
\right|
+d_p^{-n}\left|\mathscr{U}_{S}\big((f^{n})_*(Q)\big)\right| \\
&\quad +
\sum_{l=0}^{n-1}
\left|
\left(\mathscr{U}\circ (f^l)_*(Q)\right)
M^{-l-1}\big[d_p^{-n}M^nA-\underline{c}\big]
\right| \\
&\lesssim
\sum_{l=n}^{\infty}
\left(\frac{\delta}{d_p}\right)^l
M_S\|Q\|^{\bigstar}
+
\left(\frac{\delta}{d_p}\right)^n
M_S\|Q\|^{\bigstar} \\
&\quad +
\sum_{l=0}^{n-1}
\left(\frac{\delta}{d_p}\right)^l
\left(\frac{\delta}{d_p}\right)^n
M_S\|Q\|^{\bigstar}.
\end{align*}
Since $\delta<d_p$,
it follows that
\[
|\mathscr{U}_{S_n}(Q)-\mathscr{U}_{T_{\underline{c}}}(Q)|
\lesssim
\left(\frac{\delta}{d_p}\right)^n
M_S\|Q\|^{\bigstar},
\]
where the implicit constant is independent of $S$, $Q$, and $n$.
This completes the proof.
 \end{proof}

 We can now conclude the proof of Proposition
 \ref{prop: Sp-convergence}.

\begin{proof}[Proof of Proposition \ref{prop: Sp-convergence}]
Let $\widetilde S\in \mathscr D_p$ be the standard 
form associated with $S$ given by Lemma
\ref{lem:decom_S=alpha+S'}.
Since $\widetilde S$ is smooth, its super-potential $\mathscr U_{\widetilde S}$ is continuous with
respect to the $\bigstar$-topology. Lemma \ref{lem:decom_S=alpha+S'} also gives
\[
\{\widetilde S\}=\{S\}
\qquad\text{and}\qquad
M_{\widetilde S}:=\max_{\|\Omega\|^\bigstar\le 1}|\mathscr U_{\widetilde S}(\Omega)|
\lesssim \|S\|_* \le 1.
\]
Hence, all the assumptions 
on $S$
in Proposition
\ref{prop: Sp-convergence-1} 
are satisfied by $\widetilde S$.
Therefore,
the sequence
$\widetilde S_n:=d_p^{-n}(f^n)^*(\widetilde S)$
converges $SP^{\bigstar}$-uniformly to a current $T_{\underline c}\in \mathscr D_p$ depending only on the class $\underline c$.
Moreover, for every $\delta_f<\delta<d_p$,
by Proposition \ref{prop: Sp-convergence-1} 
we also  have
\[
|\mathscr U_{\widetilde S_n}(Q)-\mathscr U_{T_{\underline c}}(Q)|
\lesssim
\left(\frac{\delta}{d_p}\right)^n
M_{\widetilde S}\,\|Q\|^\bigstar
\lesssim
\left(\frac{\delta}{d_p}\right)^n \|Q\|^\bigstar
\]
for every $Q\in \mathscr D_{k-p+1}^0$,
where the implicit constants are independent of $S$, $Q$, and $n$.
This proves the assertion.
\end{proof}

\end{document}